\UseRawInputEncoding
\documentclass{amsart}
\usepackage{amssymb}
\usepackage{amsmath,amssymb,hyperref,mathrsfs,color}
\usepackage{paralist}
\usepackage{amssymb}
\usepackage{amssymb}
\usepackage{amsmath}
\usepackage{enumitem}
\usepackage[]{hyperref}
\usepackage{fancyhdr}
\pagestyle{fancy}
\fancyhf{}
\fancyhead[LE,RO]{\thepage}
\fancyhead[LO]{\myauthor}
\fancyhead[RE]{\mytitle}
\usepackage{calc}
 \newtheorem{The}{Theorem}[section]
 \newtheorem{Cor}[The]{Corollary}
 \newtheorem{Lem}[The]{Lemma}
 \newtheorem{Pro}[The]{Proposition}
 \theoremstyle{definition}
 \newtheorem{defn}[The]{Definition}
 \theoremstyle{remark}
 \newtheorem{Rem}[The]{Remark}
 \newtheorem{ex}{Example}
 \numberwithin{equation}{section}

\allowdisplaybreaks[4]
\begin{document}
\newcommand{\mytitle}{Metric viscosity solutions and distance-like functions on the Wasserstein space}
\newcommand{\myauthor}{Huajian Jiang \and Xiaojun Cui}
\title{Metric viscosity solutions and distance-like functions on the Wasserstein space}
\author{Huajian Jiang \and Xiaojun Cui}
\address{School of Mathematics, Nanjing University,
Nanjing 210093, China}
\email{huajianjiang@smail.nju.edu.cn}
\address{School of Mathematics, Nanjing University,
Nanjing 210093, China}
\email{xcui@nju.edu.cn}
\subjclass[2010]{58E10, 60B10, 49L25}
\date{\today}
\keywords{Wasserstein space, Eikonal equation, Distance-like function, Viscosity solution}
\thanks{Xiaojun Cui and Huajian Jiang are supported by the National Natural Science Foundation of China (Grant No. 12171234), the Project Funded by the Priority Academic Program Development of Jiangsu Higher Education Institutions (PAPD) and the Fundamental Research Funds for the Central Universities}
\maketitle

\begin{abstract}
Viscosity solutions to the eikonal equation $|\triangledown u|_g=1$, known to be exactly distance-like functions, on a non-compact complete Riemannian manifold $(\mathcal{M},g)$ are crucial for understanding the underlying geometric and topological properties. In this work, we explore metric viscosity solutions, distance-like functions and their relationship on a metric space, especially on the Wasserstein space $\mathcal{P}_{p}(\mathcal{X})$ where $\mathcal{X}$ is a complete, separable, locally compact and non-compact geodesic space. Meanwhile, we provide two distinct ways to construct (strong) metric viscosity solutions on $\mathcal{P}_{p}(\mathcal{X})$ and study their properties.
\end{abstract}

\tableofcontents

\newpage

\section{Introduction}
On a non-compact complete Riemannian manifold $(\mathcal{M},g)$, viscosity solutions to the eikonal equation
$|\triangledown u|_g=1$
are fundamental in studying the geometry, topology and dynamics of geodesic flows.
Distance-like function, abbreviated as dl-function, represents the distance to the infinity as defined in Definitions \ref{defn1} and \ref{defn2}. The two definitions are consistent on the Riemannian manifold and have been widely investigated in the literature, as exemplified by the research in \cite{HW}.
The 1-Lipschitz property of a dl-function guarantees it is a viscosity subsolution of the eikonal equation $|\triangledown u|_g=1$. The existence of a negative gradient curve (Definition \ref{NGC}) for a dl-function implies it acts as a viscosity supersolution of the eikonal equation.
In conclusion, a function is a dl-function if and only if it is a viscosity solution of the eikonal equation on such a manifold, as demonstrated in \cite{CX1,CX2,Liang}.

However, the classical theory of viscosity solution relies heavily on the differential structure, hindering its extension to general metric spaces. Fortunately, various kinds of metric viscosity solutions have been provided in \cite{LA2,YG,LQ,12,13,HQ} on metric spaces, and these metric viscosity solutions only depend on the metric structure. For their concrete definitions and relationships among them, refer to \cite{QZ}.
Let $(\mathcal{Y},d)$ be a metric space and $\Omega \subset \mathcal{Y}$ be an open set, the so-called eikonal equation in \cite{YG,LQ} is
\begin{equation*}
|\triangledown u|(y)=f(y) \ \text{in} \ \Omega.
\end{equation*}
Here $f:\Omega \rightarrow [0,\infty)$ is a given continuous function satisfying $\inf_{\Omega}f>0$ and $|\triangledown u|(y):=\limsup_{x \rightarrow y}\frac{|u(x)-u(y)|}{d(x,y)}$ stands for a generalized notion of the gradient norm of $u$ in metric spaces.
To highlight the connection between metric viscosity solutions and the metric, we only consider the eikonal equation of the simplest type as follows:
\begin{equation}\label{eikonal}
|\triangledown u|(y)=1 \ \text{in} \ \Omega.
\end{equation}

We first recall the concept of the local slope of a function.
In \cite{LA1}, let $(\mathcal{Y},d)$ be a length space (A length space is a metric space where for any given two points the distance between them is equal to the infimum of the lengths of all rectifiable curves joining them) and $\Omega \subset \mathcal{Y}$ be an open set, the local slope of a function $u: \mathcal{Y} \rightarrow \mathbb{R}$ is defined as
\begin{equation*}
|\partial u|(y):=\limsup_{x \in \Omega, x \rightarrow y} \frac{(u(y)-u(x))^{+}}{d(y,x)}.
\end{equation*}

\begin{defn}[Metric Viscosity Solution]
Let $(\mathcal{Y},d)$ be a length space and $\Omega \subset \mathcal{Y}$ be an open set. A locally Lipschitz function $u:\Omega \rightarrow \mathbb{R}$ is called a metric viscosity solution of \eqref{eikonal} if for any $y \in \Omega$
\begin{equation}
|\partial u|(y)=1.
\end{equation}
\end{defn}
For convenience, throughout this paper we use ``metric viscosity solution'' instead of ``metric viscosity solution of \eqref{eikonal}''. The metric viscosity solution is also called Monge solution of \eqref{eikonal} in \cite{LQ,AD,RT}.

In this work, we are mainly concerned with  the metric viscosity solutions and the dl-functions on the Wasserstein space. The Wasserstein space is of fundamental importance in optimal transport theory. Over the past three decades, optimal transport theory has flourished due to its deep connections with geometry, analysis, probability, and various other fields in mathematics (see, e.g., \cite{CV}). It has played a crucial role in areas such as geometric inference \cite{JD,FC}, generative adversarial networks \cite{MA2}, clustering analysis \cite{LM}, mean-field games \cite{RC}, machine learning \cite{MA1}, option pricing \cite{BJ}, stochastic portfolio theory \cite{EE}, and so on.
The Wasserstein space provides a rigorous framework for optimal transport theory.
For a Polish space $(\mathcal{X},d)$, the Wasserstein space $\mathcal{P}_{p}(\mathcal{X})$, comprising Borel probability measures on $\mathcal{X}$ with finite $p$-moment, is itself Polish \cite{CV}. Moreover, if $\mathcal{X}$ is a length (resp. geodesic) space, then $\mathcal{P}_{p}(\mathcal{X})$ is also a length (resp. geodesic) space \cite{SL}. The geometric richness of $\mathcal{P}_{p}(\mathcal{X})$ has found wide applications across diverse fields \cite{LA1,CV}.
Hence, a detailed investigation of the geometric structure of $\mathcal{P}_{p}(\mathcal{X})$ and its implications is highly important.

Notable progress has been made in the study of dl-functions, especially in the Wasserstein space. For instance, the following particular kinds of dl-functions should be mentioned. The Busemann functions on the Wasserstein space have been presented in \cite{ZG1}, and the horo-functions associated with atom probability measures have been explored in \cite{ZG2}. In the following parts, we will provide the definitions of two kind of dl-functions on metric spaces.

First, we adopt the definition of the dl-function in \cite{CX2} for an unbounded metric space, denoted as dl$_{C}$-function.
\begin{defn}[Dl$_{C}$-function]\label{defn1}
On an unbounded metric space $(\mathcal{Y},d)$, consider a sequence $\{H_{n}\}$ of non-empty closed subsets of $\mathcal{Y}$ such that, for a given (hence, for any) $y_{0} \in \mathcal{Y},$ the distance between $y_0$ and $H_n$ diverges to infinity as $n$ goes to infinity.
A function $u: \mathcal{Y} \rightarrow \mathbb{R}$ is called a dl$_{C}$-function if there exists a sequence $\{c_{n}\} \subset \mathbb{R}$ such that
$$u(y)=\lim_{n \rightarrow \infty}[d(y,H_{n})-c_{n}] \ \text{for all} \ y \in \mathcal{Y}.$$
\end{defn}

Another definition of the dl-function, introduced by Gromov \cite{MG1}, is named as dl$_{G}$-function.
\begin{defn}[Dl$_{G}$-function]\label{defn2}
On an unbounded metric space $(\mathcal{Y},d)$, a function $u: \mathcal{Y} \rightarrow \mathbb{R}$ is called a dl$_{G}$-function if for each $y \in \mathcal{Y}$,
\begin{equation*}
u(y)=c+d(y,u^{-1}(-\infty,c]) \ \text{for all} \ c<u(y).
\end{equation*}
\end{defn}

\begin{Rem}
Here, we adopt the convention that $d(y,H)=\infty$  if $H$ is an empty set. It is clear that the dl$_{C}$-function and dl$_{G}$-function are both  1-Lipschitz. On a complete, separable, locally compact and non-compact geodesic space, a function is a dl$_{C}$-function if and only if it is a dl$_{G}$-function. On a general unbounded metric space, dl$_{G}$-functions are necessarily dl$_{C}$-functions (see Corollary \ref{DLG}). Conversely, this is not always true, as illustrated in Example \ref{ex3}.
\end{Rem}

In the following theorem, we provide several equivalent characterizations of metric viscosity solutions on a complete unbounded length space, and thus reveal their relationship with the dl$_{G}$-functions.

\begin{The}\label{VS}
Let $(\mathcal{Y},d)$ be a complete unbounded length space and $u: \mathcal{Y} \rightarrow \mathbb{R}$ be a continuous function. Then the following several statements are equivalent:
\begin{itemize}
\item[(i)]{$u$ is a metric viscosity solution;}
\item[(ii)]{$u$ is a dl$_{G}$-function on $\mathcal{Y}$.}
\item[(iii)]{For any $y \in \mathcal{Y}$ and $r>0$,
\begin{equation*}
u(y)=\inf_{x \in \partial B_{r}(y)} \{u(x)+d(x,y)\}.
\end{equation*}
where $\partial B_{r}(y)$ is the sphere centered at $y$ with radius $r$.}
\end{itemize}
\end{The}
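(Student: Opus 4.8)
The plan is to establish the three equivalences by first recording that each of (i)--(iii) forces $u$ to be $1$-Lipschitz, and then proving a cycle of implications in which (i)$\Rightarrow$(iii) is the only substantial step. Under (iii), every $x$ lies on $\partial B_{d(x,y)}(y)$, so the stated identity gives $u(y)\le u(x)+d(x,y)$, and by symmetry $|u(x)-u(y)|\le d(x,y)$; under (ii) the Remark already records $1$-Lipschitzness; under (i) I would invoke the standard fact that on a length space a continuous function with $|\partial u|\le 1$ everywhere is $1$-Lipschitz (one bounds the one-sided Dini derivatives of $t\mapsto u(\gamma(t))$ along near-geodesics $\gamma$ and integrates). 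I also record that in an unbounded length space every sphere $\partial B_r(y)$ is non-empty, since a path from $y$ to any point with distance exceeding $r$ meets $\{d(\cdot,y)=r\}$ by the intermediate value theorem.

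Next I would dispatch the easy implications. For (iii)$\Rightarrow$(i): on $\partial B_r(y)$ one has $d(x,y)=r$, so (iii) reads $\inf_{\partial B_r(y)}u=u(y)-r$; $1$-Lipschitzness gives $|\partial u|(y)\le 1$, while choosing $x_r\in\partial B_r(y)$ with $u(x_r)<u(y)-r+r^2$ gives $(u(y)-u(x_r))/d(x_r,y)>1-r\to 1$, so $|\partial u|(y)=1$. For (iii)$\Leftrightarrow$(ii) I would pass through the sublevel sets $S_c=\{u\le c\}$ and the identity $d(y,S_c)=u(y)-c$ for $c<u(y)$, whose inequality $d(y,S_c)\ge u(y)-c$ is immediate from $1$-Lipschitzness. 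Given (iii), applying it at radius $r=u(y)-c+\eta$ produces a point at distance $r$ with $u$-value below $c$, whence $d(y,S_c)\le u(y)-c+\eta$; let $\eta\downarrow 0$. Conversely, given (ii), take $x_n\in S_{u(y)-r}$ with $d(y,x_n)\to r$, join $y$ to $x_n$ by a path of length $\le d(y,x_n)+1/n$, and let $z_n$ be the first point on it at distance exactly $r$ from $y$; then $d(z_n,x_n)\to 0$, so $u(z_n)\to u(y)-r$ with $z_n\in\partial B_r(y)$, giving $\inf_{\partial B_r(y)}u\le u(y)-r$ and hence (iii).

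The heart of the argument is (i)$\Rightarrow$(iii), equivalently the reachability estimate $\inf_{\partial B_r(y)}u\le u(y)-r$. Fix $y$, $r>0$ and $\delta\in(0,1)$, and consider on the complete metric space $\bar B_r(y)$ the functional
\[
\Phi(x)=u(x)+(1-\delta)\,d(x,y),
\]
which is continuous and bounded below (by $u(y)-r$, using $1$-Lipschitzness). For small $\epsilon>0$, Ekeland's variational principle (with parameter $\sqrt\epsilon$) furnishes $x_\ast\in\bar B_r(y)$ with $\Phi(x_\ast)\le\inf_{\bar B_r(y)}\Phi+\epsilon\le\Phi(y)+\epsilon=u(y)+\epsilon$ and
\[
\Phi(x)\ge\Phi(x_\ast)-\sqrt\epsilon\,d(x,x_\ast)\qquad\text{for all }x\in\bar B_r(y).
\]
If $x_\ast$ lay in the topological interior of $\bar B_r(y)$, then all $x$ near $x_\ast$ would be admissible, and the displayed inequality would give $u(x_\ast)-u(x)\le(1-\delta)\bigl(d(x,y)-d(x_\ast,y)\bigr)+\sqrt\epsilon\,d(x,x_\ast)\le(1-\delta+\sqrt\epsilon)\,d(x,x_\ast)$, so $|\partial u|(x_\ast)\le 1-\delta+\sqrt\epsilon<1$ once $\sqrt\epsilon<\delta$, contradicting (i). Hence $x_\ast$ is a boundary point, so $d(x_\ast,y)=r$, and then $u(x_\ast)=\Phi(x_\ast)-(1-\delta)r\le u(y)-(1-\delta)r+\epsilon$. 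Therefore $\inf_{\partial B_r(y)}u\le u(y)-(1-\delta)r+\epsilon$; letting $\epsilon\downarrow 0$ and then $\delta\downarrow 0$ yields $\inf_{\partial B_r(y)}u\le u(y)-r$, which together with the opposite inequality from $1$-Lipschitzness proves (iii).

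I expect the main obstacle to be exactly this last step. A naive ``greedy descent'' from near-minimizers of $u$ on the ball fails, because the descent steps produced by $|\partial u|=1$ can be arbitrarily short relative to the accuracy of the near-minimizer, so one cannot integrate them to reach the boundary. The device that overcomes this is the weighted functional $\Phi=u+(1-\delta)\,d(\cdot,y)$: the strict weight $1-\delta<1$ is precisely what makes an interior Ekeland point violate the slope lower bound, thereby forcing the minimizer onto $\partial B_r(y)$ and delivering the reachability estimate in one stroke. The only genuinely metric inputs are the $1$-Lipschitz lemma on length spaces and the completeness of $\bar B_r(y)$ needed to apply Ekeland's principle.
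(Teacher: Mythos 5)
Your proof is correct, but it reaches the theorem by a genuinely different route than the paper. The paper obtains (i)$\Leftrightarrow$(iii) by citing an external characterization of metric viscosity (Monge) solutions, Lemma \ref{local} (quoted from \cite{LQ} and \cite{QZ}), which reduces (i) to $1$-Lipschitzness plus the inf-identity at \emph{some} radius, and then upgrades ``some radius'' to ``every radius'' and to the dl$_{G}$ property via a Zorn's lemma argument (Lemmas \ref{SSSSS}, \ref{aaaa}, \ref{WWWW} and Corollary \ref{SSSS}) built on the partial order $x\succeq_{\delta}y$ iff $u(x)-u(y)\ge \delta d(x,y)$. You instead prove the hard implication (i)$\Rightarrow$(iii) directly, at every radius simultaneously, by applying Ekeland's variational principle on the complete set $\bar{B}_{r}(y)$ to the weighted functional $u+(1-\delta)d(\cdot,y)$, whose strict weight $1-\delta<1$ forces the Ekeland point onto the sphere; this is sound (the interior case indeed contradicts $|\partial u|(x_{*})=1$, and $d(x_{*},y)<r$ guarantees that nearby competitors are admissible). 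The two mechanisms are secretly the same: the paper's order $\succeq_{\delta}$ is exactly the Bishop--Phelps ordering that underlies standard proofs of Ekeland's principle, so both arguments run on completeness, but you invoke the principle off the shelf while the paper rebuilds it by hand. What you gain is a shorter, self-contained proof that never appeals to \cite{LQ} or \cite{QZ} for the substantive direction; what the paper's factorization gains is contact with the literature's notion of Monge solutions and reusable intermediate results (Corollary \ref{SSSS} is used again in the proof of Theorem \ref{CC}, and the argument of Lemma \ref{aaaa} reappears in the stability discussion). Your remaining implications --- (iii)$\Rightarrow$(i) by near-minimizers on small spheres, and (ii)$\Leftrightarrow$(iii) through sublevel sets and near-geodesics crossing spheres --- essentially parallel the paper's proof of (ii)$\Rightarrow$(i). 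One small point of hygiene: you assert that a \emph{continuous} function with $|\partial u|\le 1$ everywhere is $1$-Lipschitz and propose to integrate Dini derivatives; integration needs a.e.\ differentiability, so either invoke the classical Dini-derivative comparison lemma for continuous functions, or simply note that the paper's definition of a metric viscosity solution already requires $u$ to be locally Lipschitz, which makes your integration argument literal.
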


\begin{defn}[$\varepsilon$-Negative Gradient Curve]
Let $(\mathcal{Y},d)$ be a complete length space and $\Omega \subset \mathcal{Y}$ be an open set. Let $u$ be a 1-Lipschitz solution on $\Omega$. For any $\varepsilon>0$, an $\varepsilon$-negative gradient curve of $u$ is a regular curve (Definition \ref{RC}) $\gamma:[0,r] \rightarrow \Omega$ such that for all $0 \leq t_1 \leq t_2 \leq r$
\begin{equation*}
u(\gamma(t_{1}))-u(\gamma(t_{2})) \geq d(\gamma(t_{1}),\gamma(t_2))-\varepsilon.
\end{equation*}
\end{defn}
On a non-compact complete Riemannian manifold $(\mathcal{M},g)$, for any viscosity solution $u$ of the eikonal equation $|\triangledown u|_g=1$, for any point $x$ there exists at least one global negative gradient curve $\gamma$ starting from $x$ (i.e., $\gamma: [0,\infty) \rightarrow \mathcal{M}$ satisfies $\gamma(0)=x$ and $u(\gamma(t_1))-u(\gamma(t_2))=d(\gamma(\gamma_1),\gamma(t_2))$ for all $t_2 \geq t_1 \geq0$). Similarly, for the metric viscosity solutions, analogous conclusion for $\varepsilon$-negative gradient curves can be drawn as follows.

\begin{The}\label{CC}
Let $u$ be a metric viscosity solution on a complete unbounded length space $(\mathcal{Y},d)$. Then for any point $y_0 \in \mathcal{Y}$ and for every $\varepsilon > 0$, there exists a regular curve $\gamma:[0,\infty) \to \mathcal{Y}$ initiated from $y_0$ such that
\begin{equation*}
u(\gamma({t_1})) - u(\gamma({t_2})) \geq d(\gamma({t_2}),\gamma({t_1})) - \varepsilon
\end{equation*}
for all $t_2 \geq t_1 \geq 0$ and $\lim_{t \rightarrow \infty}d(y_0,\gamma(t))=\infty$.
\end{The}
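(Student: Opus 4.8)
The plan is to build $\gamma$ explicitly as an infinite concatenation of almost-geodesic segments, choosing the joints by repeatedly invoking characterization (iii) of Theorem \ref{VS}. Fix $y_0$ and $\varepsilon>0$, and choose errors $\delta_k>0$ with $\sum_{k\ge 1}\delta_k\le \varepsilon/2$. Set $p_0=y_0$. Having chosen $p_{k-1}$, I apply (iii) with radius $r=1$: since $u(p_{k-1})=1+\inf_{x\in\partial B_1(p_{k-1})}u(x)$, there is a point $p_k\in\partial B_1(p_{k-1})$ with $d(p_{k-1},p_k)=1$ and $D_k:=u(p_{k-1})-u(p_k)>1-\delta_k$; as $u$ is $1$-Lipschitz (being a dl$_G$-function by Theorem \ref{VS}) we also have $D_k\le 1$. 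Because $(\mathcal Y,d)$ is a length space, I can join $p_{k-1}$ to $p_k$ by a curve $\sigma_k$, parametrized by arc length on $[0,L_k]$, whose length satisfies $1\le L_k\le 1+\delta_k$. Let $\gamma$ be the concatenation of the $\sigma_k$, reparametrized by its global arc length; since $T_k:=\sum_{m\le k}L_m\ge k\to\infty$, the curve $\gamma$ is defined on all of $[0,\infty)$ and starts at $y_0$.

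The crucial point is that the desired inequality already holds \emph{inside} each segment, with an error comparable to $\delta_k$, precisely because the drop of $u$ across $\sigma_k$ nearly equals its length. Indeed, for $0\le s_1\le s_2\le L_k$, writing the drop as $D_k$ minus the two end pieces and using $1$-Lipschitzness together with $d(\sigma_k(a),\sigma_k(b))\le|a-b|$, one obtains
\begin{equation*}
u(\sigma_k(s_1))-u(\sigma_k(s_2))\ge D_k-s_1-(L_k-s_2)=(s_2-s_1)-(L_k-D_k)\ge d(\sigma_k(s_1),\sigma_k(s_2))-2\delta_k,
\end{equation*}
since $L_k-D_k\le 2\delta_k$. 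This is exactly the $\varepsilon$-negative gradient inequality on a single segment, and it is the place where the metric viscosity property, through the near-maximal drop furnished by (iii), is really used.

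It then remains to telescope across segments. For $t_1\le t_2$ lying in segments $i\le j$, I decompose $u(\gamma(t_1))-u(\gamma(t_2))$ into the partial pieces of segments $i$ and $j$ together with the full drops $D_k$ of the intermediate segments, and apply the per-segment bound to each piece; the lengths add up to exactly $t_2-t_1$, while the errors add up to at most $2\sum_{k\ge1}\delta_k\le\varepsilon$. Since $t_2-t_1\ge d(\gamma(t_1),\gamma(t_2))$, this yields $u(\gamma(t_1))-u(\gamma(t_2))\ge d(\gamma(t_1),\gamma(t_2))-\varepsilon$ for all $t_2\ge t_1\ge0$. Finally, taking $t_1=T_k$ gives $u(\gamma(t))\le u(\gamma(T_k))+\varepsilon=u(y_0)-\sum_{m\le k}D_m+\varepsilon\to-\infty$, so by $1$-Lipschitzness $d(y_0,\gamma(t))\ge u(y_0)-u(\gamma(t))\to\infty$, which is the escape to infinity.

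The construction is routine once the per-segment mechanism is in hand; the two points requiring care are the balance between $\sum r_k=\infty$ (here $r_k\equiv1$, forcing $\gamma$ to be global and to escape) and $\sum\delta_k<\infty$ (keeping the cumulative error below $\varepsilon$), and the verification that the arc-length concatenation $\gamma$ meets the regularity requirement of Definition \ref{RC}; the latter should hold because each $\sigma_k$ has unit metric speed and $u$ is strictly decreasing along $\gamma$ up to the $\varepsilon$-error, so $\gamma$ does not degenerate. I expect the main obstacle to be the honest bookkeeping ensuring the inequality holds \emph{simultaneously} for all pairs $t_1\le t_2$ rather than only at the selected joints $p_k$; this is resolved by the observation that matching the $u$-drop to the segment length propagates the estimate to every interior pair.
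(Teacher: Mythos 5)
Your proposal is correct and follows essentially the same route as the paper's proof: both iterate the sphere characterization (Theorem \ref{VS}(iii), via Corollary \ref{SSSS}) with unit radius to pick successive points of near-maximal drop, join them by almost-geodesics, prove the negative-gradient estimate inside each segment, and concatenate with summable errors ($\sum\delta_k\le\varepsilon/2$ in your version, $\varepsilon/2^n$ in the paper's) before telescoping and deducing escape to infinity from $u(\gamma(t))\to-\infty$ and $1$-Lipschitzness. The only differences are cosmetic: you parametrize by arc length (which also cleanly settles regularity), while the paper parametrizes each segment on $[n-1,n]$ and derives the interior estimate via a short contradiction argument.
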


Next, we investigate the existence of the metric viscosity solutions in the unbounded Wasserstein space $\mathcal{P}_{p}(\mathcal{X})$, where $\mathcal{X}$ is a complete, separable, locally compact and non-compact geodesic space. For the sake of clarity in the subsequent analysis, we present the following definition.

\begin{defn}[Strong Metric Viscosity Solution]
Let $(\mathcal{Y},d)$ be a complete length space and $\Omega \subset \mathcal{Y}$ be an open set. A metric viscosity solution $u: \Omega \rightarrow \mathbb{R}$ is called a strong metric viscosity solution if for each $y \in \Omega$, there exists at least one negative gradient curve starting from $y$, rather than just an $\varepsilon$-negative gradient curve.
\end{defn}

Despite lacking local compactness, $\mathcal{P}_{p}(\mathcal{X})$ still admits strong metric viscosity solutions. Examples include the Busemann functions \cite{ZG1} and the horo-functions related to atom probability measures \cite{ZG2}. Next, we present two ways to construct more general strong metric viscosity solutions on $\mathcal{P}_p(\mathcal{X})$.
\begin{itemize}
\item{We can construct strong metric viscosity solutions from a special class of dl$_{C}$-functions on $\mathcal{P}_{p}(\mathcal{X})$, elaborated in Theorem \ref{The1.2};}
\item{We are also able to construct strong metric viscosity solutions on $\mathcal{P}_{p}(\mathcal{X})$ from metric viscosity solutions on $\mathcal{X}$, as demonstrated in Theorem \ref{Pro111}.}
\end{itemize}

\begin{defn}[(CS) Condition]
Consider a complete unbounded geodesic space $(\mathcal{Y},d)$ (A geodesic space is a metric space in which every pair of points is joined by a curve whose length is equal to the distance between them), we say a sequence $\{y_{n}\} \subset \mathcal{Y}$ with $\lim_{n \rightarrow \infty } d(y_{n},y_{0})=\infty$ for some $y_{0} \in \mathcal{Y}$ satisfies (CS) condition if there exist $\sigma>0$ and a unit-speed geodesic $\mu^{n}$ from $y_{0}$ to $y_{n}$ for each $n$ such that
$$\partial B_{\sigma}(y_{0})\cap \{ \{\mu^{n}(t)\}_{0 \leq t \leq d(y_{0},y_{n})}   \}_{n \in \mathbb{N}} \ \text{has convergent subsequences}.$$
\end{defn}

\begin{Rem}
It is important to note that on $\mathcal{P}_{p}(\mathcal{X}) \ (p>1)$, whether the (CS) condition holds is independent of specific choices of the $\sigma$, $y_{0}$, and geodesics $\{\mu^{n}\}$, see Lemma \ref{Lem3.10}. The (CS) condition is satisfied in Examples \ref{Ex1} and \ref{ex111} but not in Example \ref{ex5} on $\mathcal{P}_{p}(\mathcal{X})$. However, if the space $\mathcal{Y}$ is locally compact, it is clear that the condition always holds.
\end{Rem}

\begin{defn}[(CS) Condition for Closed Sets]
On a complete unbounded geodesic space $(\mathcal{Y},d)$, we say a sequence $\{H_n\}$ of closed subsets in the space satisfies (CS) condition if, for any $y \in \mathcal{Y}$, there exists a sequence $\{y^{n}\}$ with $y^{n} \in H_{n}$ for all $n \in \mathbb{N}$ such that
\begin{equation*}
\lim_{n \rightarrow \infty}[d(y,H_{n})-d(y,y^{n})]=0,
\end{equation*}
and the sequence $\{y^{n}\}$ itself satisfies the (CS) condition.
\end{defn}

By Example \ref{ex3}, a dl$_{C}$-function may not be a metric viscosity solution on $\mathcal{P}_{p}(\mathcal{X})$. But a special kind of dl$_{C}$-function, which is generated by a sequence of closed sets satisfying the (CS) condition, is even a strong metric viscosity solution, as described below.

\begin{The}\label{The1.2}
On the Wasserstein space $\mathcal{P}_{p}(\mathcal{X}) \ (p>1)$, the dl$_{C}$-function induced by a sequence of closed sets satisfying the (CS) condition must be a strong metric viscosity solution.
\end{The}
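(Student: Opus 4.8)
The plan is to fix an arbitrary base point $y_0 \in \mathcal{P}_p(\mathcal{X})$ and to manufacture directly a genuine (not merely $\varepsilon$-) negative gradient curve issuing from $y_0$; a single construction will then deliver both the metric viscosity solution property and the ``strong'' refinement. Write $u(\cdot)=\lim_{n\to\infty}[d(\cdot,H_n)-c_n]$ for the given $\mathrm{dl}_C$-function, which is $1$-Lipschitz by the Remark, and recall that $d(y_0,H_n)\to\infty$. Invoking the (CS) condition for the closed sets $\{H_n\}$ at $y_0$, I would choose $y^n\in H_n$ with $d(y_0,H_n)-d(y_0,y^n)\to 0$ and such that $\{y^n\}$ obeys the (CS) condition, together with unit-speed geodesics $\mu^n\colon[0,d(y_0,y^n)]\to\mathcal{P}_p(\mathcal{X})$ from $y_0$ to $y^n$.

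The first substantive step is to pass to a limiting ray. Since $\mathcal{P}_p(\mathcal{X})$ is not locally compact, the pointwise precompactness underlying Arzel\`a--Ascoli is not automatic, and this is exactly where the (CS) condition (and the hypothesis $p>1$) enters. The curves $\mu^n$, being unit speed, are equi-$1$-Lipschitz in $t$; meanwhile Lemma \ref{Lem3.10}, which makes the (CS) condition independent of the chosen radius, guarantees that for each fixed $t$ the family $\{\mu^n(t)\}_n$ of crossing points with $\partial B_t(y_0)$ is relatively compact. Arzel\`a--Ascoli then furnishes a subsequence, not relabeled, along which $\mu^n$ converges uniformly on compact intervals to a curve $\gamma\colon[0,\infty)\to\mathcal{P}_p(\mathcal{X})$; here $d(y_0,y^n)\to\infty$ ensures $\gamma$ is defined on the whole half-line. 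Passing distances to the limit shows $\gamma$ is a unit-speed geodesic ray with $\gamma(0)=y_0$, hence regular, with $d(y_0,\gamma(t))=t\to\infty$. I expect this extraction --- and in particular the verification that the (CS) condition really does supply the missing pointwise compactness --- to be the main obstacle, since it is the one place where the failure of local compactness of $\mathcal{P}_p(\mathcal{X})$ must be absorbed.

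The heart of the matter is the identity $u(\gamma(t))=u(y_0)-t$ for all $t\ge 0$. The bound $u(\gamma(t))\ge u(y_0)-t$ is immediate from $1$-Lipschitzness, since $u(y_0)-u(\gamma(t))\le d(y_0,\gamma(t))=t$. For the reverse I would exploit the ``diagonal term'' of the defining limit: for each $n$ with $t\le d(y_0,y^n)$, minimality of the geodesic gives $d(\mu^n(t),H_n)\le d(\mu^n(t),y^n)=d(y_0,y^n)-t$, whence, using $d(\gamma(t),H_n)\le d(\gamma(t),\mu^n(t))+d(\mu^n(t),H_n)$, one obtains $d(\gamma(t),H_n)-c_n\le d(\gamma(t),\mu^n(t))+[d(y_0,y^n)-c_n]-t$. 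Letting $n\to\infty$ along the subsequence, with $d(\gamma(t),\mu^n(t))\to 0$ and $d(y_0,y^n)-c_n\to u(y_0)$ (the latter from the (CS)-for-closed-sets choice of $y^n$), gives $\limsup_n[d(\gamma(t),H_n)-c_n]\le u(y_0)-t$. The crucial observation is that $u(\gamma(t))=\lim_m[d(\gamma(t),H_m)-c_m]$ \emph{exists}, so it equals its value along the subsequence $\{n\}$; therefore $u(\gamma(t))\le u(y_0)-t$, and the identity follows. This is precisely where the (CS) condition is indispensable: it forces the limit point $\gamma(t)$ to inherit the bound coming from the $n$-th set $H_n$, rather than from an uncontrolled term of the defining sequence, which is the very phenomenon that fails in Example \ref{ex3}.

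With $u(\gamma(t))=u(y_0)-t$ established, the conclusions are routine. For $t_2\ge t_1\ge 0$ we get $u(\gamma(t_1))-u(\gamma(t_2))=t_2-t_1=d(\gamma(t_1),\gamma(t_2))$, so $\gamma$ is a genuine negative gradient curve starting from $y_0$. Testing the local slope against this curve gives $|\partial u|(y_0)\ge\limsup_{t\downarrow 0}\frac{(u(y_0)-u(\gamma(t)))^+}{d(y_0,\gamma(t))}=1$, while $1$-Lipschitzness forces $|\partial u|(y_0)\le 1$; as $y_0$ is arbitrary, $|\partial u|\equiv 1$ and $u$ is a metric viscosity solution (equivalently, characterization (iii) of Theorem \ref{VS} follows from the crossing point $\gamma(r)\in\partial B_r(y_0)$). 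Since every point admits such a curve, $u$ is in fact a strong metric viscosity solution, which completes the proof.
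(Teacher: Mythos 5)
Your proposal is correct and takes essentially the same route as the paper: you select approximating points $y^n\in H_n$ via the (CS) condition for closed sets, extract a limiting co-ray using the pointwise relative compactness supplied by Lemma \ref{Lem3.10} together with the Ascoli--Arzel\`a theorem (which the paper packages as Lemma \ref{The1.1}), and then verify the limit is a negative gradient ray by passing distance estimates along the geodesics to the limit. The only cosmetic difference is that you establish $u(\gamma(t))=u(y_0)-t$ through a one-sided triangle-inequality bound combined with $1$-Lipschitzness, whereas the paper first derives the two-sided identity \eqref{gg} and then computes $u(\gamma(t_1))-u(\gamma(t_2))$ directly.
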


Now we would construct strong metric viscosity solutions from another angle. The following result show that any metric viscosity solution on the ambient space can induce a solution on the Wasserstein space, which provides a distinct proof for the existence of the strong metric viscosity solutions. The fact also indicates that the strong metric viscosity solutions on the Wasserstein space are abundant.

\begin{The}\label{Pro111}
Let $u$ be a (strong) metric viscosity solution on a complete, separable, locally compact and non-compact geodesic space $\mathcal{X}$, then the function $\hat{u}$ defined by $\hat{u}(\omega)=\int_{\mathcal{X}}u(x)d\omega$ is still a strong metric viscosity solution on $\mathcal{P}_{p}(\mathcal{X})$ ($p \geq 1$).
\end{The}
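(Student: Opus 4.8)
The plan is to prove that $\hat u$ is globally $1$-Lipschitz on $(\mathcal P_p(\mathcal X),W_p)$, so that $|\partial \hat u|\le 1$ everywhere, and then to \emph{lift} a negative gradient curve of $u$ on $\mathcal X$ through the underlying measure to a negative gradient curve of $\hat u$ on $\mathcal P_p(\mathcal X)$. This lifted curve will simultaneously force the local slope to equal $1$ (so $\hat u$ is a metric viscosity solution) and supply, at every $\omega$, the negative gradient curve required for the strong property. First I would check that $\hat u$ is well defined and $1$-Lipschitz. Since $u$ is a dl$_G$-function it is $1$-Lipschitz, so $|u(x)|\le |u(x_0)|+d(x,x_0)$; as $p\ge 1$, every $\omega\in\mathcal P_p(\mathcal X)$ has finite first moment (by Jensen), whence $\int_{\mathcal X}|u|\,d\omega<\infty$ and $\hat u(\omega)\in\R$. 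For $\omega_1,\omega_2$, taking an optimal plan $\pi$ for $W_p$ and using the $1$-Lipschitz-ness of $u$ together with Hölder's inequality ($p\ge 1$, $\pi$ a probability),
\[
|\hat u(\omega_1)-\hat u(\omega_2)|=\Big|\int (u(x)-u(y))\,d\pi\Big|\le\int d(x,y)\,d\pi\le\Big(\int d(x,y)^p\,d\pi\Big)^{1/p}=W_p(\omega_1,\omega_2),
\]
so $|\partial\hat u|\le 1$ on all of $\mathcal P_p(\mathcal X)$.

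Next I would record the structure of negative gradient curves of $u$ on $\mathcal X$. Because $\mathcal X$ is complete and locally compact it is proper (Hopf--Rinow), so closed balls are compact. If $u$ is only assumed to be a metric viscosity solution, I would apply Theorem \ref{CC} to produce $1/n$-negative gradient curves and extract, via Arzel\`a--Ascoli, a locally uniformly convergent subsequence whose limit is a genuine negative gradient curve; hence on such $\mathcal X$ every metric viscosity solution is automatically strong, and from each $x$ there is a negative gradient curve $\gamma_x$. From $u(\gamma_x(t_1))-u(\gamma_x(t_2))=d(\gamma_x(t_1),\gamma_x(t_2))$ together with the regular (unit-speed) parametrization of Definition \ref{RC}, one checks $\gamma_x$ is a unit-speed geodesic ray with $d(x,\gamma_x(t))=t$ and $u(\gamma_x(t))=u(x)-t$ for all $t\ge 0$.

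Then I would build the lift. Let $\mathrm{Geo}_u\subset C([0,\infty),\mathcal X)$, equipped with the topology of local uniform convergence (a Polish space), denote the set of unit-speed rays satisfying $u(\gamma(t))=u(\gamma(0))-t$; it is closed, and the evaluation $e_0:\gamma\mapsto\gamma(0)$ is a continuous surjection onto $\mathcal X$ (surjective by the strong property) with compact fibers. I would choose a Borel section $s$ with $e_0\circ s=\mathrm{id}$, set $\gamma_x:=s(x)$, and define $\Gamma(t):=(e_t\circ s)_*\omega$, where $e_t(\gamma)=\gamma(t)$. Since $d(x,\gamma_x(t))=t$ and $\omega$ has finite $p$-moment, $\Gamma(t)\in\mathcal P_p(\mathcal X)$. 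The plan $(e_{t_1}\circ s,\,e_{t_2}\circ s)_*\omega$ couples $\Gamma(t_1)$ and $\Gamma(t_2)$, giving
\[
W_p(\Gamma(t_1),\Gamma(t_2))\le\Big(\int d(\gamma_x(t_1),\gamma_x(t_2))^p\,d\omega\Big)^{1/p}=t_2-t_1,
\]
while $\hat u(\Gamma(t))=\int u(\gamma_x(t))\,d\omega=\hat u(\omega)-t$. Combining with the $1$-Lipschitz bound, $t_2-t_1=\hat u(\Gamma(t_1))-\hat u(\Gamma(t_2))\le W_p(\Gamma(t_1),\Gamma(t_2))$, so in fact $W_p(\Gamma(t_1),\Gamma(t_2))=t_2-t_1=\hat u(\Gamma(t_1))-\hat u(\Gamma(t_2))$. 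Thus $\Gamma$ is a regular (unit-speed) negative gradient curve of $\hat u$ from $\omega$ with $W_p(\omega,\Gamma(t))=t\to\infty$. Testing the slope against $\Gamma(t)$ as $t\to 0^+$ gives $|\partial\hat u|(\omega)\ge 1$, so $|\partial\hat u|\equiv 1$ and $\hat u$ is a metric viscosity solution; since $\Gamma$ exists for every $\omega$, it is a strong one.

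The main obstacle is the measurable selection $s$ guaranteeing that $t\mapsto\Gamma(t)=(e_t\circ s)_*\omega$ is a well-defined (Borel) family of probability measures, which is what makes the whole construction meaningful. I would secure it by verifying that $\mathrm{Geo}_u$ is closed in the Polish space $C([0,\infty),\mathcal X)$, that every fiber $e_0^{-1}(x)$ is compact (a family of $1$-Lipschitz curves valued in the compact balls $\bar B_t(x)$, hence relatively compact by Arzel\`a--Ascoli and closed), and then invoking a Borel selection theorem (e.g.\ Kuratowski--Ryll-Nardzewski or von Neumann). The compactness of the fibers, and thus the availability of the selection, rests precisely on the local compactness (properness) of $\mathcal X$, which is why that hypothesis is essential here.
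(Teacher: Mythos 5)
Your proof is correct, but it takes a genuinely different route from the paper's. Both arguments share the same skeleton: establish that $\hat u$ is $1$-Lipschitz on $(\mathcal{P}_p(\mathcal{X}),W_p)$, then from every $\omega$ exhibit a unit-speed curve along which $\hat u$ decays at unit rate, and conclude from the equality case of the Lipschitz bound (which the paper isolates as Lemma \ref{Lem4.5}) that this curve is a negative gradient ray; both also rest on the same base-space fact that on the proper space $\mathcal{X}$ a metric viscosity solution admits a genuine negative gradient ray from each point. The divergence is in how the ray in $\mathcal{P}_p(\mathcal{X})$ is produced for a general $\omega$. The paper first treats the dense set $\mathcal{D}$ of finitely supported measures, where the lift $\mu(t)=\sum_i\lambda_i\delta_{\gamma^i(t)}$ involves only finitely many choices of rays and no measurability issues, and then passes to arbitrary $\omega$ by approximation, which requires a compactness argument in the Wasserstein space (tightness via Lemmas \ref{CON} and \ref{Lem3.7}, followed by the Ascoli--Arzel\`a Theorem \ref{A-A}) to extract a limiting ray from the rays based at the approximating discrete measures. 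You instead build the lift $\Gamma(t)=(e_t\circ s)_{\sharp}\omega$ directly for every $\omega$ through a Borel selection $x\mapsto\gamma_x$ of calibrated rays; your justification of the selection is sound, since the set of calibrated unit-speed rays is closed in the Polish space $C([0,\infty),\mathcal{X})$, the fibers of $e_0$ are compact by Arzel\`a--Ascoli in the proper space $\mathcal{X}$, and the resulting fiber multifunction is upper semicontinuous with compact values, hence weakly measurable, so Kuratowski--Ryll-Nardzewski applies. What each approach buys: yours yields an explicit superposition formula for the negative gradient ray at every $\omega$ and bypasses the approximation and compactness step in $\mathcal{P}_p(\mathcal{X})$ entirely, at the cost of invoking descriptive-set-theoretic selection machinery outside the paper's toolkit; the paper's route stays within the elementary weak-convergence lemmas it has already developed, at the cost of an extra limiting argument. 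A minor further difference at the start: you obtain the $1$-Lipschitz bound by integrating $|u(x)-u(y)|\le d(x,y)$ against an optimal plan and applying H\"older's inequality, whereas the paper invokes Kantorovich--Rubinstein duality together with $W_1\le W_p$; your version is the more elementary of the two and works uniformly for all $p\ge1$.
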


\begin{Rem}
In this study, we are merely concerned with the existence of the strong metric viscosity solutions on the geodesic space $\mathcal{P}_{p}(\mathcal{X})$. Suppose that $(\mathcal{Y},d)$ is not a geodesic space but only a complete length space. It is straightforward to observe that for any $y \in \mathcal{Y}$, $d(\cdot, y)$ is a metric viscosity solution on $\mathcal{Y} \setminus \{y\}$, but there exists $y_0 \in \mathcal{Y}$ such that  $d(\cdot,y_0)$ is not a strong metric viscosity solution. However, on a geodesic space, there are no specific examples of non-strong metric viscosity solutions, although we believe that such examples should exist in abundance.
\end{Rem}

The organization of the remainder of this paper is as follows: Section 2 presents basic theories and definitions related to metric spaces and Wasserstein spaces as background knowledge. In Section 3, we prove Theorems \ref{VS}, \ref{CC} and give an important example, and thus reveal the relationship between the metric viscosity solutions and the dl-functions. To prove Theorem \ref{The1.2}, a key lemma \ref{The1.1} for the existence of co-rays in $\mathcal{P}_p(\mathcal{X})$ is first established in Section 4. Subsequently, in this section, we prove Theorems \ref{The1.2} and \ref{Pro111}, and study the properties of the strong metric viscosity solutions.

\section{Preliminaries}
In this section we review some concepts of metric spaces and Wasserstein spaces, mentioned in the first section.

Let $(\mathcal{Y},d)$ be a metric space. The length of a continuous curve $\gamma: [a,b] \rightarrow \mathcal{Y}$ is defined by
$$L(\gamma)=\sup_{N \in \mathbb{N}} \sup_{a=t_{0}<t_{1}<\cdots<t_{N}=b}\sum_{i=0}^{N-1}d(\gamma(t_{i}),\gamma(t_{i+1})).$$
Note that if $L(\gamma)< \infty$, then the real-valued function $s_\gamma: [a,b] \rightarrow [0,L(\gamma)]$ defined by
\begin{equation*}
s_{\gamma}(t):=L(\gamma \mid_{[a,t]})
\end{equation*}
is a monotone increasing function, and hence it is differentiable at almost every $t \in [a,b]$. We denote $s^{'}_{\gamma}$ by $|\gamma^{'}|$, which can be equivalently defined by
\begin{equation*}
|\gamma^{'}|(t)=\lim_{\tau \rightarrow 0} \frac{d(\gamma(t+\tau),\gamma(t))}{|\tau|}
\end{equation*}
for almost every $t \in (a,b)$. We say $|\gamma^{'}|$ is the speed of the curve $\gamma$ on $[a,b]$.

Let $I \subset \mathbb{R}$ be an interval. We say a curve $\gamma: I \rightarrow \mathcal{Y}$ is rectifiable if $L(\gamma \mid_{[a,b]})< \infty$ for each bounded closed interval $[a,b] \subset I$.
\begin{defn}[Regular Curve]\label{RC}
Let $(\mathcal{Y},d)$ be a metric space and $I \subset \mathbb{R}$ be a non-degenerate interval. A rectifiable curve $\gamma: I \rightarrow \mathcal{Y}$ with $|\gamma^{'}|(t) \neq 0$ for almost every $t \in I$ is called a regular curve.
\end{defn}

A metric space $(\mathcal{Y},d)$ is said to be a length space if for any $x,y \in \mathcal{Y}$
\begin{equation}\label{2.2}
d(x,y)=\inf_{\gamma \in C([0,1],\mathcal{Y})}\{L(\gamma)\mid \gamma({0})=x,\gamma({1})=y\}.
\end{equation}
$\mathcal{Y}$ is a geodesic space if the infimum in \eqref{2.2} is attainable for any $x,y \in \mathcal{Y}$. A continuous curve $\gamma$ is called a constant-speed minimizing geodesic segment if $$d(\gamma({s}),\gamma({t}))=\frac{|t-s|}{b-a}d(\gamma({a}),\gamma({b}))\ \text{for any} \ s,t \in [a,b].$$
For convenience, throughout this paper we use the single word ``geodesic" instead.

\begin{defn}[Ray]\label{ray}
Let $(\mathcal{Y},d)$ be an unbounded metric space. A ray is a regular curve $\gamma \in C(\mathbb{R}_{+}, \mathcal{Y})$ such that $d(\gamma({s}),\gamma({t}))=|s-t|d(\gamma({0}),\gamma({1}))$ for all $s,t \geq 0$ where $k_{\gamma}:=d(\gamma(0),\gamma(1))$ is called the speed of $\gamma.$
\end{defn}

\begin{defn}[Negative Gradient Curve]\label{NGC}
Let $u$ be a 1-Lipschitz function on a metric space $(\mathcal{Y},d)$. A regular curve $\gamma: I \rightarrow \mathcal{Y}$ is called a negative gradient curve of $u$ if for any $[t_1,t_2] \subset I$
\begin{equation*}
u(\gamma({t_1}))-u(\gamma({t_2}))=d(\gamma(t_2),\gamma(t_1)).
\end{equation*}
In particular, if $\gamma$ is a ray, it is then called a negative gradient ray of $u$.
\end{defn}

Consider a Polish space $(\mathcal{Y},d)$. The associated Wasserstein space $\mathcal{P}_{p}(\mathcal{Y})$ of order $p \geq 1$ comprises Borel probability measures on $\mathcal{Y}$ with finite $p$-moment, defined as:
$$\mathcal{P}_{p}(\mathcal{Y})=\left\{\mu \in \mathcal{P}(\mathcal{Y})\left| \int_{\mathcal{Y}}d^{p}(y_{0},y)d\mu(y)<\infty \right.\right\},$$
where $y_{0} \in \mathcal{Y}$ is arbitrarily fixed. The $p$-Wasserstein metric is given by
$$W_{p}(\mu,\nu)=\left(\inf_{\pi \in \Pi(\mu,\nu)}\int_{\mathcal{Y} \times \mathcal{Y}}d^{p}(x,y) \pi(dx,dy)\right)^{\frac{1}{p}},$$
where $\Pi(\mu,\nu)$ is the set of all couplings between $\mu$ and $\nu$. The set of optimal couplings is denoted as $Opt(\mu,\nu)$. For further details, see \cite{LA1,LA3,CV,AF}.

\begin{defn}[Push-forward]
Let $f: \mathcal{X}_{1} \rightarrow \mathcal{X}_{2}$ be a Borel measurable map between Polish spaces, and $\mu$ be a Borel measure on $\mathcal{X}_{1}$. The push-forward of $\mu$ under $f$, denoted by $f_{\sharp}\mu$, is defined by $(f_{\sharp}\mu)(A)=\mu(f^{-1}(A))$ for any Borel subset $A$ of $\mathcal{X}_{2}.$
\end{defn}

The following statement is twofold: The Wasserstein space over a complete separable locally compact length space is a geodesic space; Geodesics in such a Wasserstein space can be considered as probability measures concentrated on the set of geodesics in the ambient space.

\begin{Lem}[{\cite[Proposition 2.5]{ZG1}}]\label{Lem2.4}
Let $p > 1$ and $(\mathcal{Y},d)$ be a complete separable, locally compact length space. Given $\mu,\nu \in \mathcal{P}_{p}(\mathcal{Y})$, let $L=W_{p}(\mu,\nu)$. Then for any continuous curve $(\mu(t))_{0 \leq t \leq L}$ in $\mathcal{P}_{p}(\mathcal{Y})$ with $\mu(0)=\mu,\ \mu(L)=\nu,$ the following properties are equivalent:
  \begin{enumerate}
    \item[(i)]{$\mu(t)$ is the law of $\gamma(t)$, where $\gamma: [0,L] \rightarrow \mathcal{Y}$ is a random geodesic such that $(\gamma(0),\gamma(L))$ is an optimal coupling.}
    \item[(ii)]{$(\mu(t))_{0 \leq t \leq L}$ is a unit-speed geodesic in the space $\mathcal{P}_{p}(\mathcal{Y})$.}
  \end{enumerate}
Moreover, for any given $\mu,\nu \in \mathcal{P}_{p}(\mathcal{Y})$ there exists at least one such curve.
\end{Lem}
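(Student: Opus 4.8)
The plan is to prove the two implications of the equivalence separately and then construct the interpolating curve for the ``moreover'' clause, relying throughout on the fact that a complete, locally compact length space is, by the Hopf--Rinow theorem, a \emph{proper geodesic} space; this properness is what makes the space of minimizing geodesics tractable. Write $e_t$ for the evaluation map $\gamma \mapsto \gamma(t)$ on the space $\mathrm{Geo}_L(\mathcal{Y})$ of constant-speed minimizing geodesics $\gamma\colon[0,L]\to\mathcal{Y}$, equipped with the uniform topology, under which each $e_t$ is continuous.

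For the implication (i)$\Rightarrow$(ii), suppose $\mu(t)=(e_t)_\sharp\Pi$ for a probability measure $\Pi$ on $\mathrm{Geo}_L(\mathcal{Y})$ whose endpoint marginal $(e_0,e_L)_\sharp\Pi$ is optimal. For $0\le s\le t\le L$ the coupling $(e_s,e_t)_\sharp\Pi$ joins $\mu(s)$ and $\mu(t)$, so using that each $\gamma$ has constant speed one computes
\begin{equation*}
W_p(\mu(s),\mu(t))^p \le \int d(\gamma(s),\gamma(t))^p\,d\Pi(\gamma)=\frac{(t-s)^p}{L^p}\int d(\gamma(0),\gamma(L))^p\,d\Pi(\gamma)=(t-s)^p,
\end{equation*}
the last equality because the endpoint coupling is optimal with cost $W_p(\mu,\nu)^p=L^p$. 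The reverse inequality is forced by the triangle inequality: the bounds $W_p(\mu(0),\mu(s))\le s$, $W_p(\mu(s),\mu(t))\le t-s$ and $W_p(\mu(t),\mu(L))\le L-t$ sum to $L=W_p(\mu(0),\mu(L))$, so each must be an equality. Hence $(\mu(t))$ is a unit-speed geodesic.

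The substantive direction is (ii)$\Rightarrow$(i). Starting from a unit-speed geodesic $(\mu(t))$, which is in particular an absolutely continuous curve in $\mathcal{P}_p(\mathcal{Y})$ with metric speed $1$ a.e., I would invoke the superposition principle of Lisini/Ambrosio--Gigli--Savar\'e to obtain a probability measure $\Pi$ on absolutely continuous curves $\gamma\colon[0,L]\to\mathcal{Y}$ with $(e_t)_\sharp\Pi=\mu(t)$ and $\int\!\!\int_0^L|\dot\gamma(t)|^p\,dt\,d\Pi(\gamma)=\int_0^L|\mu'(t)|^p\,dt=L$. Jensen's inequality and length $\ge$ distance give, for each $\gamma$,
\begin{equation*}
\int_0^L|\dot\gamma(t)|^p\,dt\ \ge\ \frac{1}{L^{p-1}}\Big(\int_0^L|\dot\gamma(t)|\,dt\Big)^{p}\ \ge\ \frac{d(\gamma(0),\gamma(L))^p}{L^{p-1}},
\end{equation*}
while $(e_0,e_L)_\sharp\Pi$ being a coupling forces $\int d(\gamma(0),\gamma(L))^p\,d\Pi\ge W_p(\mu,\nu)^p=L^p$. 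Chaining these yields $L\ge L^{-(p-1)}\int d(\gamma(0),\gamma(L))^p\,d\Pi\ge L$, so every inequality is an equality: the Jensen equality case forces $|\dot\gamma|$ constant and length equal to distance for $\Pi$-a.e.\ $\gamma$ (i.e.\ $\gamma$ is a constant-speed minimizing geodesic), and the endpoint estimate becomes equality (i.e.\ $(e_0,e_L)_\sharp\Pi$ is optimal). A more self-contained route avoiding the superposition principle is to glue optimal couplings along a dyadic partition of $[0,L]$: the geodesic property forces, via Minkowski's inequality in $L^p$, that all glued pair-marginals are mutually optimal, and one then extracts a weak limit of the piecewise-geodesic laws, using properness for tightness and Arzel\`a--Ascoli to identify the limit as concentrated on genuine geodesics.

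Finally, for the ``moreover'' statement, take any $\pi\in Opt(\mu,\nu)$ and select, for $\pi$-a.e.\ pair $(x,y)$, a minimizing geodesic joining them; composing $\pi$ with this selection produces a random geodesic $\Pi$ with $(e_0,e_L)_\sharp\Pi=\pi$ optimal, whence $(e_t)_\sharp\Pi$ is a geodesic by (i)$\Rightarrow$(ii). I expect the main obstacle to lie exactly in the measurable construction of the random geodesic---either the superposition principle in (ii)$\Rightarrow$(i) or the measurable geodesic selection here. Properness of $\mathcal{Y}$ is the crucial input: it makes the set-valued map $(x,y)\mapsto\{\text{geodesics from }x\text{ to }y\}$ nonempty and compact-valued, so the Kuratowski--Ryll-Nardzewski selection theorem applies and yields the desired Borel map.
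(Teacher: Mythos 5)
The paper itself contains no proof of this lemma: it is quoted directly from \cite[Proposition 2.5]{ZG1}, so your attempt can only be compared against the standard argument behind that citation (essentially Villani's \emph{Optimal Transport}, Theorem 7.21 and Corollary 7.22, adapted to a proper length space). Your proof is correct, and its core takes a genuinely different route. The direction (i)$\Rightarrow$(ii) is the standard computation (constant-speed reparametrization plus the triangle-inequality rigidity), and that part matches the classical proof. For (ii)$\Rightarrow$(i), however, the classical mechanism is to glue optimal couplings at dyadic times and extract a limiting dynamical coupling via tightness and Arzel\`a--Ascoli --- the route you only sketch as an alternative --- whereas your main argument invokes Lisini's superposition principle, which is legitimate here since $p>1$ and $\mathcal{Y}$ is complete and separable (and \cite{SL} is already part of this paper's toolkit), and then concludes by the equality case of Jensen's inequality: strict convexity of $x\mapsto x^p$ forces $\Pi$-a.e.\ curve to have constant speed and length equal to the distance between its endpoints, while the endpoint estimate forces optimality of $(e_0,e_L)_\sharp\Pi$. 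This buys a very short rigidity analysis at the price of a heavier off-the-shelf theorem; the gluing route is more self-contained. Your treatment of the ``moreover'' clause via measurable selection of geodesics over an optimal plan is also the standard device (cf.\ Villani, Corollary 7.22). The one point you assert rather than verify is the weak measurability of the multifunction $(x,y)\mapsto\{\text{geodesics from }x\text{ to }y\}$ needed for Kuratowski--Ryll-Nardzewski; this does hold --- closed graph, compact values by properness (Hopf--Rinow, consistent with the paper's citation of \cite[Proposition 2.5.22]{Geo}) and Arzel\`a--Ascoli, plus $\sigma$-compactness of $\mathcal{Y}\times\mathcal{Y}$ --- but it is a nontrivial verification that deserves at least a reference.
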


Let $\mathcal{Y}$ be a metric space. We say that a sequence $\{\mu^{n}\} \subset \mathcal{P}(\mathcal{Y})$ converges weakly to $\mu \in \mathcal{P}(\mathcal{Y})$, denoted by $\mu^{n} \Rightarrow \mu$, if
\begin{equation*}
\lim_{n \rightarrow \infty}\int_{\mathcal{Y}}f(y)d\mu_{n}(y)=\int_{\mathcal{Y}}f(y)d\mu(y)
\end{equation*}
for every bounded continuous function $f$ on $\mathcal{Y}$. A subset $S \subset \mathcal{P}(\mathcal{Y})$ is tight if, for every $\varepsilon>0$, there exists a compact set $K_{\varepsilon} \subset \mathcal{Y}$ with $\mu(K_{\varepsilon})>1-\varepsilon$ for all $\mu \in S.$ The following theorem indicates why the tightness makes sense.

\begin{The}[Prokhorov, {\cite[Theorem 5.1.3]{LA1}}]\label{The2.6}
Let $\mathcal{Y}$ be a Polish space. A subset $S \subset \mathcal{P}(\mathcal{Y})$ is tight if and only if it is relatively compact in $\mathcal{P}(\mathcal{Y})$.
\end{The}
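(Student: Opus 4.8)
The plan is to prove the two implications separately, since they rely on genuinely different features of the Polish space $\mathcal{Y}$: the direction ``tight $\Rightarrow$ relatively compact'' extracts a weakly convergent subsequence from an arbitrary sequence in $S$, whereas the converse ``relatively compact $\Rightarrow$ tight'' exploits separability together with completeness to manufacture the compact sets witnessing tightness. Throughout I work with the sequential characterization of relative compactness, which is legitimate because the weak topology on $\mathcal{P}(\mathcal{Y})$ is metrizable for Polish $\mathcal{Y}$.

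For the forward implication I would first reduce to a compact base. Since $\mathcal{Y}$ is Polish it embeds homeomorphically into the Hilbert cube $[0,1]^{\mathbb{N}}$; let $\overline{\mathcal{Y}}$ denote the (compact metrizable) closure of the image, so every $\mu \in S$ is also a Borel probability measure on $\overline{\mathcal{Y}}$. Because $C(\overline{\mathcal{Y}})$ is separable, the dual unit ball is weak-$*$ sequentially compact (Banach--Alaoglu plus metrizability), and $\mathcal{P}(\overline{\mathcal{Y}})$ sits inside it as a weak-$*$ closed subset (positivity and unit total mass are closed conditions); hence from any $\{\mu_n\} \subset S$ one extracts $\mu_{n_k} \Rightarrow \mu$ with $\mu \in \mathcal{P}(\overline{\mathcal{Y}})$. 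The crux is that $\mu$ is concentrated on $\mathcal{Y}$: fixing $\varepsilon > 0$ and a compact $K_\varepsilon \subset \mathcal{Y}$ with $\mu_n(K_\varepsilon) > 1 - \varepsilon$ for all $n$, the set $K_\varepsilon$ is closed in $\overline{\mathcal{Y}}$, so the Portmanteau inequality for closed sets gives $\mu(K_\varepsilon) \geq \limsup_k \mu_{n_k}(K_\varepsilon) \geq 1 - \varepsilon$. Letting $\varepsilon \to 0$ yields $\mu(\mathcal{Y}) = 1$, hence $\mu \in \mathcal{P}(\mathcal{Y})$, and testing against bounded continuous functions on $\mathcal{Y}$ transfers the convergence back to $\mu_{n_k} \Rightarrow \mu$ in $\mathcal{P}(\mathcal{Y})$.

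For the converse, suppose $S$ is relatively compact and fix $\varepsilon > 0$. Let $\{x_i\}_{i \in \mathbb{N}}$ be dense in $\mathcal{Y}$. For each $m$ I claim there is $N_m$ with
\begin{equation*}
\mu\!\left(\bigcup_{i=1}^{N_m} \overline{B}(x_i, 1/m)\right) > 1 - \frac{\varepsilon}{2^m} \quad \text{for all } \mu \in S.
\end{equation*}
If this failed for some $m$, then for each $N$ there is $\mu_N \in S$ with $\mu_N\big(\bigcup_{i \le N} \overline{B}(x_i,1/m)\big) \le 1 - \varepsilon/2^m$; extracting $\mu_{N_j} \Rightarrow \mu$, and noting $N_j \ge N$ eventually together with the Portmanteau bound on the open set $U_N := \bigcup_{i \le N} B(x_i,1/m)$, gives $\mu(U_N) \le \liminf_j \mu_{N_j}(U_N) \le 1 - \varepsilon/2^m$ for every fixed $N$. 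Since $U_N \uparrow \mathcal{Y}$ by density, this forces $\mu(\mathcal{Y}) \le 1 - \varepsilon/2^m < 1$, a contradiction. Granting the claim, put
\begin{equation*}
K := \bigcap_{m=1}^{\infty} \bigcup_{i=1}^{N_m} \overline{B}(x_i, 1/m),
\end{equation*}
which is closed and totally bounded, hence compact by completeness of $\mathcal{Y}$, with $\mu(\mathcal{Y} \setminus K) \le \sum_m \varepsilon/2^m = \varepsilon$ for all $\mu \in S$; this is tightness.

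The main obstacle is the forward direction's ``no mass escapes'' step: the subsequence is produced only in the compactification $\overline{\mathcal{Y}}$, where without tightness the weak limit could deposit mass on $\overline{\mathcal{Y}} \setminus \mathcal{Y}$, so the delicate part is the Portmanteau argument confining $\mu$ to $\mathcal{Y}$ and then transferring convergence back to $\mathcal{P}(\mathcal{Y})$. In the converse, the parallel subtlety is that total boundedness alone does not yield a compact set; it is precisely completeness of $\mathcal{Y}$ that upgrades $K$ to a compact set, which is why the Polish hypothesis cannot be weakened to mere separability.
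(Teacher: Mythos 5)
The paper does not prove this statement at all: it is quoted verbatim as Prokhorov's theorem with a citation to \cite[Theorem 5.1.3]{LA1}, and is used purely as a black box (e.g.\ in Lemma \ref{CON}). So there is no internal proof to compare against; your proposal can only be judged on its own merits, and on those it is correct. It is the classical textbook argument: for sufficiency, embed the Polish space in the Hilbert cube, extract a weak-$*$ limit in $\mathcal{P}(\overline{\mathcal{Y}})$ via separability of $C(\overline{\mathcal{Y}})$ and Banach--Alaoglu, and use the closed-set Portmanteau inequality on the uniform compact sets $K_\varepsilon$ to pin the limit mass inside $\mathcal{Y}$; for necessity, the dense-ball covering argument with radii $1/m$ and error budget $\varepsilon/2^m$, upgraded to compactness of $K$ by completeness. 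Both halves are sound, including the contradiction step $\mu(U_N) \leq \liminf_j \mu_{N_j}(U_N) \leq 1-\varepsilon/2^m$ with $U_N \uparrow \mathcal{Y}$. Two small points worth tightening if you write this out in full. First, the phrase ``testing against bounded continuous functions on $\mathcal{Y}$ transfers the convergence back'' hides a step, since a bounded continuous function on $\mathcal{Y}$ need not extend continuously to $\overline{\mathcal{Y}}$; the clean fix needs no Tietze argument: once $\mu(\mathcal{Y})=1$, every open $U \subset \mathcal{Y}$ has the form $V \cap \mathcal{Y}$ with $V$ open in $\overline{\mathcal{Y}}$, and $\liminf_k \mu_{n_k}(U) = \liminf_k \mu_{n_k}(V) \geq \mu(V) = \mu(U)$, so the open-set Portmanteau criterion gives $\mu_{n_k} \Rightarrow \mu$ in $\mathcal{P}(\mathcal{Y})$ directly. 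Second, to even write $\mu(\mathcal{Y})$ you should note that $\mathcal{Y}$ is Borel (indeed $G_\delta$) in $\overline{\mathcal{Y}}$, or simply observe that the $\sigma$-compact set $\bigcup_m K_{1/m} \subset \mathcal{Y}$ already carries full $\mu$-mass, which suffices. Neither point is a gap in the ideas, only in the write-up.
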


If $\mathcal{Y}$ is a complete locally compact length space, then every closed ball in $\mathcal{Y}$ is compact (see \cite[Proposition 2.5.22]{Geo}). Hence the following lemma holds.
\begin{Lem}\label{CON}
Let $(\mathcal{Y},d)$ be a complete, separable, locally compact length space and $\{\mu_n\}$ be a bounded sequence in the Wasserstein space $\mathcal{P}_p(\mathcal{Y})$, then there exists a subsequence that converges weakly in $\mathcal{P}_p(\mathcal{Y})$.
\end{Lem}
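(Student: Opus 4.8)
The plan is to deduce the result from Prokhorov's theorem (Theorem \ref{The2.6}), whose hypothesis is tightness, and then to check that the weak limit still lies in $\mathcal{P}_p(\mathcal{Y})$. Fix a base point $y_0 \in \mathcal{Y}$. A bounded sequence $\{\mu_n\}$ in $\mathcal{P}_p(\mathcal{Y})$ satisfies $\sup_n W_p(\mu_n,\delta_{y_0})<\infty$ (any fixed reference measure works, by the triangle inequality); since transport to the Dirac mass $\delta_{y_0}$ is forced, this is exactly the uniform moment bound $M := \sup_n \int_{\mathcal{Y}} d^p(y_0,y)\,d\mu_n(y) < \infty$. This finite quantity is the only input I will use.

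First I would establish tightness. Given $\varepsilon>0$, Markov's inequality applied to the nonnegative function $y \mapsto d^p(y_0,y)$ gives, for every $R>0$,
\[
\mu_n\big(\mathcal{Y}\setminus \bar{B}_R(y_0)\big) \le \frac{1}{R^p}\int_{\mathcal{Y}} d^p(y_0,y)\,d\mu_n(y) \le \frac{M}{R^p}
\]
uniformly in $n$. Choosing $R$ so large that $M/R^p<\varepsilon$, the closed ball $K_\varepsilon:=\bar{B}_R(y_0)$ carries mass at least $1-\varepsilon$ for every $n$. The point is that, because $\mathcal{Y}$ is complete, locally compact and a length space, every closed ball is compact (the fact recalled just above from \cite[Proposition 2.5.22]{Geo}); hence $K_\varepsilon$ is a compact set, and $\{\mu_n\}$ is tight.

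By Prokhorov's theorem (Theorem \ref{The2.6}), tightness makes $\{\mu_n\}$ relatively compact in $\mathcal{P}(\mathcal{Y})$ for the weak topology, so some subsequence $\mu_{n_k}$ converges weakly to a probability measure $\mu \in \mathcal{P}(\mathcal{Y})$. It remains to verify $\mu \in \mathcal{P}_p(\mathcal{Y})$, i.e., that the $p$-moment survives in the limit. For each $K>0$ the truncation $f_K := d^p(y_0,\cdot)\wedge K$ is bounded and continuous, so weak convergence gives $\int f_K\,d\mu = \lim_k \int f_K\,d\mu_{n_k} \le M$; letting $K\to\infty$ and using monotone convergence yields $\int_{\mathcal{Y}} d^p(y_0,y)\,d\mu(y) \le M < \infty$. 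Thus $\mu \in \mathcal{P}_p(\mathcal{Y})$ with $\mu_{n_k}\Rightarrow\mu$, as required.

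I expect no serious obstacle: the content is concentrated in the first reduction, where local compactness is promoted to properness (compactness of closed balls) so that the uniform moment bound converts into tightness via Markov's inequality. The only point demanding a little care is the last step, since weak convergence alone does not preserve moments exactly; here the lower semicontinuity of the moment functional, handled by the truncation argument above, is enough to keep the limit inside $\mathcal{P}_p(\mathcal{Y})$, which is all the statement requires.
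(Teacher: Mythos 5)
Your proposal is correct and follows essentially the same route as the paper: a uniform $p$-moment bound from $W_p(\mu_n,\delta_{y_0})$, a Markov-type estimate combined with compactness of closed balls to get tightness, and then Prokhorov's theorem. The only difference is cosmetic: where you verify $\mu \in \mathcal{P}_p(\mathcal{Y})$ by hand via truncation and monotone convergence, the paper simply cites \cite[Lemma 2.1.12]{AF} for that step.
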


\begin{proof}
Let $y \in \mathcal{Y}$, there exists a constant $C>0$ such that $W_p(\delta_y,\mu^n) \leq C$ for all $n \in \mathbb{N}$. Let $\pi^n$ be an optimal coupling between $\delta_y$ and $\mu^n$, we have
\begin{equation*}
\mu^n\{x \mid d(y,x) \geq R\}=\pi^n \{(y,x) \mid d(y,x) \geq R\} \leq \left[ \frac{W_p(\delta_y,\mu^n)}{R}\right]^p \leq (\frac{C}{R})^p.
\end{equation*}
Then $\{\mu^n\}$ is tight, by Theorem \ref{The2.6}, there exists a subsequence $\{\mu^{n_k}\}$ such that $\mu^{n_k} \Rightarrow \mu \in \mathcal{P}(\mathcal{Y})$. \cite[Lemma 2.1.12]{AF} implies $\mu \in \mathcal{P}_p(\mathcal{Y})$, we complete the proof.
\end{proof}

The next theorem provides an effective description of the convergence with respect  to the Wasserstein distance.
\begin{The}[{\cite[Theorem 7.1.5]{LA1}}]\label{The2.7}
Let $(\mathcal{Y},d)$ be a Polish space and $p \geq 1$. Let $\{\mu^{n}\}_{n \in \mathbb{N}} \subset \mathcal{P}_{p}(\mathcal{Y})$ and $\mu \in \mathcal{P}_{p}(\mathcal{Y})$, then the following statements are equivalent:
  \begin{enumerate}
    \item[(i)]{For some $y_0 \in \mathcal{Y}$, $\mu^{n} \Rightarrow \mu$ and
              $$\lim_{R \rightarrow \infty}\limsup_{n \rightarrow \infty}\int_{d(y_0,y) \geq R}d^p(y_0,y)d\mu^{n}(y)=0;$$}
    \item[(ii)]{$W_{p}(\mu^{n},\mu) \rightarrow 0$ as $n \rightarrow \infty$}.
  \end{enumerate}
\end{The}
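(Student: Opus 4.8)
The plan is to prove the two implications separately, with (ii)$\Rightarrow$(i) being routine and (i)$\Rightarrow$(ii) carrying the real content. Throughout write $\phi(y):=d^p(y_0,y)$, a nonnegative continuous function that lies in $L^1(\mu^n)$ for every $n$ and in $L^1(\mu)$ by the finite $p$-moment assumption, and note the identity $W_p(\delta_{y_0},\nu)=(\int_{\mathcal{Y}}\phi\,d\nu)^{1/p}$, valid because $\delta_{y_0}\otimes\nu$ is the only coupling of $\delta_{y_0}$ and $\nu$.

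For (ii)$\Rightarrow$(i) I would argue in three steps. First, $W_p(\mu^n,\mu)\to 0$ forces $\mu^n\Rightarrow\mu$: for any bounded Lipschitz $f$, Kantorovich--Rubinstein duality together with $W_1\le W_p$ gives $|\int f\,d\mu^n-\int f\,d\mu|\le \mathrm{Lip}(f)\,W_p(\mu^n,\mu)\to 0$, and convergence against bounded Lipschitz test functions characterizes weak convergence on a Polish space. Second, the reverse triangle inequality $|W_p(\delta_{y_0},\mu^n)-W_p(\delta_{y_0},\mu)|\le W_p(\mu^n,\mu)$ combined with the identity above yields convergence of the full $p$-th moments, $\int\phi\,d\mu^n\to\int\phi\,d\mu$. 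Third, fixing continuous cutoffs $\chi_R$ with $\mathbf{1}_{[0,R]}\le\chi_R\le\mathbf{1}_{[0,R+1]}$, the product $\phi\cdot(\chi_R\circ d(y_0,\cdot))$ is bounded continuous, so weak convergence gives convergence of the truncated integrals; subtracting these from the convergence of the full moments gives $\int\phi\,(1-\chi_R\circ d(y_0,\cdot))\,d\mu^n\to\int\phi\,(1-\chi_R\circ d(y_0,\cdot))\,d\mu$. Taking $\limsup_n$ and then $R\to\infty$, the right-hand side tends to $0$ by dominated convergence since $\phi\in L^1(\mu)$, which is precisely the tail condition in (i).

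For (i)$\Rightarrow$(ii) the first step is to upgrade the $\limsup_n$ tail hypothesis to a genuinely uniform one, namely $\lim_{R\to\infty}\sup_n\int_{d(y_0,y)\ge R}\phi\,d\mu^n=0$: the hypothesis controls all large $n$ at once, while the finitely many remaining indices each contribute a vanishing tail because $\mu^n\in\mathcal{P}_p(\mathcal{Y})$. With this uniform integrability of the $p$-th moments under $\{\mu^n\}$ in hand, I would invoke the Skorokhod representation theorem on the Polish space $\mathcal{Y}$ to obtain random variables $X_n\sim\mu^n$ and $X\sim\mu$ on a common probability space with $X_n\to X$ almost surely. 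Then $(X_n,X)$ is a (generally non-optimal) coupling, so $W_p^p(\mu^n,\mu)\le\mathbb{E}[d^p(X_n,X)]$, while $d^p(X_n,X)\to 0$ almost surely. The family $\{d^p(y_0,X_n)\}_n$ is uniformly integrable by the uniform tail bound, and the elementary inequality $d^p(X_n,X)\le 2^{p-1}\big(d^p(y_0,X_n)+d^p(y_0,X)\big)$ shows $\{d^p(X_n,X)\}_n$ is dominated by a uniformly integrable family, hence is itself uniformly integrable. Vitali's convergence theorem then upgrades almost-sure convergence to $L^1$ convergence, giving $\mathbb{E}[d^p(X_n,X)]\to 0$ and therefore $W_p(\mu^n,\mu)\to 0$.

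The main obstacle is the direction (i)$\Rightarrow$(ii): weak convergence only controls integrals of bounded continuous functions, whereas $W_p$ involves the unbounded cost $d^p$, so the entire difficulty is bridging this gap. The bridge is the uniform integrability furnished by the tail hypothesis, and the two delicate points are, first, converting its $\limsup_n$ form into a uniform-in-$n$ estimate, and second, passing from almost-sure to $L^1$ convergence via Vitali rather than dominated convergence, since no single integrable majorant is available along the sequence. As an alternative to Skorokhod one could take optimal couplings $\pi^n\in Opt(\mu^n,\mu)$, establish tightness of $\{\pi^n\}$ from tightness of its marginals via Prokhorov's theorem (Theorem \ref{The2.6}), and pass to the limit using lower semicontinuity of the cost together with the same uniform integrability; I prefer the Skorokhod route because it avoids the awkward task of identifying the subsequential limit coupling.
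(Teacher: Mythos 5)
Your proposal is correct, but there is nothing in the paper to compare it against: the paper states Theorem \ref{The2.7} as a quoted result from \cite[Theorem 7.1.5]{LA1} and gives no proof, so you have supplied a self-contained argument for a cited theorem. Both implications check out. In (ii)$\Rightarrow$(i), the chain ``$W_1\le W_p$ plus Kantorovich--Rubinstein gives weak convergence, the reverse triangle inequality with $W_p(\delta_{y_0},\nu)=(\int d^p(y_0,\cdot)\,d\nu)^{1/p}$ gives moment convergence, and cutoffs transfer this to the tails'' is sound; the only point worth making explicit is the sandwich $\mathbf{1}_{\{d(y_0,\cdot)\ge R+1\}}\le 1-\chi_R\circ d(y_0,\cdot)\le \mathbf{1}_{\{d(y_0,\cdot)> R\}}$, which converts convergence of $\int \phi\,(1-\chi_R\circ d(y_0,\cdot))\,d\mu^n$ into the stated $\limsup$ tail condition. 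In (i)$\Rightarrow$(ii), the upgrade from a $\limsup_n$ tail bound to a $\sup_n$ bound (handling finitely many indices separately, using $\mu^n\in\mathcal{P}_p(\mathcal{Y})$ and monotonicity of tails in $R$) is exactly right, Skorokhod representation is legitimate on a Polish space, the domination $d^p(X_n,X)\le 2^{p-1}\bigl(d^p(y_0,X_n)+d^p(y_0,X)\bigr)$ together with the fact that a family dominated by a uniformly integrable family is uniformly integrable is correct, and your insistence on Vitali rather than dominated convergence is the right call, since no single integrable majorant exists along the sequence. For the record, the proof in the cited source \cite{LA1} (and the closely related \cite[Theorem 6.9]{CV}) runs closer to the alternative you sketch at the end --- working with plans, narrow convergence, lower semicontinuity of the cost, and a lemma on integrals of functions of $p$-growth under uniformly integrable moments --- so your Skorokhod route is a genuine, and arguably more probabilistically transparent, variant; what it buys is avoiding any identification of subsequential limit couplings, at the cost of invoking the representation theorem as a black box.
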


The following Ascoli-Arzel$\grave{\text{a}}$ Theorem is a straight corollary from \cite[Theorem 5]{RSC} and \cite[Theorem 11.28]{ZS}.
\begin{The}\label{A-A}
Let $\mathcal{Y}$ be a separable metric space, $\mathcal{K}$ be a complete metric space and $\mathcal{H}$ be a collection of continuous maps from $\mathcal{Y}$ to $\mathcal{K}$.
If $\mathcal{H}$ is equi-continuous and for every $y \in \mathcal{Y}$, $H(y)=\left\{f(y): f \in \mathcal{H} \right\}$ is relatively compact in $\mathcal{K}$.
Then every sequence $\{f_n\}$ in $\mathcal{H}$ has then a subsequence that converges uniformly on every compact subset of $\mathcal{Y}$.
\end{The}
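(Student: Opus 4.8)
The plan is to run the classical diagonal-extraction argument adapted to this metric setting, combining extraction over a countable dense subset of $\mathcal{Y}$ with the equi-continuity hypothesis to upgrade pointwise convergence to uniform convergence on compacta, invoking completeness of $\mathcal{K}$ only at the last step. Write $\rho$ for the metric of $\mathcal{K}$ and $d$ for the metric of $\mathcal{Y}$, and fix an arbitrary sequence $\{f_n\} \subset \mathcal{H}$.

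First I would use separability to pick a countable dense set $D = \{z_1, z_2, \dots\} \subset \mathcal{Y}$. Since $H(z_1)$ is relatively compact in the complete space $\mathcal{K}$, the sequence $\{f_n(z_1)\}$ has a convergent subsequence; extracting successively at $z_2, z_3, \dots$ and passing to the diagonal subsequence $\{g_k\} \subset \{f_n\}$ produces a single subsequence for which $\{g_k(z_j)\}$ converges in $\mathcal{K}$ for every $j \in \mathbb{N}$.

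Next I would show $\{g_k\}$ is uniformly Cauchy on each compact $K \subset \mathcal{Y}$. Fix $\varepsilon > 0$. Equi-continuity together with compactness of $K$ furnishes a single $\delta > 0$ with $d(y, y') < \delta \Rightarrow \rho(f(y), f(y')) < \varepsilon/3$ for all $f \in \mathcal{H}$ and all $y, y' \in K$ (pointwise equi-continuity promotes to uniform equi-continuity on a compact set by a standard covering argument). By density of $D$ and compactness of $K$, cover $K$ by finitely many balls $B(z_{j_1}, \delta), \dots, B(z_{j_m}, \delta)$ centered at points of $D$. Since each $\{g_k(z_{j_i})\}$ converges, hence is Cauchy, choose $N$ so that $\rho(g_k(z_{j_i}), g_l(z_{j_i})) < \varepsilon/3$ for all $k, l \geq N$ and all $i = 1, \dots, m$. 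For an arbitrary $y \in K$, selecting $z_{j_i}$ with $d(y, z_{j_i}) < \delta$, the triangle inequality gives
\begin{equation*}
\rho(g_k(y), g_l(y)) \leq \rho(g_k(y), g_k(z_{j_i})) + \rho(g_k(z_{j_i}), g_l(z_{j_i})) + \rho(g_l(z_{j_i}), g_l(y)) < \varepsilon
\end{equation*}
for all $k, l \geq N$, so $\{g_k\}$ is uniformly Cauchy on $K$. Completeness of $\mathcal{K}$ then supplies a limit to which $\{g_k\}$ converges uniformly on $K$, finishing the proof.

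Since the statement is advertised as a corollary of \cite[Theorem 5]{RSC} and \cite[Theorem 11.28]{ZS}, essentially all that remains is bookkeeping. The one point meriting care — and the most likely obstacle — is the promotion of pointwise equi-continuity to uniform equi-continuity on a compact set, which is precisely where compactness of $K$ enters; the rest is the standard diagonal-plus-covering scheme with an $\varepsilon/3$ estimate. If the cited references instead assume \emph{uniform} equi-continuity on all of $\mathcal{Y}$, this step becomes immediate and the argument reduces to the diagonal extraction together with the triangle-inequality bound above.
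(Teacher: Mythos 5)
Your proof is correct in essence, but it takes a genuinely different route from the paper: the paper offers no argument at all, asserting the theorem as a straight corollary of two cited results (Rudin's Arzel\`a--Ascoli theorem for equicontinuous, pointwise bounded families of complex functions on a separable metric space, and the Li--Zhong--Swartz improvement that allows metric-space targets), whereas you reprove it from scratch by the classical scheme: diagonal extraction over a countable dense set, an $\varepsilon/3$ covering argument giving the uniform Cauchy property on compacta, and completeness of $\mathcal{K}$ at the end. Your route is self-contained and displays the role of each hypothesis --- separability for the diagonal step, relative compactness of the sets $H(y)$ for the extractions, equicontinuity for the Cauchy estimate, and completeness only to produce the limit (in fact relative compactness of $H(y)$ would already supply it, since a Cauchy sequence with a convergent subsequence converges); the paper's route is shorter but asks the reader to assemble the statement from two external sources.

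One step of yours needs a small repair. You promote pointwise equicontinuity to uniform equicontinuity \emph{for pairs of points of $K$}, but you then apply it to the pairs $(y, z_{j_i})$, where $z_{j_i} \in D$ is the center of a covering ball; since $D$ is dense in $\mathcal{Y}$ rather than in $K$ (and $K$ may have empty interior), $z_{j_i}$ need not lie in $K$, so the statement you established does not cover this pair. The fix is the same covering argument with one point allowed to leave $K$: for each $y_0 \in K$ pick $\delta_{y_0}>0$ so that $\rho(f(y_0),f(y'))<\varepsilon/6$ for all $f \in \mathcal{H}$ and all $y' \in \mathcal{Y}$ with $d(y_0,y')<\delta_{y_0}$; extract a finite subcover of $K$ by the balls $B(y_i,\delta_{y_i}/2)$ and set $\delta=\min_i \delta_{y_i}/2$. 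Then $y\in K$, $y'\in\mathcal{Y}$ and $d(y,y')<\delta$ force both $y$ and $y'$ into some $B(y_i,\delta_{y_i})$, whence $\rho(f(y),f(y'))<\varepsilon/3$ for every $f\in\mathcal{H}$, which is the form your triangle inequality actually uses. (If, as in Rudin's formulation, equicontinuity is taken in the uniform sense on all of $\mathcal{Y}$ --- and this is what the paper's applications supply, since the families there are $1$-Lipschitz --- the step is immediate, as you note.)
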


\section{The relationship between metric viscosity solutions and distance-like functions}
In this section, we will prove Theorems \ref{VS}, \ref{CC} and give an example that a dl$_C$-function is not a metric viscosity solution. Firstly, we introduce several auxiliary lammas.

\begin{Lem}\label{SSSSS}
Let $(\mathcal{Y},d)$ be a complete length space and $\Omega \subset \mathcal{Y}$ be an open set. Let $u$ be a 1-Lipschitz function on $\Omega$. If for any $y_{0} \in \Omega$, there exists $r_{0}>0$ such that
\begin{equation*}
u(y_{0})=\inf_{y \in \partial B_{r_{0}}(y_{0})}\{u(y)+d(y,y_{0})\}.
\end{equation*}
Then for any $0<r<r_{0}$, we have
\begin{equation*}
u(y_{0})=\inf_{y \in \partial B_{r}(y_{0})}\{u(y)+d(y,y_{0})\}.
\end{equation*}
\end{Lem}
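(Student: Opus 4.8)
The plan is to split the claimed equality into two inequalities and observe that the nontrivial content lies entirely in one of them. For the inequality $\inf_{y\in\partial B_r(y_0)}\{u(y)+d(y,y_0)\}\ge u(y_0)$, note that every $y\in\partial B_r(y_0)$ satisfies $d(y,y_0)=r$, so the $1$-Lipschitz bound $u(y_0)-u(y)\le d(y_0,y)=r$ gives $u(y)+d(y,y_0)=u(y)+r\ge u(y_0)$ directly. This holds for every radius and uses only that $u$ is $1$-Lipschitz, so the burden is to establish the reverse inequality $\inf_{y\in\partial B_r(y_0)}\{u(y)+r\}\le u(y_0)$ for $0<r<r_0$.

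For the reverse inequality I would argue that the near-minimizers furnished by the hypothesis on the large sphere can be ``pulled back'' onto the small sphere along an almost-geodesic curve. Fix $\epsilon>0$. Since the hypothesis reads $u(y_0)=\inf_{z\in\partial B_{r_0}(y_0)}\{u(z)+r_0\}$, there is $z\in\partial B_{r_0}(y_0)$ with $u(z)\le u(y_0)-r_0+\epsilon$, equivalently $u(y_0)-u(z)\ge r_0-\epsilon$. Because $\mathcal{Y}$ is a length space and $d(y_0,z)=r_0$, for any $\delta>0$ there is a unit-speed rectifiable curve $\gamma:[0,L]\to\mathcal{Y}$ from $y_0$ to $z$ with $L=L(\gamma)\le r_0+\delta$. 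The map $t\mapsto d(y_0,\gamma(t))$ is continuous, vanishes at $t=0$ and equals $r_0>r$ at $t=L$, so by the intermediate value theorem there is $s^\ast\in(0,L)$ with $w:=\gamma(s^\ast)\in\partial B_r(y_0)$.

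The heart of the argument is a metric estimate showing that this $w$ is almost optimal. Since distance is bounded by arc length, $r=d(y_0,w)\le s^\ast$, and bounding the distance from $w$ to $z$ by the length of the terminal subarc gives $d(w,z)\le L-s^\ast\le (r_0+\delta)-r$. Writing $u(y_0)-u(w)=[u(y_0)-u(z)]-[u(w)-u(z)]$ and using the $1$-Lipschitz bound $u(w)-u(z)\le d(w,z)$ together with $u(y_0)-u(z)\ge r_0-\epsilon$, I obtain $u(y_0)-u(w)\ge(r_0-\epsilon)-(r_0-r+\delta)=r-\epsilon-\delta$, hence $u(w)+d(w,y_0)=u(w)+r\le u(y_0)+\epsilon+\delta$. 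As $w\in\partial B_r(y_0)$, this forces $\inf_{y\in\partial B_r(y_0)}\{u(y)+d(y,y_0)\}\le u(y_0)+\epsilon+\delta$, and letting $\delta\to0$ and then $\epsilon\to0$ completes the reverse inequality; combined with the first paragraph this yields the equality.

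The main obstacle I anticipate is purely this metric bookkeeping: one must arrange the curve $\gamma$ to be close enough to a geodesic that the point $w$ it meets on the small sphere is genuinely close, in $u$-value, to the minimizer $z$, which is exactly what the two estimates $r\le s^\ast$ and $d(w,z)\le L-s^\ast$ accomplish once $L$ is taken within $\delta$ of $r_0$. A secondary point to check is that the three points $y_0,z,w$ all lie in $\Omega$ so that the $1$-Lipschitz hypothesis applies to them; this is automatic here, since the statement already presupposes that $u$ is defined on $\partial B_r(y_0)$ and on $\partial B_{r_0}(y_0)$, and only the values of $u$ at these three points (not along the whole curve) enter the computation.
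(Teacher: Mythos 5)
Your proposal is correct and rests on the same key idea as the paper's proof: join $y_0$ to a (near-)minimizer on the outer sphere $\partial B_{r_0}(y_0)$ by an almost-geodesic of length at most $r_0+\delta$, intersect that curve with the inner sphere $\partial B_r(y_0)$ via the intermediate value theorem, and use the $1$-Lipschitz property to transfer the near-optimality to the intersection point. The only difference is organizational --- you argue the nontrivial inequality directly, while the paper assumes a uniform failure $\delta>0$ on the small sphere and derives a contradiction with the hypothesis on the large sphere --- so this is essentially the same argument in contrapositive form.
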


\begin{proof}
Suppose the contrary, there exists $\delta>0$ such that for all $y \in \partial B_{r}(y_{0})$, $u(y_{0}) \leq u(y)+d(y,y_{0})-\delta$. Consider any point $y \in \partial B_{r_{0}}(y_{0})$, and let $\gamma$ be a continuous curve connecting $y_{0}$ to $y$ such that $L(\gamma) \leq d(y_0,y)+\frac{\delta}{2}$. Let $y^{'} \in \gamma \cap \partial B_{r}(y_{0})$, then
\begin{equation*}
\begin{aligned}
u(y_{0})&=u(y_{0})-u(y^{'})+u(y^{'})-u(y)+u(y)\\
&\leq [d(y^{'},y_{0})-\delta]+d(y^{'},y)+u(y)\\
&\leq L(\gamma)+u(y)-\delta\\
&\leq d(y_{0},y)+u(y)-\frac{\delta}{2}.
\end{aligned}
\end{equation*}
By the arbitrariness of $y \in \partial B_{r_{0}}(y_{0})$, we have $\inf_{y \in \partial B_{r_0}(y_0)}\{u(y)+d(y,y_0)\} \geq u(y_0)+\frac{\delta}{2}$ and this contradicts the assumption.
\end{proof}

In order to prove the crucial Lemma \ref{aaaa}, we first recall the minimal element version of the Zorn's Lemma.

\begin{Lem}[{\cite[Zorn's Lemma]{Zorn}}]
Let $\mathcal{S}$ be a non-empty partially ordered set in which every totally ordered subset has a lower bound. Then $\mathcal{S}$ contains at least one minimal element.
\end{Lem}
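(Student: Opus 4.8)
The plan is to obtain the minimal-element version stated here as a direct consequence of the familiar maximal-element formulation of Zorn's Lemma, by passing to the reversed order. Since the statement is quoted from \cite{Zorn}, I would not reprove it from the Axiom of Choice; instead I would record the short order-duality argument that converts one form into the other.

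Concretely, first I would equip $\mathcal{S}$ with the reversed partial order $\preceq$ defined by $a \preceq b$ if and only if $b \leq a$. A subset of $\mathcal{S}$ is totally ordered with respect to $\preceq$ exactly when it is totally ordered with respect to $\leq$, so the two orders have the same chains. Next I would observe that an element is a lower bound of a chain $C$ in $(\mathcal{S},\leq)$ precisely when it is an upper bound of $C$ in $(\mathcal{S},\preceq)$. Hence the hypothesis that every chain in $(\mathcal{S},\leq)$ has a lower bound translates exactly into the hypothesis that every chain in $(\mathcal{S},\preceq)$ has an upper bound.

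With this translation in hand, I would apply the standard (maximal-element) Zorn's Lemma to the non-empty poset $(\mathcal{S},\preceq)$ to produce an element $m$ that is maximal for $\preceq$. Finally I would unwind the definition of $\preceq$: maximality of $m$ for $\preceq$ means there is no $a \in \mathcal{S}$ with $m \prec a$, that is, no $a$ with $a < m$ in the original order, which is exactly the assertion that $m$ is minimal in $(\mathcal{S},\leq)$. This yields the claimed minimal element.

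I do not expect a genuine obstacle here: the content is purely the symmetry of the order axioms under reversal, and the only point requiring a line of care is checking that ``lower bound'' and ``chain'' are interchanged correctly by the duality so that the cited version applies verbatim. Should a self-contained argument be preferred instead, the real work would shift to the usual transfinite-recursion (or Hausdorff maximal principle) proof from the Axiom of Choice, where the nontrivial step is constructing, via a choice function, a strictly decreasing chain and arguing it must terminate at a minimal element; but given the citation, the duality route above is the economical one.
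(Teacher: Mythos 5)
Your proposal is correct: the order-reversal argument cleanly and completely derives the minimal-element form from the standard maximal-element form of Zorn's Lemma, and every step (chains coincide under duality, lower bounds become upper bounds, maximality for the reversed order is minimality for the original) is checked correctly. The paper itself offers no proof of this statement --- it is quoted directly from the cited reference --- so your short duality argument is exactly the right thing to record and is consistent with the paper's treatment.
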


Next, in order to use the Zorn's lemma, we first define a suitable partial order relation.

\begin{defn}
Let $u$ be a function on a metric space $(\mathcal{Y},d)$ and $\delta>0$. We define a partial order relation $\succeq_\delta$ on $\mathcal{Y}$ as follows: for any $x,y \in \mathcal{Y}$, $x \succeq_{\delta} y$ if $u(x)-u(y) \geq \delta d(x,y)$.
\end{defn}

We verify that $\succeq_{\delta}$ indeed forms a partial order relation. It is trivial that $y \succeq_{\delta} y$ for all $y \in \mathcal{Y}$. Hence, we only need to check transitivity and antisymmetry.
\begin{itemize}
\item{Assume that $y_{1} \succeq_{\delta} y_{2}$ and $y_{2} \succeq_{\delta} y_{3}$ on $\mathcal{Y}$, it follows that $u(y_{1})-u(y_{2}) \geq \delta d(y_{1},y_{2})$ and $u(y_{2})-u(y_{3}) \geq \delta d(y_{2},y_{3})$. By the triangle inequality, we have $u(y_{1})-u(y_{3}) \geq \delta d(y_{1},y_{3})$, and thus $y_{1} \succeq_{\delta} y_{3}$.}
\item{Assume that $y_{1}\succeq_{\delta} y_{2}$ and $y_{2} \succeq_{\delta} y_{1}$ on $\mathcal{Y}$, then we have $-\delta d(y_{1},y_{2}) \geq u(y_{1})-u(y_{2}) \geq \delta d(y_{1},y_{2})$. This implies that $d(y_{1},y_{2})=0$ and thus $y_{1}=y_{2}$.}
\end{itemize}

Let $u$ be a 1-Lipschitz function on a complete unbounded metric space $(\mathcal{Y},d)$. For any $0<\delta<1$, $y_{0} \in \mathcal{Y}$ and $c<u(y_{0})$, denote
\begin{equation*}
\mathcal{U}_{c}(y_{0}) =\{ y \in \mathcal{Y} \vert u(y)>c \ \text{and} \ y_{0} \succeq_{\delta}y \}.
\end{equation*}
\begin{Lem}\label{aaaa}
Let $u$ be a 1-Lipschitz function on a complete unbounded length space $(\mathcal{Y},d)$. If for any $y \in \mathcal{Y}$, there exists $r>0$ such that
\begin{equation}\label{kkk}
u(y)=\inf_{x \in \partial B_{r}(y)}\{u(x)+d(x,y)\}.
\end{equation}
Then for any $y_0 \in \mathcal{Y}$ and $c<u(y_0)$, there exists a monotonically decreasing (with respect to order $\succeq_{\delta}$) chain $\{y_{k}\}_{k=0}^{\infty}$ in $\mathcal{U}_{c}(y_{0})$ starting from $y_{0}$ and the corresponding function values $\{u(y_{k})\} \rightarrow c.$
\end{Lem}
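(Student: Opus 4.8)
The plan is to reduce the construction of the chain to a single \emph{descent to a prescribed level} statement, proved by Zorn's Lemma, and then to iterate it.

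First I would record the local descent property implied by the hypothesis. Since $d(x,y)=r$ for $x\in\partial B_r(y)$, equation \eqref{kkk} reads $\inf_{x\in\partial B_r(y)}u(x)=u(y)-r$, and by Lemma \ref{SSSSS} the same identity holds for every $0<r'\le r=:\rho(y)$. Hence for any $y$, any $0<r'\le\rho(y)$ and any $\eta>0$ there is $x\in\partial B_{r'}(y)$ with $u(y)-r'\le u(x)<u(y)-r'+\eta$, the lower bound coming from the $1$-Lipschitz property. Choosing $\eta<(1-\delta)r'$ gives $u(y)-u(x)>r'-\eta>\delta r'=\delta d(x,y)$, so $y\succeq_{\delta}x$ with $x\neq y$, while the value $u(x)$ can be kept within a prescribed small interval just below $u(y)$.

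The key step is the following sublemma: for any $y$ and any $t<u(y)$ there exists $w$ with $y\succeq_{\delta}w$ and $u(w)=t$. To prove it I would apply Zorn's Lemma to $\mathcal V_t(y):=\{w:\ y\succeq_{\delta}w,\ u(w)\ge t\}$, ordered by $\succeq_{\delta}$ (nonempty since $y\in\mathcal V_t(y)$). Given a totally ordered subset $C$, set $a:=\inf_{w\in C}u(w)\ge t$ and pick a decreasing cofinal sequence $\{w_n\}$ with $u(w_n)\downarrow a$; the order relation gives $d(w_n,w_m)\le\delta^{-1}(u(w_n)-u(w_m))$, so $\{w_n\}$ is Cauchy and, by completeness of $\mathcal Y$, converges to some $z$. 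Continuity of $u$ and $d$ yields $u(z)=a\ge t$ and $y\succeq_{\delta}z$, and a short comparison argument shows that $z$ dominates every element of $C$, i.e. $z$ is a lower bound for $C$ in $\mathcal V_t(y)$. Thus Zorn produces a minimal element $z_t$. If $u(z_t)>t$, the descent property at $z_t$ with radius $r'<\min\{\rho(z_t),u(z_t)-t\}$ produces $x$ with $z_t\succeq_{\delta}x$, $x\neq z_t$, and $u(x)\ge u(z_t)-r'>t$; by transitivity $y\succeq_{\delta}x$, so $x\in\mathcal V_t(y)$, contradicting minimality. Hence $u(z_t)=t$, proving the sublemma.

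Finally I would assemble the chain. Put $t_k:=c+2^{-k}(u(y_0)-c)$, so $t_0=u(y_0)$ and $t_k\downarrow c$ with $t_k>c$ for all $k$. Starting from $y_0$ and applying the sublemma with base point $y_k$ and level $t_{k+1}<u(y_k)=t_k$, I obtain $y_{k+1}$ with $y_k\succeq_{\delta}y_{k+1}$ and $u(y_{k+1})=t_{k+1}$. Transitivity gives $y_0\succeq_{\delta}y_k$ and $u(y_k)=t_k>c$, so each $y_k$ lies in $\mathcal U_c(y_0)$; by construction $\{y_k\}$ is monotonically decreasing for $\succeq_{\delta}$ and $u(y_k)=t_k\to c$, as required. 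I expect the main obstacle to be the verification that chains in $\mathcal V_t(y)$ admit lower bounds lying in $\mathcal V_t(y)$ — in particular that the limit $z$ dominates \emph{every} element of $C$ and not merely the cofinal sequence — together with the delicate point of keeping the chain strictly inside $\mathcal U_c(y_0)$ (values $>c$) while still forcing the values down to $c$, which is exactly why the level-$t_k$ version of the descent, rather than a single descent to level $c$, is needed.
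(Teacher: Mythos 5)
Your proposal is correct, and it rests on the same two essential ingredients as the paper's proof --- the Cauchy-sequence argument (monotone values plus $u(y_{k-1})-u(y_k)\geq\delta d(y_{k-1},y_k)$ and completeness) to produce lower bounds for chains, and the descent step obtained from \eqref{kkk} together with Lemma \ref{SSSSS} to contradict minimality of a Zorn element --- but it organizes them in a genuinely different way. The paper argues by contradiction: it assumes no decreasing chain from $y_0$ has values converging to $c$, and this assumption is precisely what guarantees that the limit of a chain in $\mathcal{U}_{c}(y_{0})$ still satisfies $u>c$, i.e.\ remains in a poset defined by an \emph{open} condition; Zorn then gives a minimal element and the descent step yields the contradiction. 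You instead apply Zorn to $\mathcal{V}_t(y)=\{w:\ y\succeq_{\delta}w,\ u(w)\geq t\}$, whose \emph{closed} condition makes chain limits lie in the poset unconditionally, so no reductio is needed; minimality plus descent then forces the minimal element to sit exactly at level $t$, and you iterate this sublemma along $t_k\downarrow c$. Your route costs one application of Zorn per level rather than a single one, but it buys a direct proof and a strictly stronger intermediate statement (exact attainment $u(w)=t$ for every $t<u(y)$), which is essentially what the paper's Lemma \ref{WWWW} must re-derive afterwards by passing to the limit of the chain furnished by Lemma \ref{aaaa}; with your sublemma that proof would conclude immediately. The point you flag as delicate --- that the limit $z$ is a lower bound for the whole totally ordered set $C$ and not just for the extracted sequence $\{w_n\}$ --- is handled by the standard dichotomy: any $w\in C$ either dominates some $w_n$ (then $w\succeq_{\delta}w_n\succeq_{\delta}z$ by transitivity), or is dominated by all $w_n$, in which case $u(w)=\inf_{C}u$ and $d(w_n,w)\leq\delta^{-1}(u(w_n)-u(w))\rightarrow 0$ forces $w=z$; the paper glosses over the same verification with ``it is easily seen''.
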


\begin{proof}
Assume the contrary that for every monotonically decreasing chain $\{y_k\}_{k=0}^{\infty}$ in $\mathcal{U}_{c}(y_{0})$ starting from $y_{0}$, the corresponding function values $\{u(y_{k})\} \nrightarrow c$.

Note that $\mathcal{U}_{c}(y_{0})$ constitutes a non-empty partially ordered set under the partial order $\succeq_{\delta}$. For any monotonically decreasing chain $\{y_{k}\}_{k=0}^{\infty} \subset \mathcal{U}_{c}(y_{0})$, the sequence $\{u(y_{k})\}$ is monotonically decreasing and has a lower bound. Hence, $\{u(y_{k})\}$ must be a Cauchy sequence. Moreover, from $u(y_{k-1})-u(y_{k}) \geq \delta d(y_{k-1},y_{k})$, we deduce that $\{y_{k}\}$ must also be a Cauchy sequence. By the completeness of $\mathcal{Y}$, there exists $y_{l} \in \mathcal{Y}$ such that $\lim_{k \rightarrow \infty}y_{k}= y_{l}$. By means of the continuity of $u$ and the given assumption, it is easy to verify that $y_{l}$ serves as a lower bound of the chain $\{y_{k}\}$ in $\mathcal{U}_{c}(y_{0})$.

Therefore, it is easily seen that every chain in $\mathcal{U}_{c}(y_{0})$ has a lower bound. By the Zorn's Lemma, there exists a minimal element $y_{*}$ in $\mathcal{U}_{c}(y_{0})$ such that for any element $y \in \mathcal{U}_{c}(y_{0})$, if $y_{*} \succeq_{\delta} y$, then $y=y_{*}$.

According to equality \eqref{kkk} and Lemma \ref{SSSSS}, there exists $y^{*} \in \partial B_{r^{'}}(y_{*})$ where $0<r^{'}<u(y_{*})-c$ such that
\begin{equation*}
u(y_{*})-u(y^{*}) \geq \delta d(y_{*},y^{*}).
\end{equation*}
By the 1-Lipschitz property of $u$, $u(y^{*}) \geq u(y_{*})-r^{'}>c$.
Thus, $y^{*} \in \mathcal{U}_{c}(y_{0}), \ y_{*} \succeq_{\delta} y^{*}$ and $y_{*} \neq y^{*}$. This contradicts the fact that $y_{*}$ is a minimal element in $\mathcal{U}_{c}(y_{0})$, thus we complete the proof.
\end{proof}

Let $u$ be a 1-Lipschitz function on a metric space $\mathcal{Y}$, for $c \in \mathbb{R}$, denote
\begin{equation*}
S_{c}=u^{-1}(-\infty,c]=\{y \in \mathcal{Y} \vert u(y) \leq c\}
\end{equation*}
and
\begin{equation*}
T_{c}=u^{-1}(c)=\{y \in \mathcal{Y} \vert u(y) = c\}
\end{equation*}

\begin{Lem}\label{WWWW}
Let $u$ be a 1-Lipschitz function on a complete unbounded length space $(\mathcal{Y},d)$. Suppose that, for any $y_{0} \in \mathcal{Y}$, there exists $r>0$ such that
\begin{equation*}
u(y_{0})=\inf_{y \in \partial B_{r}(y_{0})}\{u(y)+d(y,y_{0})\}.
\end{equation*}
Then for each $y_{0} \in \mathcal{Y},$ $u(y_{0})=c+d(y_{0},S_{c})$ for any $c<u(y_{0}).$ In other words, u is a dl$_{G}$-function.
\end{Lem}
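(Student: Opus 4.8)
The plan is to establish the two inequalities $u(y_{0}) \le c + d(y_{0},S_{c})$ and $u(y_{0}) \ge c + d(y_{0},S_{c})$ separately, for an arbitrary fixed $y_{0} \in \mathcal{Y}$ and $c < u(y_{0})$; their conjunction is precisely the dl$_{G}$ identity.

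The first inequality is the easy, purely Lipschitz direction. For any $y \in S_{c}$ we have $u(y) \le c$, so the $1$-Lipschitz property gives $u(y_{0}) - c \le u(y_{0}) - u(y) \le d(y_{0},y)$. Taking the infimum over $y \in S_{c}$ yields $u(y_{0}) - c \le d(y_{0},S_{c})$, that is, $u(y_{0}) \le c + d(y_{0},S_{c})$. (If $S_{c}$ were empty this holds vacuously with $d(y_{0},S_{c})=\infty$; the reverse bound will in fact exhibit points of $S_{c}$.)

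For the reverse inequality I would exploit Lemma \ref{aaaa}, whose hypothesis coincides with the present one. Fix any $\delta \in (0,1)$ and apply Lemma \ref{aaaa} to obtain a $\succeq_{\delta}$-decreasing chain $\{y_{k}\}_{k=0}^{\infty} \subset \mathcal{U}_{c}(y_{0})$ with first element $y_{0}$ and $u(y_{k}) \to c$. Since $y_{k-1} \succeq_{\delta} y_{k}$ means $u(y_{k-1}) - u(y_{k}) \ge \delta\, d(y_{k-1},y_{k})$, telescoping gives for every $N$
\begin{equation*}
\sum_{k=1}^{N} d(y_{k-1},y_{k}) \le \frac{1}{\delta}\sum_{k=1}^{N}\big(u(y_{k-1}) - u(y_{k})\big) = \frac{1}{\delta}\big(u(y_{0}) - u(y_{N})\big) \le \frac{1}{\delta}\big(u(y_{0}) - c\big).
\end{equation*}
In particular $\{y_{k}\}$ has summable consecutive distances, hence is Cauchy; by completeness it converges to some $y_{\infty}$, and continuity of $u$ forces $u(y_{\infty}) = c$, so $y_{\infty} \in S_{c}$. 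Letting $N \to \infty$ in $d(y_{0},y_{N}) \le \sum_{k=1}^{N} d(y_{k-1},y_{k})$ yields $d(y_{0},y_{\infty}) \le \tfrac{1}{\delta}\big(u(y_{0}) - c\big)$, and therefore $d(y_{0},S_{c}) \le \tfrac{1}{\delta}\big(u(y_{0}) - c\big)$.

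The remaining, and genuinely delicate, step is to remove the factor $1/\delta$. The estimate just obtained holds for every $\delta \in (0,1)$, while its left-hand side does not depend on $\delta$; so I would let $\delta \to 1^{-}$ to conclude $d(y_{0},S_{c}) \le u(y_{0}) - c$. Combined with the easy direction this gives $u(y_{0}) = c + d(y_{0},S_{c})$, as required. The main obstacle is exactly this loss of the sharp constant: for any single $\delta < 1$ the chain argument overestimates the distance by $1/\delta$, so the proof hinges on the fact that Lemma \ref{aaaa} is available simultaneously for all $\delta \in (0,1)$, which lets one recover the Lipschitz-sharp constant only in the limit $\delta \to 1^{-}$.
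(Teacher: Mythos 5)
Your proof is correct and takes essentially the same approach as the paper's: the forward inequality from the 1-Lipschitz property, and the reverse inequality by applying Lemma \ref{aaaa} for $\delta$ arbitrarily close to $1$, using completeness and continuity of $u$ to produce a limit point in the sublevel set at distance at most $\frac{1}{\delta}\bigl(u(y_{0})-c\bigr)$ from $y_{0}$. The only presentational difference is that the paper runs this as a contradiction argument with a single $\delta = 1-\frac{\sigma}{2d(y_{0},T_{c})}$ chosen against a hypothetical gap $\sigma$ (and works with the level set $T_{c}$ rather than $S_{c}$), whereas you state the bound for all $\delta\in(0,1)$ and let $\delta \to 1^{-}$; these are logically equivalent.
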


\begin{proof}
For any $z \in S_{c}$, $u(y_{0})-c \leq u(y_{0})-u(z) \leq d(y_{0},z)$ follows from the 1-Lipschitz property of $u$. By the arbitrariness of $z$,
\begin{equation*}
u(y_{0})-c \leq d(y_{0},S_{c}) \ \text{for all} \ c<u(y_{0}).
\end{equation*}
Since $d(y_{0},T_{c}) \geq d(y_{0},S_{c})$, it suffices to show that
\begin{equation*}
u(y_{0}) \geq d(y_{0},T_{c})+c \ \text{for all} \ c<u(y_{0}).
\end{equation*}

Suppose the contrary that there exist $c<u(y_{0})$ and $\sigma>0$ such that $u(y_{0})-c \leq d(y_{0},T_{c})-\sigma$. This implies
\begin{equation*}
\frac{u(y_{0})-c}{d(y_{0},T_{c})} \leq  1-\frac{\sigma}{d(y_{0},T_{c})}.
\end{equation*}
In the following we will prove that there exists $y^{c} \in T_{c}$ such that
\begin{equation*}
\frac{u(y_{0})-c}{d(y_{0},y^{c})} \geq  1-\frac{\sigma}{2d(y_{0},T_{c})},
\end{equation*}
and thus get a contradiction.

By Lemma  \ref{aaaa}, we know that there exists a sequence $\{y_{k}\} \subset \mathcal{Y}$ such that
\begin{equation*}
y_{k-1} \succeq_{\delta} y_{k} \ \text{and} \ \lim_{k \rightarrow \infty} u(y_{k}) =c,
\end{equation*}
where $\delta=1-\frac{\sigma}{2d(y_{0},T_{c})}>0$.

As demonstrated in Lemma \ref{aaaa}, $\{y_{k}\}$ must be a Cauchy sequence. By the completeness of $\mathcal{Y}$, there exists $y^{c} \in \mathcal{Y}$ such that $\lim_{k \rightarrow \infty}y_{k}= y^{c}$. The continuity of $u$ guarantees
\begin{equation*}
y^{c} \in T_{c} \ \text{and} \ y_{0}\succeq_{\delta}y^{c}.
\end{equation*}
This indicates that
\begin{equation*}
y^{c} \in T_{c} \ \text{and} \ \frac{u(y_{0})-c}{d(y_{0},y^{c})} \geq  1-\frac{\sigma}{2d(y_{0},T_{c})}.
\end{equation*}
As a result, the lemma holds.
\end{proof}

\begin{Cor}\label{SSSS}
Let $u$ be a 1-Lipschitz function on a complete unbounded length space $(\mathcal{Y},d)$. If for any $y_{0} \in \mathcal{Y}$, there exists $r_{0}>0$ such that
\begin{equation*}
u(y_{0})=\inf_{y \in \partial B_{r_{0}}(y_{0})}\{u(y)+d(y,y_{0})\}.
\end{equation*}
Then for any $r>0$, we have
\begin{equation*}
u(y_{0})=\inf_{y \in \partial B_{r}(y_{0})}\{u(y)+d(y,y_{0})\}.
\end{equation*}
\end{Cor}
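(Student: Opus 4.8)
The plan is to establish the two inequalities separately. The inequality $u(y_0) \le \inf_{y \in \partial B_r(y_0)}\{u(y)+d(y,y_0)\}$ holds for every $r>0$ with no hypotheses beyond 1-Lipschitzness: for any $y$ with $d(y,y_0)=r$ one has $u(y_0)-u(y)\le d(y_0,y)$, and taking the infimum gives the claim. So the entire content is the reverse inequality, and here I would \emph{not} split into the cases $r<r_0$ and $r\ge r_0$ (Lemma \ref{SSSSS} already covers the former); instead I would give one argument valid for all $r>0$ at once, whose only input is that $u$ is a dl$_{G}$-function. That input is exactly the conclusion of Lemma \ref{WWWW}, whose hypotheses coincide with those of the present statement.

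Concretely, by Lemma \ref{WWWW} the function $u$ is a dl$_{G}$-function, so for every $c<u(y_0)$ the sublevel set $S_c=u^{-1}(-\infty,c]$ is non-empty and $d(y_0,S_c)=u(y_0)-c$. Fix $r>0$ and $\varepsilon>0$. I would choose $c<u(y_0)$ with $u(y_0)-c>r$ (possible since $u(y_0)-c\to\infty$ as $c\to-\infty$), and pick $z\in S_c$ with $d(y_0,z)\le d(y_0,S_c)+\varepsilon=u(y_0)-c+\varepsilon$; note $d(y_0,z)\ge d(y_0,S_c)>r$. Since $\mathcal{Y}$ is a length space, there is a curve $\gamma$ from $y_0$ to $z$ with $L(\gamma)\le d(y_0,z)+\varepsilon$, and because $t\mapsto d(y_0,\gamma(t))$ is continuous and runs from $0$ to $d(y_0,z)>r$, the intermediate value theorem yields a point $y'\in\gamma\cap\partial B_r(y_0)$.

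It remains to estimate $u(y')+d(y',y_0)$. Writing $u(y_0)-u(y')=(u(y_0)-u(z))-(u(y')-u(z))$, I would bound $u(y_0)-u(z)\ge u(y_0)-c$ (since $u(z)\le c$) and $u(y')-u(z)\le d(y',z)$ by 1-Lipschitzness. The portion of $\gamma$ from $y_0$ to $y'$ has length at least $d(y_0,y')=r$, so $d(y',z)\le L(\gamma)-r\le u(y_0)-c-r+2\varepsilon$. Combining, $u(y_0)-u(y')\ge r-2\varepsilon$, whence $u(y')+d(y',y_0)=u(y')+r\le u(y_0)+2\varepsilon$. Thus $\inf_{y\in\partial B_r(y_0)}\{u(y)+d(y,y_0)\}\le u(y_0)+2\varepsilon$ for every $\varepsilon>0$, which together with the trivial inequality gives the desired equality.

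The main obstacle, and the only place where the hypotheses are genuinely used, is producing a point on the sphere $\partial B_r(y_0)$ at which $u$ has decreased by essentially the full amount $r$. In a bare length space one cannot invoke honest geodesics, so the argument must proceed through near-minimizing curves together with the intermediate value theorem; the dl$_{G}$ identity $d(y_0,S_c)=u(y_0)-c$ from Lemma \ref{WWWW} is what guarantees that such near-minimizing curves head toward a low sublevel set at the maximal possible rate, so that the loss of $u$ up to the sphere is at least $r-O(\varepsilon)$. Care is also needed to ensure $S_c\neq\emptyset$, which is again automatic once $u$ is known to be a dl$_{G}$-function.
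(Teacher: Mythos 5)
Your proof is correct and follows essentially the same route as the paper: both rest on Lemma \ref{WWWW} to obtain the dl$_{G}$ identity $d(y_{0},S_{c})=u(y_{0})-c$, and then send a near-minimizing curve from $y_{0}$ toward the sublevel set $S_{c}$, intersect it with $\partial B_{r}(y_{0})$, and use 1-Lipschitzness plus the length bound. The only differences are organizational: the paper splits into the cases $r<r_{0}$ (handled by Lemma \ref{SSSSS}) and $r\geq r_{0}$ (argued by contradiction with the choice $c=u(y_{0})-2r$, deferring details to the proof of Lemma \ref{SSSSS}), whereas you give one direct estimate valid for all $r>0$ at once, which is slightly cleaner but not a genuinely different argument.
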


\begin{proof}
By Lemma \ref{SSSSS}, it suffices to analyze the case where $r_{0}<r$. Consider the set $S_{*}=\{y \in \mathcal{Y} \vert u(y_{0})-u(y) \geq 2r\},$ according to Lemma \ref{WWWW}, we have $u(y_{0})=d(y_{0},S_{*})+u(y_{0})-2r.$ It follows that $d(y,y_{0}) \geq 2r$ for all $y \in S_{*}$. If $u(y_{0})=\inf_{y \in \partial B_{r}(y_{0})}\{u(y)+d(y,y_{0})\}$ does not hold, a similar contradiction can be obtained as in the proof of Lemma \ref{SSSSS}. Hence, we complete the proof.
\end{proof}

The next Lemma is a straight corollary to \cite[Theorem 1.2]{LQ} and \cite[Theorem 5.5]{QZ}.
\begin{Lem}\label{local}
Let $(\mathcal{Y},d)$ be a complete length space and $\Omega \subset \mathcal{Y}$ be a open set. A function $u:\Omega \rightarrow \mathbb{R}$ is a metric viscosity solution if and only if:
\begin{itemize}
\item[(i)]{$u$ is 1-Lipschitz;}
\item[(ii)]{For any $y \in \Omega$, there exists $r>0$ such that
\begin{equation}\label{viscosity}
u(y)=\inf_{x \in \partial B_{r}(y)} \{u(x)+d(x,y)\}.
\end{equation}}
\end{itemize}
\end{Lem}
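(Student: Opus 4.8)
The plan is to read off Lemma \ref{local} from the two cited results after matching conventions, and to indicate the short self-contained argument that underlies them. Recall that, by definition, $u$ is a metric viscosity solution precisely when $u$ is locally Lipschitz and the local slope satisfies $|\partial u|(y)=1$ at every $y\in\Omega$; so the task is to show that, on a complete length space, this pair of properties is equivalent to the pair (i)--(ii). The reference \cite[Theorem~5.5]{QZ} supplies the equivalence between the local-slope formulation and the Monge-solution formulation of the eikonal equation, while \cite[Theorem~1.2]{LQ}, specialized to $f\equiv 1$, characterizes Monge solutions of \eqref{eikonal} exactly through the global $1$-Lipschitz bound together with a dynamic-programming (local optimality) identity of the form \eqref{viscosity}. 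Thus the bulk of the proof is bookkeeping: reconciling their general $f$ with our $f\equiv 1$, and checking that the length-space hypothesis used here meets the standing assumptions of those theorems.

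For a reader who prefers a direct route, I would argue the two implications separately. For (i)+(ii)$\Rightarrow$solution, the bound $|\partial u|(y)\le 1$ is immediate from $1$-Lipschitzness, while the reverse bound $|\partial u|(y)\ge 1$ is extracted from \eqref{viscosity}: pick $x\in\partial B_r(y)$ nearly attaining the infimum, join $y$ to $x$ by a curve $\gamma$ of length at most $d(x,y)+\epsilon$ (length space), and evaluate at an interior point $z=\gamma(t)$ with small $t$; a short estimate shows $\frac{(u(y)-u(z))^{+}}{d(y,z)}\ge 1-O(\epsilon/d(y,z))$, so sending the cut-off distance to zero gives $|\partial u|(y)=1$.

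For the converse, solution$\Rightarrow$(i): since $u$ is locally Lipschitz it is absolutely continuous along any rectifiable curve, so parametrizing a near-minimizing curve from $x$ to $y$ by arclength I would write $u(x)-u(y)=\int-(u\circ\gamma)'$; on the set where $u\circ\gamma$ decreases the integrand is bounded by $|\partial u|(\gamma(t))=1$, while the increasing set only lowers the total, whence $u(x)-u(y)\le\mathrm{length}(\gamma)\le d(x,y)+\epsilon$, giving $1$-Lipschitzness after symmetrizing. Finally solution$\Rightarrow$(ii): the inequality $u(y)\le\inf_{x\in\partial B_r(y)}\{u(x)+d(x,y)\}$ is again Lipschitzness, and the matching lower bound is produced by running an approximate steepest-descent chain out of $y$ — using $|\partial u|=1$ at each step to descend at rate arbitrarily close to $1$ — until it reaches distance $r$, completeness turning the resulting Cauchy chain into a genuine point of $\partial B_r(y)$ that nearly realizes the infimum. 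Once (ii) holds for one radius, Corollary \ref{SSSS} promotes it to all radii, matching the ``there exists $r$'' phrasing.

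The main obstacle is this last step, the lower bound in (ii): it is exactly the delicate point where one must manufacture a descent curve reaching the prescribed sphere from the pointwise slope condition, and it is precisely the content that \cite[Theorem~1.2]{LQ} packages (and that reappears, under the hypothesis (ii), in Lemma \ref{aaaa} and Theorem \ref{CC}). To avoid any circularity with the later Zorn-type construction, I would therefore either cite \cite[Theorem~1.2]{LQ} directly for this implication or carry out the chain argument \emph{ab initio}, invoking only $|\partial u|=1$ and the completeness of $\mathcal{Y}$.
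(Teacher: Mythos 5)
Your proposal takes essentially the same approach as the paper: the paper's entire proof of Lemma \ref{local} is the single remark that it is ``a straight corollary to \cite[Theorem 1.2]{LQ} and \cite[Theorem 5.5]{QZ}'', which is exactly your primary route of matching conventions and specializing $f\equiv 1$. Your supplementary direct sketch --- including correctly flagging the descent-to-the-sphere lower bound in \eqref{viscosity} as the one delicate step --- goes beyond anything the paper records, but since you ultimately defer that step to the same citation, there is no real divergence in method.
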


\begin{proof}[Proof of Theorem \ref{VS}]
By Corollary \ref{SSSS} and Lemma \ref{local}, $(i) \Leftrightarrow (iii),$ and thus we only need to prove that $(i)\Leftrightarrow(ii)$.

$(i) \Rightarrow (ii)$. The conclusion follows from Lemmas \ref{WWWW} and \ref{local}.

$(ii) \Rightarrow (i)$. For any $y_{0} \in \Omega$ and $c<u(y_{0})$, let $0<r<u(y_{0})-c$. Then there exist $y_{n} \in S_{c}=u^{-1}(-\infty,c]$ (where $y_{n} \notin B_{r}(y_{0})$ due to the 1-Lipschitz property of $u$) and a continuous curve  $\gamma_{n}$ connecting $y_{0}$ to $y_{n}$ such that $u(y_{0}) \geq d(y_{n},y_{0})+c-\frac{1}{2n}$ and $L(\gamma_n) \leq d(y_0,y_n)+\frac{1}{2n}$. We have
\begin{equation*}
\begin{aligned}
u(y_{0}) & \geq d(y_{n},y_{0})+c-\frac{1}{2n}\\
& \geq L(\gamma_n)+c-\frac{1}{n}\\
& \geq  d(y_{0},y^{n})+d(y^{n},y_{n})+u(y^{n})+c-u(y^{n})-\frac{1}{n}\\
& \geq d(y_{0},y^{n})+u(y^{n})-\frac{1}{n},
\end{aligned}
\end{equation*}
where $y^{n} \in \partial B_{r}(y_{0})\cap \gamma_{n}$. As $n$ goes to infinity, we have $u(y_0) \geq \inf_{x \in \partial B_r(y_0)}\{u(x)+d(x,y_0)\}$, and thus \eqref{viscosity} holds due to the 1-Lipschitz property of $u$. By Lemma \ref{local}, we obtain the conclusion.
\end{proof}

\begin{Cor}\label{DLG}
Let $(\mathcal{Y},d)$ be a complete unbounded length space. Then any metric viscosity solution and dl$_{G}$-function must be a $dl_{C}$-function.
\end{Cor}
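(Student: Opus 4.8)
The plan is to reduce both assertions to the single implication ``every dl$_{G}$-function is a dl$_{C}$-function''. A metric viscosity solution is locally Lipschitz and a dl$_{G}$-function is $1$-Lipschitz, so both are continuous; hence Theorem \ref{VS} applies, and its implication $(i)\Rightarrow(ii)$ shows that on a complete unbounded length space every metric viscosity solution is a dl$_{G}$-function. Thus, once the implication dl$_{G}\Rightarrow$ dl$_{C}$ is established, the metric-viscosity case follows immediately by composing the two facts, and it suffices to treat a general dl$_{G}$-function $u$.

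To prove this remaining implication, I would work directly with the sublevel sets $S_{c}=u^{-1}(-\infty,c]$ and propose the escaping sequence $H_{n}:=S_{-n}$. First I would record the two required structural properties of these sets. Each $S_{c}$ is closed because $u$ is continuous, and each $S_{c}$ is non-empty: were $S_{c}=\varnothing$, then $u>c$ everywhere, so choosing any $y$ (for which $c<u(y)$ is an admissible level) and applying the dl$_{G}$ relation would give $u(y)=c+d(y,S_{c})=c+\infty=\infty$ under the convention $d(\cdot,\varnothing)=\infty$, a contradiction. This guarantees that $\{H_{n}\}$ is a legitimate sequence of non-empty closed sets.

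The core computation is then the dl$_{G}$ identity itself: for every $y$ and every $c<u(y)$ one has $d(y,S_{c})=u(y)-c$. Taking $c=-n$, for each fixed $y$ and all $n>-u(y)$ this reads $d(y,H_{n})=u(y)+n$. Choosing the constants $c_{n}:=n$ yields $d(y,H_{n})-c_{n}=u(y)$ for all sufficiently large $n$, so the sequence is eventually constant and in particular $\lim_{n\to\infty}[d(y,H_{n})-c_{n}]=u(y)$, which is exactly the defining relation of a dl$_{C}$-function. The same identity applied at any fixed basepoint $y_{0}$ gives $d(y_{0},H_{n})=u(y_{0})+n\to\infty$, confirming the escaping condition required in Definition \ref{defn1}.

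I do not anticipate a genuine obstacle: the real content is simply that the dl$_{G}$ relation $u(y)=c+d(y,S_{c})$ is already in dl$_{C}$ form once one reads the sublevel sets $S_{-n}$ as the sets $H_{n}$. The only points demanding care are the non-emptiness of the sublevel sets, argued via the $d(\cdot,\varnothing)=\infty$ convention, and the bookkeeping of signs that fixes the constants as $c_{n}=n$ rather than $-n$. Both are routine verifications rather than substantive difficulties.
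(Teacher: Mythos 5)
Your proposal is correct and follows essentially the same route as the paper: reduce to the implication dl$_{G}\Rightarrow$ dl$_{C}$ via Theorem \ref{VS}, then take $H_{n}=u^{-1}(-\infty,-n]$ with constants $c_{n}=n$ and read the dl$_{G}$ identity as the dl$_{C}$ limit. Your explicit check that the sublevel sets are non-empty (via the $d(\cdot,\varnothing)=\infty$ convention) is a small point the paper leaves implicit, but otherwise the arguments coincide.
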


\begin{proof}
By Theorem \ref{VS}, we only need to prove that any dl$_{G}$-function $u$ must be a $dl_{C}$-function. Let $H_n=u^{-1}(-\infty,-n]$, for any $y_0 \in \mathcal{Y}$, by the 1-Lipschitz property of $u$, we have
\begin{equation*}
d(y_0,H_n) \geq u(y_0)+n \rightarrow \infty \ \text{as} \ n \rightarrow \infty.
\end{equation*}
For each $y \in \mathcal{Y}$
\begin{equation}\label{DL}
u(y)=\lim_{n \rightarrow \infty}[d(y,H_n)-n].
\end{equation}
This completes the proof of the corollary.

\end{proof}

If $\mathcal{Y}$ is locally compact, the infimum in equality \eqref{viscosity} can be attained. This indicates that for any $y \in \mathcal{Y}$ there exists at least one negative gradient curve of $u$ initiated from $y$, also known as a calibrated curve.
Thus, the following corollary holds.

\begin{Cor}\label{The1.7}
Let $(\mathcal{M},g)$ be a complete Riemannian manifold and $\Omega \subset \mathcal{M}$ be an open set, a function $u: \Omega \rightarrow \mathbb{R}$ is a metric viscosity solution if and only if it is a classical viscosity solution of \eqref{eikonal}.
\end{Cor}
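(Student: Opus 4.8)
The plan is to run everything through Lemma \ref{local}, which requires only a complete length space and therefore applies verbatim to $(\mathcal{M},g)$ and any open $\Omega$ (in particular, without any unboundedness hypothesis, so I deliberately route through Lemma \ref{local} rather than Theorem \ref{VS}). By that lemma, $u$ is a metric viscosity solution on $\Omega$ if and only if $u$ is 1-Lipschitz and, for every $y \in \Omega$, there is $r>0$ with $u(y)=\inf_{x \in \partial B_{r}(y)}\{u(x)+d(x,y)\}$. Hence it suffices to show that this pair of conditions is equivalent to $u$ being a classical viscosity solution of $|\triangledown u|_g=1$. I would record first that $(\mathcal{M},g)$, being complete, is a complete, locally compact geodesic space, so every closed ball is compact and minimizing geodesics between nearby points exist; this is exactly what lets the infimum in \eqref{viscosity} be attained, and I would then treat the subsolution and supersolution halves separately.

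First I would handle the subsolution/Lipschitz equivalence. On a Riemannian manifold the 1-Lipschitz property with respect to the geodesic distance $d$ is precisely equivalent to $u$ being a viscosity subsolution of $|\triangledown u|_g=1$: a 1-Lipschitz function satisfies $|\triangledown \phi(y)|_g \le 1$ at every point where a smooth $\phi$ touches it from above, and conversely the subsolution inequality forces the local, hence global-on-components, Lipschitz bound. This matches the remark in the introduction that the 1-Lipschitz property already guarantees the subsolution property, so this half is standard.

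Next I would match the remaining metric condition with the supersolution property. Starting from a metric viscosity solution, local compactness makes the infimum in \eqref{viscosity} attained along a minimizing geodesic $\gamma$ issuing from $y$; using the 1-Lipschitz bound on successive subarcs together with the endpoint equality $u(\gamma(0))-u(\gamma(r))=r$ forces $u(\gamma(0))-u(\gamma(t))=t=d(\gamma(0),\gamma(t))$ for all $t\in[0,r]$, so $\gamma$ is a negative gradient (calibrated) curve of $u$. As recalled in the introduction and established in \cite{CX1,CX2,Liang}, the existence of such a calibrated curve through each point is exactly what yields the viscosity supersolution inequality $|\triangledown \phi(y)|_g \ge 1$ for smooth $\phi$ touching $u$ from below. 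Conversely, if $u$ is a classical supersolution, the optimality (dynamic programming) principle for the eikonal equation on a geodesic space returns the local identity $u(y)=\inf_{x \in \partial B_{r}(y)}\{u(x)+d(x,y)\}$ for small $r$. Combining the two halves with Lemma \ref{local} gives the equivalence in both directions.

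The main obstacle is the clean translation between the metric optimality principle \eqref{viscosity} and the differential supersolution condition: one must verify that a minimizing geodesic realizing the infimum is genuinely a calibrated curve, and, in the reverse direction, that the viscosity supersolution inequality propagates the local optimality principle back. Both steps rely essentially on completeness and local compactness, which guarantee existence of minimizing geodesics and attainment of the infimum, and I would invoke \cite{CX1,CX2,Liang} for the manifold-specific passage between calibrated curves and the supersolution property rather than reprove it here.
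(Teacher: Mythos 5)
Your proposal is correct and follows essentially the same route as the paper: the paper likewise reduces the corollary to Lemma \ref{local} (1-Lipschitz plus the local optimality principle \eqref{viscosity}), uses local compactness of the manifold to attain the infimum and thereby produce a calibrated (negative gradient) curve from each point, and then invokes the classical manifold facts from \cite{CX1,CX2,Liang} --- 1-Lipschitz gives the subsolution property, calibrated curves give the supersolution property, and conversely classical viscosity solutions satisfy the local representation formula. Your write-up is in fact slightly more explicit than the paper's (which states the attainment-implies-calibration step without the subarc argument you spell out), but the underlying argument is the same.
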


\begin{Rem}
From Corollary \ref{The1.7}, generally speaking, the concept of the metric viscosity solution in our study differs from the definition of the curve-based solution of \eqref{eikonal} in \cite{YG}, where a classical viscosity solution (classical solution) of \eqref{eikonal} may not necessarily be a curve-based solution of \eqref{eikonal}, see \cite[Example 2.4]{YG}.
\end{Rem}

\begin{proof}[Proof of Theorem \ref{CC}]
By Corollary \ref{SSSS}, it follows that
\begin{equation*}
u(y_0) = \inf_{y \in \partial B_1(y_0)} \{ u(y) + 1 \}.
\end{equation*}
This means that for any $\varepsilon>0$, there exists $y_1 \in \partial B_1(y_0)$ such that
\begin{equation}\label{calibrated}
u(y_0) - u(y_1) \geq d(y_0,y_1) - \frac{\varepsilon}{8}.
\end{equation}
Consider a continuous curve $\gamma^1:[0,1] \to \mathcal{Y}$ connecting $y_0$ to $y_1$ such that $L(\gamma^{1}) \leq d(y_0,y_1)+\frac{\varepsilon}{8}$. We claim that for all $t \in [0,1]$,
\begin{equation*}
u(y_0) - u(\gamma^1(t)) \geq d(y_0,\gamma^1(t)) - \frac{\varepsilon}{4}.
\end{equation*}
For a contradiction, let us assume that there exists $t_0 \in (0,1)$ such that
\begin{equation*}
u(y_0) - u(\gamma^1({t_0})) < d(y_0,\gamma^1(t_0)) - \frac{\varepsilon}{4}.
\end{equation*}
Then we have
\begin{equation*}
\begin{aligned}
u(y_0) - u(y_1) &= u(y_0) - u(\gamma^1({t_0})) + u(\gamma^1({t_0})) - u(y_1) \\
&< d(y_0,\gamma^1(t_0))- \frac{\varepsilon}{4} + d(\gamma^1({t_0}), y_1) \\
&\leq L(\gamma^1)-\frac{\varepsilon}{4}\\
& \leq d(y_0,y_1)-\frac{\varepsilon}{8}.
\end{aligned}
\end{equation*}
This contradicts inequality \eqref{calibrated}, and thus the claim holds.

For any $0 \leq t_1 \leq t_2 \leq 1$, using the 1-Lipschitz property of $u$, we have
\begin{equation}\label{ww}
\begin{aligned}
u(\gamma^1({t_1})) - u(\gamma^1({t_2})) &= (u(y_0) - u(\gamma^1({t_2}))) - (u(y_0) - u(\gamma^1({t_1}))) \\
&\geq d(y_0, \gamma^1({t_2})) - \frac{\varepsilon}{4} - d(y_0,\gamma^1({t_1})) \\
& \geq L(\gamma^1\mid_{[0,t_2]})-\frac{\varepsilon}{8}-\frac{\varepsilon}{4}-L(\gamma^1\mid_{[0,t_1]})\\
&\geq  d(\gamma^1({t_1}), \gamma^1({t_2})) - \frac{\varepsilon}{2}. \\
\end{aligned}
\end{equation}
Similarly, for each $n \in \mathbb{N}$, there exists a continuous curve $\gamma^n:[n-1, n] \to \mathcal{Y}$ connecting $y_{n-1}$ to $y_n$ (recall that $d(y_{n-1},y_n)=1$) such that
\begin{equation*}
u(\gamma^n({t_1})) - u(\gamma^n({t_2})) \geq d(\gamma^n(t_1),\gamma^n(t_2)) - \frac{\varepsilon}{2^n}
\end{equation*}
for all $n-1 \leq t_1 \leq t_2 \leq n$.
We can construct a continuous curve $\gamma:[0,\infty) \to \mathcal{Y}$ by
\begin{equation*}
\gamma(t) = \gamma^n(t) \text{ when } t \in [n-1, n) \ \text{for all} \ t \geq 0.
\end{equation*}

For any $0 \leq t_1 \leq t_2$, assume without loss of generality that $t_1 \in [n_1-1, n_1)$ and $t_2 \in [n_2-1, n_2)$. We have
\begin{equation*}
\begin{aligned}
&u(\gamma({t_1})) - u(\gamma({t_2})) \\
=& u(\gamma^{n_1}({t_1})) - u(\gamma^{n_1}({n_1}))+ \sum_{k=n_1+1}^{n_2-1} (u(\gamma^k({k-1}))- u(\gamma^k({k})))\\
& + u(\gamma^{n_2}({n_2-1})) - u(\gamma^{n_2}({t_2})) \\
\geq & d(\gamma^{n_1}(n_1),\gamma^{n_1}(t_1)) - \frac{\varepsilon}{2^{n_1}} + \sum_{k=n_1+1}^{n_2-1} \left(d(\gamma^k({k-1}), \gamma^k({k})) - \frac{\varepsilon}{2^{k}}\right) \\
&+ d(\gamma^{n_2}({n_2-1}),\gamma^{n_2}({t_2})) - \frac{\varepsilon}{2^{n_2}} \\
\geq & d(\gamma(t_2),\gamma(t_1))-\varepsilon.
\end{aligned}
\end{equation*}
The limit $\lim_{t \rightarrow \infty} d(\gamma(0),\gamma(t))=\infty$ follows from the 1-Lipschitz property of $u$ and $\lim_{t \rightarrow \infty}u(\gamma(t)) =-\infty$. Thus, the desired conclusion follows.
\end{proof}

\begin{Rem}
Lemma \ref{SSSSS}, Corollary \ref{SSSS} and Theorem \ref{CC} can be proven using the result of \cite{LQ}. For reader's convenience, we propose the proofs based on our discussion.
\end{Rem}

\begin{Pro}
Let $(\mathcal{Y},d)$ be a complete length space and $\Omega \subset \mathcal{Y}$ be an open set. Let $\mathcal{S}$ be a non-empty collection of metric viscosity solutions on $\Omega$, then the function $u(y)=\inf\{f(y):f \in \mathcal{S}\}$ (if it makes sense) is also a metric viscosity solution.
\end{Pro}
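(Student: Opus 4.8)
The plan is to verify the two conditions of Lemma \ref{local} directly for $u(y)=\inf\{f(y):f\in\mathcal{S}\}$, rather than working with the local slope $|\partial u|$. I read the phrase ``if it makes sense'' as the standing hypothesis that the infimum is finite at every point, i.e. $u:\Omega\to\mathbb{R}$ is a genuine real-valued function; this is exactly what is needed for $u$ to be a candidate solution.

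First I would check that $u$ is $1$-Lipschitz. Each $f\in\mathcal{S}$ is $1$-Lipschitz by Lemma \ref{local}, so for any $y_1,y_2\in\Omega$ and any $f$ we have $f(y_1)\le f(y_2)+d(y_1,y_2)$. Taking the infimum over $f$ on both sides gives $u(y_1)\le u(y_2)+d(y_1,y_2)$, and by symmetry $|u(y_1)-u(y_2)|\le d(y_1,y_2)$; in particular $u$ is continuous. Next I would establish, for each $y\in\Omega$ and each $r>0$, the sphere identity
\[
u(y)=\inf_{x\in\partial B_r(y)}\{u(x)+d(x,y)\}.
\]
The inequality ``$\le$'' is immediate from the $1$-Lipschitz bound $u(y)\le u(x)+d(x,y)$, valid for every $x$. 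For ``$\ge$'' I would use that each $f\in\mathcal{S}$ is a metric viscosity solution, so by Corollary \ref{SSSS} it satisfies $f(y)=\inf_{x\in\partial B_r(y)}\{f(x)+d(x,y)\}$ at this same radius $r$. Since $f\ge u$ pointwise,
\[
f(y)=\inf_{x\in\partial B_r(y)}\{f(x)+d(x,y)\}\ge\inf_{x\in\partial B_r(y)}\{u(x)+d(x,y)\}.
\]
Taking the infimum over $f\in\mathcal{S}$ then yields $u(y)\ge\inf_{x\in\partial B_r(y)}\{u(x)+d(x,y)\}$, giving equality. With both conditions in hand, Lemma \ref{local} identifies $u$ as a metric viscosity solution.

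The one point that needs care — and the only genuine obstacle — is that the radius appearing in the definition of a metric viscosity solution is only asserted to \emph{exist}, and could a priori depend on the particular $f$ and $y$. If different members of $\mathcal{S}$ forced different radii there would be no common sphere $\partial B_r(y)$ on which to compare them, and the infimum-over-$f$ step would break down. This is precisely what Corollary \ref{SSSS} removes: for any metric viscosity solution the identity holds simultaneously for \emph{all} $r>0$, so I am free to fix a single radius $r$ and run the comparison uniformly in $f$. Beyond this, the only hypothesis invoked is the finiteness of the infimum, packaged into ``if it makes sense,'' which guarantees $u$ is $\mathbb{R}$-valued and hence that the $1$-Lipschitz estimate above is meaningful.
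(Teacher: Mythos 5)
Your overall architecture---prove $u$ is $1$-Lipschitz, establish the sphere identity at a radius common to all $f\in\mathcal S$, compare under the infimum, and conclude by Lemma \ref{local}---is sound, and the Lipschitz step and the two-sided comparison on a fixed sphere are correct. The gap sits exactly at the point you yourself single out as the crux: the appeal to Corollary \ref{SSSS}. That corollary is stated (and proved, via Lemma \ref{WWWW}) for a $1$-Lipschitz function defined on \emph{all} of a complete \emph{unbounded} length space $\mathcal{Y}$; its proof passes through the global sublevel sets $S_{*}=\{y:\ u(y_0)-u(y)\geq 2r\}$, which live in the whole space. In the proposition, $\mathcal{Y}$ is not assumed unbounded and, more importantly, each $f\in\mathcal S$ is defined only on the open set $\Omega$. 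For a proper open subset the conclusion you import---the sphere identity ``for every $r>0$''---is simply false: take $\mathcal{Y}=\mathbb{R}$, $\Omega=(0,1)$, $f(x)=x$; this is a metric viscosity solution on $\Omega$, yet for $y=1/2$ and $r=2$ the sphere $\partial B_r(y)=\{-3/2,\,5/2\}$ lies entirely outside $\Omega$, so the right-hand side of your identity is not even defined (intersecting with $\Omega$ gives the empty set). Hence the step ``fix a single radius $r$ and run the comparison uniformly in $f$'' is precisely what remains unproved in the stated generality.

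What is true, and what your argument actually needs, is a localized uniform-radius statement: for every metric viscosity solution $f$ on $\Omega$ and every $y\in\Omega$, the identity $f(y)=\inf_{x\in\partial B_r(y)}\{f(x)+d(x,y)\}$ holds for \emph{all} $0<r<d(y,\mathcal{Y}\setminus\Omega)$ (all $r>0$ when $\Omega=\mathcal{Y}$ is unbounded). This radius range is independent of $f$, but the statement is not in the paper and must be proved, e.g.\ by rerunning the Zorn-type descent argument of Lemma \ref{aaaa} inside the closed ball $\bar B_r(y)\subset\Omega$: chains for $\succeq_{\delta}$ stay in the ball and converge there, and a minimal element at distance $<r$ from $y$ can be pushed strictly further by applying Lemma \ref{local} (and Lemma \ref{SSSSS}) at that point, so the minimal element must lie on $\partial B_r(y)$, which yields the identity. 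With that supplied, your proof closes; in the special case $\Omega=\mathcal{Y}$ unbounded it is complete as written. For comparison, the paper's own proof avoids spheres altogether: it bounds the local slope $|\partial u|(y)$ from below directly by choosing, for each nearby $x$, some $f_x\in\mathcal S$ with $f_x(y)<u(y)+d^2(x,y)$, so the approximation error is quadratic in $d(x,y)$ and disappears in the slope quotient; note, however, that justifying its final equality also requires descent points of $f_x$ at prescribed scales, so the uniform-radius fact is the real mathematical content on either route.
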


\begin{proof}
For any $x,y \in \Omega$ and $\varepsilon>0$, there exists $f \in \mathcal{S}$ such that $u(y)>f(y)-\varepsilon$.
According to Lemma \ref{local}, $f$ is 1-Lipschitz, we have
\begin{equation*}
u(x)-u(y)<f(x)-f(y)+\varepsilon \leq d(x,y)+\varepsilon.
\end{equation*}
By the arbitrariness of $\varepsilon$, it follows that $u$ is 1-Lipschitz.
On the other hand, for any $y \in \Omega$ and $x \in \Omega$ $(x \neq y)$, there exists $f_x \in \mathcal{S}$ such that
\begin{equation*}
f_x(y)<u(y)+d^2(x,y).
\end{equation*}
So we obtain
\begin{equation*}
\begin{aligned}
|\partial u|(y)=&\limsup_{x \rightarrow y}\frac{(u(y)-u(x))^+}{d(y,x)}\\
\geq &\limsup_{x \rightarrow y}\frac{(f_x(y)-d^2(x,y)-f_x(x))^+}{d(y,x)}\\
\geq & \limsup_{x \rightarrow y} \left[\frac{(f_x(y)-f_x(x))^+}{d(y,x)}-d(y,x) \right]\\
=&1.
\end{aligned}
\end{equation*}
By the 1-Lipschitz property of $u$ we have $|\partial u|(y) \leq1$, and thus $|\partial u|(y)=1$, it follows that $u$ is a metric viscosity solution.
\end{proof}

In \cite{LA1}, let $(\mathcal{Y},d)$ be a length space and $\Omega \subset \mathcal{Y}$ be an open set, the global slope of a function $u: \Omega \rightarrow \mathbb{R}$ is defined as
\begin{equation*}
l_u(y):=\sup_{x \in \Omega, x \neq y} \frac{(u(y)-u(x))^{+}}{d(y,x)}.
\end{equation*}
Generally speaking, if a continuous function $u$ is a metric viscosity solution on an open set $\Omega$, then $l_u(y)=1 \ \text{for all} \ y \in \Omega$. Conversely, this is not always true, as indicated by the following simple example.
\begin{ex}
Let
\[
u(x)=\begin{cases}
0 & \text{when} \ x \in (-\infty,0]\\
x & \text{when} \ x \in (0,\infty).
\end{cases}
\]
In this case, it is easily seen that $l_u(x)=1$ for all $x \in \mathbb{R}$ but it is not a metric viscosity solution on $\mathbb{R}$.
\end{ex}

However, if $\Omega$ is bounded, the following proposition holds.
\begin{Pro}
Let $(\mathcal{Y},d)$ be a complete length space and $\Omega \subset \mathcal{Y}$ be a bounded open set. A continuous function $u: \Omega \rightarrow \mathbb{R}$ is a metric viscosity solution if and only if for all $y \in \Omega$
\begin{equation}\label{slope1}
l_u(y)=1.
\end{equation}
\end{Pro}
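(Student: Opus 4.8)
The plan is to prove the two implications separately, noting that only the ``if'' direction actually uses boundedness of $\Omega$. For the ``only if'' direction, suppose $u$ is a metric viscosity solution. By Lemma \ref{local} it is $1$-Lipschitz, so $(u(y)-u(x))^{+}\le d(y,x)$ for every $x$, giving $l_u(y)\le 1$. On the other hand the local slope never exceeds the global slope, since $|\partial u|(y)=\inf_{\rho>0}\sup_{0<d(x,y)<\rho}\frac{(u(y)-u(x))^{+}}{d(y,x)}\le l_u(y)$; as $|\partial u|(y)=1$ by the definition of a metric viscosity solution, we conclude $l_u(y)=1$. No boundedness is needed here.

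For the ``if'' direction, assume $l_u(y)=1$ for all $y\in\Omega$. First I would recover the $1$-Lipschitz property: $l_u(y)\le 1$ for every $y$ means $u(y)-u(x)\le d(x,y)$ for all $x,y\in\Omega$, and swapping the roles of $x$ and $y$ yields $|u(x)-u(y)|\le d(x,y)$. Since a metric viscosity solution is required to be locally Lipschitz with $|\partial u|(y)=1$, and $|\partial u|(y)\le 1$ is immediate from $1$-Lipschitzness, it remains only to establish the lower bound $|\partial u|(y)\ge 1$ for each fixed $y\in\Omega$.

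I would argue by contradiction, assuming $|\partial u|(y)=\alpha<1$. Set $\beta=\tfrac{1+\alpha}{2}\in(\alpha,1)$ and fix $\rho>0$ small enough that $B_\rho(y)\subseteq\Omega$ and, by the definition of the limsup, $u(y)-u(x)\le\beta\,d(y,x)$ whenever $0<d(y,x)<\rho$. Meanwhile $l_u(y)=1$ supplies points $x_k\in\Omega$ with $\theta_k:=\frac{u(y)-u(x_k)}{d(y,x_k)}\to 1$, which must eventually satisfy $d(y,x_k)\ge\rho$ (otherwise they would violate the local bound). This is exactly where boundedness enters: because $d(y,x_k)\le\operatorname{diam}\Omega<\infty$, the gap $\eta_k:=d(y,x_k)-\bigl(u(y)-u(x_k)\bigr)=(1-\theta_k)\,d(y,x_k)$ tends to $0$, so the descent of $u$ from $y$ to $x_k$ is asymptotically the full distance. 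Using that $\mathcal{Y}$ is a length space I would join $y$ to $x_k$ by a curve $\gamma_k$ of length $L_k\le d(y,x_k)+\delta_k$ with $\delta_k\to0$, and take $z_k$ to be the first point where $\gamma_k$ meets $\partial B_r(y)$ for a fixed $r\in(0,\rho)$ (such a crossing exists since $d(y,x_k)\ge\rho>r$). Then $z_k\in B_\rho(y)\subseteq\Omega$, so the local bound forces $u(y)-u(z_k)\le\beta r$; while splitting $u(y)-u(x_k)=(u(y)-u(z_k))+(u(z_k)-u(x_k))$, estimating $u(z_k)-u(x_k)\le d(z_k,x_k)\le L_k-r\le d(y,x_k)+\delta_k-r$, and inserting $u(y)-u(x_k)\ge d(y,x_k)-\eta_k$, yields $u(y)-u(z_k)\ge r-\eta_k-\delta_k$. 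Combining the two bounds and letting $k\to\infty$ gives $r\le\beta r<r$, a contradiction; hence $|\partial u|(y)=1$ and $u$ is a metric viscosity solution.

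The main obstacle, and the reason boundedness cannot be dropped, is precisely the passage from the multiplicative statement $\theta_k\to1$ to the additive one $\eta_k\to0$: on an unbounded $\Omega$ one may have $\theta_k\to1$ with $d(y,x_k)\to\infty$ and $\eta_k\not\to0$ (a plateau-type phenomenon), so the near-geodesics would fail to carry a unit-rate descent back into a neighborhood of $y$. A secondary point to handle with care is that the curves $\gamma_k$ may leave $\Omega$; this is harmless, since $u$ is evaluated only at $y$, $z_k$, and $x_k$, and the intermediate point $z_k$ lies in $\Omega$ because $d(y,z_k)=r<\rho$ and $B_\rho(y)\subseteq\Omega$.
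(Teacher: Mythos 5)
Your proposal is correct and follows essentially the same route as the paper: your backward direction is the paper's contradiction argument (near-optimal points $x_k$ forced outside a small ball, boundedness of $\Omega$ converting the ratio $\theta_k\to 1$ into the additive gap $\eta_k\to 0$, a near-geodesic crossing $\partial B_r(y)$ at a point where the local slope bound applies), merely phrased with a sequence and a limit instead of the paper's single quantitative point $x^*$ with threshold $1-\frac{\delta r}{4K}$, $K=\max\{r,\operatorname{diam}(\Omega)\}$. The forward direction, which the paper dispenses with in one line, you verify correctly via $|\partial u|(y)\le l_u(y)$ and the $1$-Lipschitz bound.
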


\begin{proof}
We only need to prove the backward direction. It is not difficult to see that $u$ is 1-Lipschitz. Denote diam($\Omega$) as the diameter of $\Omega$. Suppose the contrary, there exist $y \in \Omega$, $\delta>0$ and $r>0$ such that
\begin{equation*}
\frac{(u(y)-u(x))^+}{d(y,x)} \leq 1-\delta \ \text{for all} \ x \in B_{2r}(y) \setminus \{y\}.
\end{equation*}
This implies
\begin{equation*}
u(y)-u(x) \leq d(y,x)-\delta r \ \text{for all} \ x \in \partial B_{r}(y).
\end{equation*}
By equality \eqref{slope1}, there exists $x^{*} \in \Omega \setminus B_{2r}(y)$ such that
\begin{equation*}
\frac{u(y)-u(x^*)}{d(y,x^*)} \geq 1-\frac{\delta r}{4K}
\end{equation*}
where $K=\max\{r,\text{diam}(\Omega)\}$. We have
\begin{equation}\label{slope3}
u(y)-u(x^*) \geq d(y,x^*)-d(y,x^*)\frac{\delta r}{4K} \geq d(y,x^*)-\frac{\delta r}{4}.
\end{equation}
Let $\gamma$ be a continuous curve connecting $y$ to $x^*$ such that $L(\gamma) \leq d(y,x^*)+\frac{\delta r}{2}.$ Let $y_r \in \gamma \cap \partial B_r(y)$, then
\begin{equation}\label{slope4}
\begin{aligned}
u(y)-u(x^*)&=u(y)-u(y_r)+u(y_r)-u(x^*)\\
&\leq d(y,y_r)-\delta r+d(y_r,x^*)\\
& \leq L(\gamma)-\delta r\\
& \leq d(y,x^*)-\frac{\delta r}{2}.
\end{aligned}
\end{equation}
Clearly, inequalities \eqref{slope3} and \eqref{slope4} are contradictory to each other.
\end{proof}

Next, we will discuss the stability of the metric viscosity solutions. Meanwhile, we will present an example that dl$_{C}$-function is not a metric viscosity solution.

On an open set in a complete Riemannian manifold $(\mathcal{M},g)$, viscosity solutions of the eikonal equation $|\triangledown u|_g=1$ are stable with respect to pointwise convergence. It is not difficult to get the same outcome on a general complete, locally compact length space $(\mathcal{Y},d)$. Specifically, assume that $\{u_{n}\}$ is a sequence of metric viscosity solutions on an open set $\Omega$ and $u$ is its pointwise limit. According to the local compactness of $\mathcal{Y}$, as in the proof of Lemma \ref{aaaa}, for any $y_{0} \in \Omega$ there exist $r>0$ and $y_{n} \in \partial B_{r}(y_{0})$ such that
\begin{equation*}
u_{n}(y_{0})-u_{n}(y_{n})=d(y_{0},y_{n})=r \ \text{for all} \ n \in \mathbb{N}.
\end{equation*}
Without loss of generality, assume that $y_{n}$ converges to $y_{1} \in \partial B_{r}(y_{0})$. We have
\begin{equation*}
\begin{aligned}
u(y_{0})-u(y_{1})=&u(y_{0})-u_{n}(y_{0})+u_{n}(y_{0})-u_{n}(y_{n})\\
&+u_{n}(y_{n})-u_{n}(y_{1})+u_{n}(y_{1})-u(y_{1}).
\end{aligned}
\end{equation*}
As $n \rightarrow \infty$ with $|u_{n}(y_{n})-u_{n}(y_{1})| \leq d(y_{n},y_{1})$, it follows that $u(y_{0})-u(y_{1})=r=d(y_{0},y_{1})$. Thus $u$ is a metric viscosity solution.

It is known that, in infinite-dimensional Banach spaces, such stability may fail for various types of viscosity solution. For details, we refer to \cite[Subsection 3.1]{MH}, \cite[Example 2.1]{AS} and \cite[Example 5.5]{YG}.
On the Wasserstein space, we can provide an example as follows.

First we denote
\begin{equation*}
\mathcal{D}=\left\{\sum_{i=1}^{m}\lambda_{i}\delta_{x_{i}} \bigg\vert \sum_{i=1}^{m}\lambda_{i}=1, \ \lambda_{i} \geq 0, \ x_{i} \in \mathcal{X} \ \text{for all} \ i=1,\dots, m, \ m \in \mathbb{N} \right\}.
\end{equation*}
It is well known that $\mathcal{D}$ is dense in $\mathcal{P}_{p}(\mathcal{X})$ ($p \geq 1$).

\begin{ex}\label{ex3}
Let $\omega_{n}=(1-\frac{1}{n^2})\delta_{0}+\frac{1}{n^2}\delta_{n^2} \in \mathcal{P}_{2}(\mathcal{\mathbb{R}})$ and $\Omega \subset \mathcal{P}_{2}(\mathcal{\mathbb{R}})\backslash \{\omega_n\}_{n \in \mathbb{N}}$ be an open set. Then, we observe that
$u_{n}(\omega)=W_{2}(\omega,\omega_{n})-W_{2}(\delta_{0},\omega_{n})$
is a (strong) metric viscosity solution on $\Omega$ for each $n \in \mathbb{N}$. However, the limit
$u(\omega)=\lim_{n \rightarrow \infty}[W_{2}(\omega,\omega_{n})-W_{2}(\delta_{0},\omega_{n})]$
is not a metric viscosity solution.

For any given $x_{i} \in \mathbb{R}, \lambda_{i}\in \mathbb{R}_{+}$, $i=1,\cdots,m \ \text{with} \ \sum_{i=1}^{m}\lambda_{i}=1$. Let $\pi_n$ be an optimal coupling between $\sum_{i=1}^{m}\lambda_{i}\delta_{x_{i}}$ and  $\omega_{n}$. Denote
$a_{i0}=\pi_n\{(x_{i},0)\}$ and
$a_{in^2}=\pi_n\{(x_{i},n^2)\},$
then $\sum_{i=1}^{m}a_{i0}=1-\frac{1}{n^2}, \ \sum_{i=1}^{m}a_{in^2}=\frac{1}{n^2}$. So we have
\begin{equation*}
\begin{aligned}
&W_{2}(\sum_{i=1}^{m}\lambda_{i}\delta_{x_{i}},\omega_{n})-W_{2}(\delta_{0},\omega_{n})\\
=&W_{2}(\sum_{i=1}^{m}\lambda_{i}\delta_{x_{i}}, \omega_{n})-n\\
=&\{\sum_{i=1}^{m}a_{i0}x_{i}^2+\sum_{i=1}^{m}a_{in^2}(x_{i}-n^2)^2\}^{\frac{1}{2}}-n\\
=&\{\sum_{i=1}^{m}a_{i0}x_{i}^2+n^2+\sum_{i=1}^{m}a_{in^2}x_{i}^2-2n^2\sum_{i=1}^{m}a_{in^2}x_{i}\}^{\frac{1}{2}}-n\\
=&\frac{\sum_{i=1}^{m}a_{i0}x_{i}^2+\sum_{i=1}^{m}a_{in^2}x_{i}^2-2n^2\sum_{i=1}^{m}a_{in^2}x_{i}}{\sqrt{\sum_{i=1}^{m}a_{i0}x_{i}^2+\sum_{i=1}^{m}a_{in^2}x_{i}^2-2n^2\sum_{i=1}^{m}a_{in^2}x_{i}+n^2}+n}.
\end{aligned}
\end{equation*}
Since $\sum a_{in^2}=\frac{1}{n^2}$ and
$\lim_{n \rightarrow \infty}2n\sum_{i=1}^{m}a_{in^2}x_{i}=0,$
\begin{equation*}
\lim_{n \rightarrow \infty }[W_{2}(\sum_{i=1}^{m}\lambda_{i}\delta_{x_{i}},\omega_{n})-W_{2}(\delta_{0},\omega_{n})]=0.
\end{equation*}
By the arbitrariness of $\sum_{i=1}^{m}\lambda_{i}\delta_{x_{i}}$, the above equality holds
on a dense subset of $\Omega$. We claim that
\begin{equation*}
u(\omega)=\lim_{n \rightarrow \infty}[W_2(\omega,\omega_n)-W_2(\delta_0,\omega_n)]=0 \ \text{on} \ \Omega.
\end{equation*}
Assume the contrary that there exists a subsequence $\{W_2(\cdot,\omega_{n_k})-W_2(\delta_0,\omega_{n_k})\}$ that does not converge to the 0-constant function. By the Ascoli-Arzel$\grave{\text{a}}$ Theorem \ref{A-A}, there exists a subsequence of $\{W_2(\cdot,\omega_{n_k})-W_2(\delta_0,\omega_{n_k})\}$ that converges to the 0-constant function, and thus we get a contradiction.
As a result, $u$ obviously is not a metric viscosity solution.
\end{ex}

Example \ref{ex3} also shows that a dl$_{C}$-function is not a metric viscosity solution in general, contrasting with the classical case.
However, the stability of the metric viscosity solutions with respect to locally uniform convergence holds, as stated in the following proposition.

\begin{Pro}[Stability of Metric Viscosity Solutions]
Let $(\mathcal{Y},d)$ be a complete length space and $\Omega \subset \mathcal{Y}$ be an open set. Let $\{u_{n}\}$ be a sequence of metric viscosity solutions on $\Omega$. If for each $y \in \Omega$ there exists $r>0$ with $B_r(x) \subset \Omega$ such that $u_n$ converges uniformly to $u$ on $B_r(x)$, then $u$ is also a metric viscosity solution on $\Omega$.
\end{Pro}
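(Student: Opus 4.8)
The plan is to verify the two conditions of Lemma \ref{local} for the limit $u$: that $u$ is $1$-Lipschitz, and that for every $y_0 \in \Omega$ there is a radius $r > 0$ for which the ball identity $u(y_0) = \inf_{x \in \partial B_r(y_0)}\{u(x) + d(x,y_0)\}$ holds. Observe first that the local uniform convergence forces pointwise convergence on all of $\Omega$, so $u$ is well-defined; and since each $u_n$ is $1$-Lipschitz (Lemma \ref{local}), passing to the pointwise limit in $|u_n(x) - u_n(y)| \le d(x,y)$ shows $u$ is $1$-Lipschitz as well. In particular the inequality $u(y_0) \le \inf_{x \in \partial B_r(y_0)}\{u(x) + d(x,y_0)\}$ is automatic, so only the reverse inequality requires the hypotheses.

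First I would fix $y_0 \in \Omega$ and choose $R > 0$ with $B_R(y_0) \subset \Omega$ on which $u_n \to u$ uniformly. For any fixed $r \in (0,R)$ one has $d(x,y_0) = r$ on $\partial B_r(y_0)$, so $\inf_{x \in \partial B_r(y_0)}\{u_n(x)+d(x,y_0)\} = r + \inf_{\partial B_r(y_0)} u_n$, and since $\partial B_r(y_0) \subset B_R(y_0)$ the uniform estimate $|\inf_{\partial B_r(y_0)} u_n - \inf_{\partial B_r(y_0)} u| \le \sup_{B_R(y_0)}|u_n - u| \to 0$ lets me pass the infimum to the limit. Hence, if I can produce a single radius $r \in (0,R)$ at which $u_n(y_0) = \inf_{\partial B_r(y_0)}\{u_n(x)+d(x,y_0)\}$ holds for all large $n$, then letting $n \to \infty$ and using $u_n(y_0) \to u(y_0)$ immediately yields the desired identity for $u$ at radius $r$, completing the proof via Lemma \ref{local}.

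The crux, and the step I expect to be the main obstacle, is securing such a common radius $r$, since Lemma \ref{local} only guarantees that each $u_n$ satisfies the ball identity at some radius $\rho_n$, and Lemma \ref{SSSSS} then propagates it to all smaller radii, but a priori the $\rho_n$ could shrink to $0$. To rule this out I would establish a localized version of Corollary \ref{SSSS}: every metric viscosity solution on $\Omega$ satisfies the ball identity at every radius $r$ with $B_r(y_0) \subset \Omega$, so that $\rho_n$ may be taken to be any fixed $r < R$, uniformly in $n$. The same Zorn's-lemma and calibrated-curve machinery behind Lemmas \ref{aaaa}, \ref{WWWW} and Theorem \ref{CC} applies once it is confined to the ball $B_R(y_0) \subset \Omega$: starting from $y_0$ one builds, for each $\varepsilon > 0$, an $\varepsilon$-negative gradient curve of length $r$ that stays inside $B_r(y_0)$ (its distance from $y_0$ never exceeds its length), whose endpoint $x \in \partial B_r(y_0)$ satisfies $u_n(y_0) - u_n(x) \ge r - \varepsilon$; letting $\varepsilon \to 0$ gives $\inf_{\partial B_r(y_0)}\{u_n(x) + d(x,y_0)\} \le u_n(y_0)$, which together with $1$-Lipschitzness is the identity at radius $r$.

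With this localized identity in hand for a fixed $r < R$ and every $n$, the limit passage described above goes through verbatim and yields $u(y_0) = \inf_{x \in \partial B_r(y_0)}\{u(x) + d(x,y_0)\}$. Since $y_0 \in \Omega$ was arbitrary, both conditions of Lemma \ref{local} hold and $u$ is a metric viscosity solution on $\Omega$. I would emphasize that the only genuinely delicate point is the confinement of the calibrated-curve construction to $B_R(y_0)$ together with the uniform-in-$n$ choice of radius; the remaining passages, namely the $1$-Lipschitz limit and the interchange of infimum with the uniform limit, are routine.
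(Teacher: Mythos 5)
Your proposal is correct and follows essentially the same route as the paper: verify the two conditions of Lemma \ref{local}, establish that each $u_n$ satisfies the ball identity at a radius fixed independently of $n$ (the paper compresses this step into the phrase ``as in the proof of Lemma \ref{aaaa}'', which is precisely your localized version of Corollary \ref{SSSS}), and then pass to the limit using uniform convergence on that ball. The only cosmetic difference is that the paper transfers a near-minimizer $y_{\varepsilon}$ of a single $u_N$ via an $\varepsilon/3$-splitting, whereas you interchange the infimum over the sphere with the uniform limit; these amount to the same estimate.
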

\begin{proof}
For $y \in \Omega$ and some $r>0$, suppose that $u_{n}$ converges uniformly to $u$ on the closed ball $\bar{B}_{r}(y)$. Then for any $\varepsilon>0$ and some positive integer $N$, we have
\begin{equation*}
|u_{N}(y')-u(y')|<\frac{\varepsilon}{3} \ \text{for any} \ y' \in \bar{B}_{r}(y).
\end{equation*}
As in the proof of Lemma \ref{aaaa}, there exists $y_{\varepsilon} \in \partial B_{r}(y)$ such that
\begin{equation*}
u_{N}(y)-u_{N}(y_{\varepsilon}) \geq r-\frac{\varepsilon}{3}.
\end{equation*}
Therefore,
\begin{equation*}
u(y)-u(y_{\varepsilon}) \geq r-\varepsilon.
\end{equation*}
By Lemma \ref{local}, the conclusion follows.
\end{proof}

\section{Existence of metric viscosity solutions on the Wasserstein space}
In this section, we will prove Theorems \ref{The1.2} and \ref{Pro111}, thus obtain the existence of the strong metric viscosity solutions on $\mathcal{P}_{p}(\mathcal{X})$ by two different ways. By Theorem \ref{CC}, it is not difficult to see that a 1-Lipschitz function $u: \mathcal{P}_{p}(\mathcal{X}) \rightarrow \mathbb{R}$ is a strong metric viscosity solution if and only if for any $\omega \in \mathcal{P}_{p}(\mathcal{X})$, there exists at least one negative gradient ray initiated from $\omega$.

Firstly, we aim to show the existence of co-rays for a sequence satisfying the (CS) condition in the Wasserstein space. It acts as a key element in the analysis of certain class of dl$_C$-functions (namely strong metric viscosity solutions). In \cite{HB}, Busemann introduced the concept of co-ray in G-spaces, which was extended to the Wasserstein space in \cite{ZG1}. Although traditionally defined in connection with a Busemann function, a co-ray fundamentally represents a limit of a sequence of geodesics tending to infinity. Herein, we will extend the concept.

\begin{defn}[Co-ray]
Let $(\mathcal{Y},d)$ be an unbounded geodesic space, and let $y_0 \in \mathcal{Y}$ and $\{y^{n}\}$ be a sequence in $\mathcal{Y}$.
A ray $\gamma$ is called a co-ray from $y_0$ to $\{y^{n}\}$ if there exist:
\begin{itemize}
\item{a subsequence $\{y^{n_{k}}\}$ with $d(y_0,y^{n_{k}}) \rightarrow \infty$;}
\item{a geodesic $\gamma^{n_{k}}$ connecting $y_0$ to $y^{n_{k}}$ for each $n_{k} \in \mathbb{N}$,}
\end{itemize}
such that $\gamma$ is the pointwise limit of $\{\gamma^{n_{k}}\}$.
\end{defn}

To facilitate the subsequent discussion, we first generalize Theorem \ref{The2.7} slightly to obtain the following lemma.
\begin{Lem}\label{Lem3.7}
Let $(\mathcal{Y},d)$ be a Polish space and $p \geq 1$. Let $\{\mu^n\}_{n \in \mathbb{N}} \subset \mathcal{P}_p(\mathcal{Y})$ and $\mu \in \mathcal{P}_p(\mathcal{Y})$, then the following statements are equivalent:
\begin{enumerate}
  \item[(i)]For some convergent sequence $\{\nu^{n}\} \subset \mathcal{P}_{p}(\mathcal{Y})$, {$\mu^{n} \Rightarrow \mu$ and
      $$\lim_{R \rightarrow \infty}\limsup_{n \rightarrow \infty}\int_{d(x,y) \geq R}d^{p}(x,y)d\pi^{n}=0,$$
      where $\pi^{n}$ is a coupling between $\mu^{n}$ and $\nu^{n}$;}
  \item[(ii)]{$W_{p}(\mu^{n},\mu) \rightarrow 0$ as $n \rightarrow \infty$.}
\end{enumerate}
\end{Lem}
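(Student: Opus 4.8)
The plan is to read Lemma \ref{Lem3.7} as a strengthening of Theorem \ref{The2.7}, in which the fixed reference mass $\delta_{y_0}$ is replaced by an arbitrary convergent sequence $\{\nu^n\}$ and the tail $p$-moment of $\mu^n$ is replaced by a tail integral against a coupling $\pi^n$. Accordingly, I would reduce both implications back to Theorem \ref{The2.7}. For the easy direction $(ii)\Rightarrow(i)$, I would simply take the constant sequence $\nu^n=\mu$ (trivially convergent) together with optimal couplings $\pi^n\in Opt(\mu^n,\mu)$. Then $\int d^p(x,y)\,d\pi^n=W_p^p(\mu^n,\mu)\to 0$, so the full integral — hence each tail integral $\int_{\{d(x,y)\ge R\}}d^p\,d\pi^n$ — tends to $0$, while $\mu^n\Rightarrow\mu$ follows from $W_p$-convergence (indeed from the $(ii)\Rightarrow(i)$ part of Theorem \ref{The2.7}). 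This direction is routine.

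The substance lies in $(i)\Rightarrow(ii)$. Fixing a base point $y_0$, I note that by Theorem \ref{The2.7} together with the hypothesis $\mu^n\Rightarrow\mu$, it suffices to verify the classical uniform-integrability condition
\[
\lim_{R\to\infty}\limsup_{n\to\infty}\int_{\{d(y_0,x)\ge R\}}d^p(y_0,x)\,d\mu^n(x)=0.
\]
Since $\mu^n$ is the first marginal of $\pi^n$, I would rewrite this tail integral over the product space and exploit the triangle inequality $d(y_0,x)\le d(y_0,y)+d(x,y)$. The key observation is that on $\{d(y_0,x)\ge 2S\}$ at least one of $d(y_0,y)\ge S$ or $d(x,y)\ge S$ must hold; I would split this region accordingly and bound each piece using either the elementary inequality $(a+b)^p\le 2^{p-1}(a^p+b^p)$ or a direct comparison of $d(y_0,x)$ against the larger of $d(y_0,y)$ and $d(x,y)$.

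Two ingredients feed these estimates. The hypothesis controls $\limsup_n\int_{\{d(x,y)\ge S\}}d^p(x,y)\,d\pi^n$, while the convergence $\nu^n\to\nu$ of the second marginals yields, through the easy direction of Theorem \ref{The2.7}, the uniform integrability $\lim_S\limsup_n\int_{\{d(y_0,y)\ge S\}}d^p(y_0,y)\,d\nu^n=0$. The main obstacle — and the only place demanding care — is a cross term of the form $\int d^p(x,y)\,d\pi^n$ taken over a region controlled only by the size of $d(y_0,y)$, where neither ingredient applies directly. I would dispose of it by subdividing that region once more according to whether $d(x,y)\ge S$: on one part the coupling tail bound applies, and on the other the pointwise domination $d^p(x,y)\le d^p(y_0,y)$ reduces it to the $\nu^n$-tail. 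Collecting the finitely many pieces produces a bound of the shape $C_p\,\varepsilon$ (uniformly in large $n$), and letting $S\to\infty$ yields the desired tail-moment condition, thereby closing the argument via Theorem \ref{The2.7}.
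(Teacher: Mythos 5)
Your proposal is correct, and it follows exactly the route the paper indicates: the paper omits the details, stating only that the lemma ``follows from Theorem \ref{The2.7} and the triangle inequality,'' which is precisely the reduction you carry out. Your elaboration --- reducing both directions to Theorem \ref{The2.7}, taking $\nu^n=\mu$ with optimal couplings for $(ii)\Rightarrow(i)$, and for $(i)\Rightarrow(ii)$ splitting the tail region $\{d(y_0,x)\ge 2S\}$ via the triangle inequality and disposing of the cross terms by the pointwise dominations $d^p(x,y)\le d^p(y_0,y)$ and $d^p(y_0,y)\le d^p(x,y)$ on the appropriate subregions --- soundly fills in the argument the authors left to the reader.
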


This lemma follows from Theorem \ref{The2.7} and the triangle inequality,  and the proof is rather straightforward. For the sake of brevity, the details are omitted.

\begin{Lem}\label{Lem3.10}
On the Wasserstein space $\mathcal{P}_{p}(\mathcal{X})\ (p>1)$, for a sequence $\{\omega_n\}$, whether the (CS) condition holds is independent of the choices of the positive real number $\sigma$, base point $\omega_{0}$ and specific geodesics $\{\mu^{n}(t)\}_{0 \leq t \leq W_{p}(\omega_{0},\omega_{n})}$.
\end{Lem}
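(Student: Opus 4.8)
The plan is to translate the (CS) condition into a \emph{uniform $p$-integrability} statement about the measures in which the geodesics puncture the sphere $\partial B_\sigma(\omega_0)$, and then check that this statement sees none of the three parameters. Fix a reference point $o\in\mathcal{X}$ and set $L_n=W_p(\omega_0,\omega_n)$. By Lemma \ref{Lem2.4}, a unit-speed geodesic $\mu^n\colon[0,L_n]\to\mathcal{P}_p(\mathcal{X})$ from $\omega_0$ to $\omega_n$ is the law of $\gamma(\,\cdot/L_n)$ for a random geodesic $\gamma$ whose endpoints $(\gamma(0),\gamma(1))$ realize an optimal coupling $\pi_n$ of $\omega_0,\omega_n$, and the crossing measure is $\mu^n(\sigma)=\mathrm{law}\big(\gamma(\sigma/L_n)\big)$. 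Since each $\mu^n(\sigma)$ lies at $W_p$-distance $\sigma$ from $\omega_0$, the family is $W_p$-bounded, hence tight with weakly convergent subsequences by Lemma \ref{CON}. Therefore, by Theorem \ref{The2.7}, $\{\mu^n(\sigma)\}$ admits a $W_p$-convergent subsequence if and only if there is a subsequence $\{n_k\}$ along which
\begin{equation*}
\lim_{R\to\infty}\ \limsup_{k\to\infty}\ \int_{d(o,y)\ge R} d^p(o,y)\,d\mu^{n_k}(\sigma)(y)=0 .
\end{equation*}
Thus (CS) is exactly subsequential uniform $p$-integrability of the crossing measures, and the entire task is to show this property is blind to $\sigma$, to $\omega_0$, and to the chosen geodesics.

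Next I would dispose of the two easy parameters. For the radial position of a crossing point the triangle inequality gives the \emph{pinning estimate}
\begin{equation*}
\Big| d\big(o,\gamma(\sigma/L_n)\big)-\tfrac{\sigma}{L_n}\,d\big(\gamma(0),\gamma(1)\big)\Big|\le d\big(o,\gamma(0)\big),
\end{equation*}
whose right-hand side pushes forward to $\omega_0$, a single measure and hence uniformly $p$-integrable. Since the integrand $d^p(o,\cdot)$ only feels the radial coordinate, this shows that the tail behaviour of $\mu^n(\sigma)$ depends on the geodesic \emph{only through the endpoint coupling} $\pi_n$, not through the angular selection of $\gamma$ given $\pi_n$. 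For the $\sigma$-independence (with $\pi_n$ fixed) I would use $d(\gamma(\sigma_0/L_n),\gamma(\sigma_1/L_n))=\tfrac{|\sigma_1-\sigma_0|}{\sigma_0}\,d(\gamma(0),\gamma(\sigma_0/L_n))$ together with the triangle inequality to obtain
\begin{equation*}
d\big(o,\gamma(\sigma_1/L_n)\big)\le C(\sigma_0,\sigma_1)\Big[\,d\big(o,\gamma(0)\big)+d\big(o,\gamma(\sigma_0/L_n)\big)\Big],
\end{equation*}
so the $\sigma_1$-tail of $\mu^n(\sigma_1)$ is dominated by the ($\omega_0$-) tail of $\gamma(0)$ and the $\sigma_0$-tail of $\mu^n(\sigma_0)$; uniform integrability at $\sigma_0$ therefore transfers to $\sigma_1$ along the same subsequence. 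Combining these two observations reduces (CS) to the genuinely intrinsic cost condition
\begin{equation*}
\lim_{S\to\infty}\ \limsup_{n\to\infty}\ \frac{1}{L_n^{\,p}}\int_{d(x,y)\ge S L_n} d^p(x,y)\,d\pi_n(x,y)=0,
\end{equation*}
which manifestly does not involve $\sigma$ or the angular data, only the optimal coupling $\pi_n$.

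It remains to remove the dependence on the optimal coupling $\pi_n$ (different geodesics may carry genuinely different optimal couplings) and on the base point $\omega_0$. Here I would invoke the generalized convergence criterion, Lemma \ref{Lem3.7}: take the convergent choice-A crossing measures as the reference sequence $\nu^{n_k}$, and try to build a coupling between the choice-B crossing measures and $\nu^{n_k}$ whose transport cost is uniformly $p$-integrable, obtaining $W_p$-convergence of choice-B. The natural couplings—gluing the two geodesic plans along their common endpoint marginal $\omega_n$, and, for a change of base point, also gluing in an optimal plan between $\omega_0$ and $\omega_0'$ and using $|L_n-L_n'|\le W_p(\omega_0,\omega_0')$—reduce everything to controlling how the \emph{far-travelling mass} of two distinct optimal plans is aligned.

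The main obstacle is precisely this last alignment: I expect that the naive gluings (through $\omega_n$, or through the common start) do \emph{not} have uniformly integrable cost, because two optimal plans may route different portions of mass toward infinity. The crux is therefore to prove that the intrinsic cost condition above is the same for every optimal coupling and for every admissible base point. I anticipate this is exactly where the hypothesis $p>1$ enters: strict convexity of $t\mapsto t^p$ and the $c$-cyclical monotonicity of optimal plans should forbid one optimal coupling from gratuitously sending a non-negligible fraction of the cost over distances arbitrarily large relative to $L_n$ while another does not. A clean proof would establish that optimality pins the far-transport profile of $\pi_n$ up to a uniformly integrable error, after which the pinning estimate closes the argument for all three parameters simultaneously; quantifying this rigidity uniformly in $n$ is the hard part.
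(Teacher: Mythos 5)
Your reductions up to the ``intrinsic cost condition'' are correct and essentially reproduce the paper's own computation: via Lemma \ref{Lem2.4}, the constant-speed identity $d(\gamma(0),\gamma(\sigma/L_n))=\tfrac{\sigma}{L_n}d(\gamma(0),\gamma(1))$, and Lemmas \ref{CON} and \ref{Lem3.7}, the (CS) condition for a given choice of data becomes $\lim_{S\to\infty}\limsup_n L_n^{-p}\int_{d\ge SL_n}d^p\,d\pi_n=0$, where $\pi_n$ is the endpoint optimal coupling; this removes $\sigma$ and the choice of interpolating geodesics over a fixed plan. But at that point the proof stops: the independence of the optimal coupling $\pi_n$ and of the base point $\omega_0$ --- which you yourself flag as ``the hard part'' --- is precisely the nontrivial content of the lemma, and for it you offer only a conjecture (cyclical monotonicity plus strict convexity of $t\mapsto t^p$ should pin the far-transport profile of optimal plans). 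That is a genuine gap. It is, incidentally, the same point where the paper's own proof is weakest: in its chain of equalities it asserts, without justification, that the two endpoint optimal couplings $(e_0,e_{L_n})_\sharp\alpha^n$ and $(e_0,e_{L_n})_\sharp\beta^n$ have equal truncated cost integrals (only the second and sixth equalities are justified there).

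Moreover, the mechanism you anticipate is not the right one and would fail if pursued literally: distinct optimal couplings between the same marginals need \emph{not} have the same displacement distribution, hence not the same truncated cost profile. For instance, with four points satisfying $d(a_1,b_1)=1$, $d(a_2,b_2)=\sqrt7$, $d(a_1,b_2)=d(a_2,b_1)=2$ and $\mu=\tfrac12(\delta_{a_1}+\delta_{a_2})$, $\nu=\tfrac12(\delta_{b_1}+\delta_{b_2})$, $p=2$, every coupling is optimal, yet $\int_{d\ge 2}d^2\,d\pi$ takes different values for the diagonal and antidiagonal plans; so ``optimality pins the profile'' is false at fixed scale. What saves the lemma is far more elementary and uses no optimality at all: the rescaled far cost is determined by the \emph{marginals}. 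For any coupling $\pi\in\Pi(\omega_0,\omega_n)$, any fixed $o\in\mathcal{X}$ and any $S>0$, splitting according to which of $d(o,x),d(o,y)$ (resp.\ $d(o,x),d(x,y)$) is larger gives
\begin{align*}
\int_{d(x,y)\ge 2S}d^p(x,y)\,d\pi &\le 2^p\Big[\int_{d(o,y)\ge S}d^p(o,y)\,d\omega_n+\int_{d(o,x)\ge S}d^p(o,x)\,d\omega_0\Big],\\
\int_{d(o,y)\ge 2S}d^p(o,y)\,d\omega_n &\le 2^p\Big[\int_{d(x,y)\ge S}d^p(x,y)\,d\pi+\int_{d(o,x)\ge S}d^p(o,x)\,d\omega_0\Big].
\end{align*}
Taking $S=S'L_n$ and using that $\omega_0$ is a fixed measure with finite $p$-moment while $L_n\to\infty$, the $\omega_0$-terms vanish as $n\to\infty$; hence your intrinsic condition is equivalent to $\lim_{S'\to\infty}\limsup_n L_n^{-p}\int_{d(o,y)\ge S'L_n}d^p(o,y)\,d\omega_n=0$, which involves no coupling whatsoever, and is also stable under replacing $\omega_0$ by $\omega_0'$ (then $L_n$ changes by at most $W_p(\omega_0,\omega_0')$) and under moving $o$. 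This closes the gap for all three parameters at once; note finally that $p>1$ is needed only to invoke Lemma \ref{Lem2.4}, not for any convexity or monotonicity argument about optimal plans.
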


\begin{proof}
We only show that the (CS) condition is independent of the specific choice of $\{\mu^{n}(t)\}_{0 \leq t \leq W_{p}(\omega_{0},\omega_{n})}$. The proofs for other cases are analogous and omitted here.

Let $\{\nu^{n}(t)\}_{0 \leq t \leq L_{n}}$ and $\{\rho^{n}(t)\}_{0 \leq t \leq L_{n}}$ be any two unit-speed geodesics connecting $\omega_{0}$ to $\omega_{n}$. By Lemma \ref{Lem2.4}, for each $n$, there exists $\alpha^{n}, \ \beta^{n} \in \mathcal{P}(C[0,L_n], \mathcal{X})$ such that for any $0 \leq t \leq L_{n}$, we have
$\nu^{n}(t)=(e_{t})_{\sharp}\alpha^{n},\ \rho^{n}(t)=(e_{t})_{\sharp}\beta^{n}.$
Assume without loss of generality that $L_{n} \geq \sigma$ for all $n \in \mathbb{N}$. Let $\pi^{n}=(e_{0},e_{\sigma})_{\sharp}\alpha^{n} \in Opt(\nu^{n}(0),\nu^{n}(\sigma))$ and $\xi^{n}=(e_{0},e_{\sigma})_{\sharp}\beta^{n} \in Opt(\rho^n(0),\rho^{n}(\sigma))$. For $\alpha^{n}$-almost every $\gamma^{n} \in C([0,L_{n}],\mathcal{X})$ and $\beta^{n}$-almost every $\varsigma^{n} \in C([0,L_{n}],\mathcal{X})$, we have
\begin{equation*}
\begin{aligned}
\int_{d(x,y) \geq R}d^{p}(x,y)d\pi^{n}&=\int_{d(\gamma^{n}(0),\gamma^{n}(\sigma))\geq R}d^{p}(\gamma^{n}(0),\gamma^{n}(\sigma)) d \alpha^{n}(\gamma^{n})\\
&=(\frac{\sigma}{L_{n}})^{p}\int_{d(\gamma^{n}(0),\gamma^{n}(L_{n})) \geq \frac{L_{n}R}{\sigma}}d^{p}(\gamma^{n}(0),\gamma^{n}(L_{n}))d \alpha^{n}(\gamma^{n})\\
&=(\frac{\sigma}{L_{n}})^{p}\int_{d(x,y) \geq \frac{L_{n}R}{\sigma}}d^{p}(x,y)d(e_{0},e_{L_{n}})_{\sharp}\alpha^{n}\\
&=(\frac{\sigma}{L_{n}})^{p}\int_{d(x,y) \geq \frac{L_{n}R}{\sigma}}d^{p}(x,y)d(e_{0},e_{L_{n}})_{\sharp}\beta^{n}\\
&=(\frac{\sigma}{L_{n}})^{p}\int_{d(\varsigma^{n}(0),\varsigma^{n}(L_{n})) \geq \frac{L_{n}R}{\sigma}}d^{p}(\varsigma^{n}(0),\varsigma^{n}(L_{n}))d\beta^{n}(\varsigma^{n})\\
&=\int_{d(\varsigma^{n}(0),\varsigma^{n}(\sigma)) \geq R}d^{p}(\varsigma^{n}(0),\varsigma^{n}(\sigma))d\beta^{n}(\varsigma^{n})\\
&=\int_{d(x,y) \geq R}d^{p}(x,y)d\xi^{n},
\end{aligned}
\end{equation*}
where the second (resp. sixth) equality follows from that each $\gamma^{n}$ (resp. $\varsigma^{n}$) is a random geodesic. Taking $\lim_{R \rightarrow \infty}\limsup_{n \rightarrow \infty}$ on both sides yields
\begin{equation}\label{3.7}
\lim_{R \rightarrow \infty}\limsup_{n \rightarrow \infty} \int_{d(x,y) \geq R}d^{p}(x,y)d\pi^{n}=\lim_{R \rightarrow \infty}\limsup_{n \rightarrow \infty} \int_{d(x,y) \geq R}d^{p}(x,y)d\xi^{n}.
\end{equation}
According to Lemma \ref{CON}, for any $\sigma>0$ there exist subsequences of $\{\mu^n(\sigma)\}$ and $\{\nu^{n}(\sigma)\}$ converge weakly in $\mathcal{P}_p(\mathcal{X})$. By Lemma \ref{Lem3.7} and equality \eqref{3.7}, the conclusion follows.
\end{proof}

On a complete, separable, unbounded and locally compact geodesic space, the (CS) condition always holds when the sequence goes to infinity, guaranteed by the local compactness. Although $\mathcal{P}_{p}(\mathcal{X})$ lacks local compactness, we can still provide two examples that satisfy the condition:
\begin{ex}\label{Ex1}
The sequence $\{\omega_n\}$ diverges to infinity and lies on a ray in $\mathcal{P}_p(\mathcal{X})$.
\end{ex}

\begin{ex}\label{ex111}
The Dirac measures $\{\delta_{x_n}\}$ diverge to infinity in $ \mathcal{P}_p(\mathcal{X})$.
\end{ex}

Although there are many examples in $\mathcal{P}_p(\mathcal{X})$, as mentioned above, which satisfy the (CS) condition, we can also present an example where the condition does not hold as follows.

\begin{ex}\label{ex5}
Let $\omega_{n}=(1-\frac{1}{n^{p}})\delta_{0}+\frac{1}{n^{p}}\delta_{n^{2}} \in \mathcal{P}_{p}(\mathbb{R})$ $(p>1)$. Then the $p$-Wasserstein distance between $\omega_{n}$ and $\delta_{0}$ is given by
$$W_{p}(\omega_{n},\delta_{0})=\left(\frac{1}{n^{p}}|n^{2}-0|^{p} \right)^{\frac{1}{p}}=n,$$
which diverges to infinity as $n$ goes to infinity.
$\{(1-\frac{1}{n^{p}})\delta_{0}+\frac{1}{n^{p}}\delta_{tn} \}_{0 \leq t \leq n}$ is the unique unit-speed geodesic connecting $\delta_{0}$ to $\omega_{n}$. Now, consider set
$$\left\{ \left\{(1-\frac{1}{n^{p}})\delta_{0}+\frac{1}{n^{p}}\delta_{tn} \right\}_{0 \leq t \leq n}\right\}_{n \in \mathbb{N}} \cap \partial B_{1}(\delta_{0})=\left\{(1-\frac{1}{n^{p}})\delta_{0}+\frac{1}{n^{p}}\delta_{n} \right\}_{n \in \mathbb{N}}.$$
We claim that the sequence $\left\{(1-\frac{1}{n^{p}})\delta_{0}+\frac{1}{n^{p}}\delta_{n} \right\}_{n \in \mathbb{N}}$  has no convergent subsequence. Suppose, for contradiction, that there exist a subsequence (still denoted by $n$ for simplicity) and a probability measure $\mu \in \mathcal{P}_{p}(\mathbb{R})$ such that
$$\lim_{n \rightarrow \infty}W_{p}((1-\frac{1}{n^{p}})\delta_{0}+\frac{1}{n^{p}}\delta_{n}, \mu)=0.$$
By \cite[Proposition 5.1.8]{LA1}, we have $\mu=\delta_{0}$, which is a contradiction.
\end{ex}

\begin{Lem}\label{The1.1}
Assume $\mu, \mu^{n} \in \mathcal{P}_p(\mathcal{X})$ $(p>1)$ such that $\lim_{n \rightarrow \infty} W_{p}(\mu,\mu^{n})=\infty$, then there exists a co-ray from $\mu$ to $\{\mu^n\}$ if and only if $\{\mu^{n}\}$ satisfies the (CS) condition.
\end{Lem}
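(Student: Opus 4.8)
The plan is to prove the two implications separately, treating the forward direction (a co-ray forces (CS)) as the routine one and the converse (constructing a co-ray from (CS)) as the substantial one. Throughout, $\mu^n:[0,L_n]\to\mathcal{P}_p(\mathcal{X})$ will denote unit-speed geodesics from $\mu$ to $\mu^n$, which exist by Lemma \ref{Lem2.4}, where $L_n=W_p(\mu,\mu^n)\to\infty$.

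First I would dispose of the easy direction. Suppose a co-ray $\gamma$ from $\mu$ to $\{\mu^n\}$ exists, furnished by a subsequence $\{\mu^{n_k}\}$ with $W_p(\mu,\mu^{n_k})\to\infty$ and unit-speed geodesics $\gamma^{n_k}$ from $\mu$ to $\mu^{n_k}$ whose pointwise limit is $\gamma$. Fix any $\sigma>0$; for $k$ large we have $W_p(\mu,\mu^{n_k})\geq\sigma$, so $\gamma^{n_k}(\sigma)$ is defined, lies on $\partial B_\sigma(\mu)$ by unit speed, and converges to $\gamma(\sigma)$ by pointwise convergence at the parameter $\sigma$. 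Completing $\{\gamma^{n_k}\}$ to a family of unit-speed geodesics for all remaining indices (Lemma \ref{Lem2.4}), the set $\partial B_\sigma(\mu)\cap\{\mu^n(t):0\leq t\leq L_n\}_{n}$ contains the convergent sequence $\{\gamma^{n_k}(\sigma)\}$, so it has convergent subsequences and (CS) holds.

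For the converse I would construct the co-ray by a compactness argument. Extend each $\mu^n$ to $[0,\infty)$ by freezing it at $\mu^n$ for $t\geq L_n$; since each geodesic is $1$-Lipschitz in $t$, the extended family is equi-Lipschitz, hence equicontinuous. The crucial point is pointwise relative compactness: for each fixed $t>0$ the set $\{\mu^n(t)\}_n$ is relatively compact in $\mathcal{P}_p(\mathcal{X})$. Indeed, for the cofinitely many $n$ with $L_n\geq t$ this is precisely the (CS) condition read at scale $\sigma=t$, which is legitimate because Lemma \ref{Lem3.10} shows (CS) is independent of the chosen scale; the finitely many remaining indices add only finitely many points and do not destroy relative compactness. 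With equicontinuity and pointwise relative compactness in hand, the Ascoli--Arzel\`a theorem \ref{A-A} (taking $\mathcal{Y}=[0,\infty)$ and $\mathcal{K}=\mathcal{P}_p(\mathcal{X})$) yields a subsequence $\{\mu^{n_k}\}$ converging, uniformly on compact subsets of $[0,\infty)$, to a curve $\gamma:[0,\infty)\to\mathcal{P}_p(\mathcal{X})$. To see that $\gamma$ is a co-ray, fix $s,t\geq 0$; for $k$ large $L_{n_k}\geq\max\{s,t\}$, so $W_p(\mu^{n_k}(s),\mu^{n_k}(t))=|s-t|$, and passing to the limit gives $W_p(\gamma(s),\gamma(t))=|s-t|$. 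Thus $\gamma$ is a unit-speed, hence regular, geodesic ray with $\gamma(0)=\mu$, and being the pointwise limit of the geodesics $\mu^{n_k}$ joining $\mu$ to $\mu^{n_k}$ with $W_p(\mu,\mu^{n_k})\to\infty$, it is a co-ray from $\mu$ to $\{\mu^n\}$ by definition.

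The main obstacle is the pointwise relative compactness step, and it is exactly where the lack of local compactness of $\mathcal{P}_p(\mathcal{X})$ bites: boundedness of $\{\mu^n(t)\}_n$ alone (Lemma \ref{CON}) yields only weak subconvergence, not $W_p$-subconvergence. This gap is closed precisely by the (CS) condition, which supplies $W_p$-relative compactness at one scale, together with Lemma \ref{Lem3.10}, which propagates it to every scale $t$ through the uniform-integrability characterization of $W_p$-convergence; this is also where the hypothesis $p>1$ is genuinely used, via Lemmas \ref{Lem2.4} and \ref{Lem3.10}. The only remaining bookkeeping is the varying domains $[0,L_n]$, which the constant extension handles cleanly.
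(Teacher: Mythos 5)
Your proposal is correct and follows essentially the same route as the paper's own proof: the forward direction is read off from the definition of a co-ray, and the backward direction combines the (CS) condition with Lemma \ref{Lem3.10} to get relative compactness of $\{\mu^n(t)\}$ at every time $t$, then applies the Ascoli--Arzel\`a Theorem \ref{A-A} to extract a locally uniformly convergent subsequence whose limit is the desired ray. The paper states this in three lines; you have merely filled in the bookkeeping (constant extension past $L_n$, verification that the limit curve is a unit-speed ray), which the paper leaves implicit.
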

\begin{proof}
The forward direction follows from the definition of the (CS) condition. Thus, we only need to prove the backward direction. Let $\gamma^n$ be a unit-speed geodesic connecting $\omega$ and $\omega^n$, by Lemma \ref{Lem3.10}, we know that $\{\gamma^n(t)\}$ is relatively compact in $ \mathcal{P}_p(\mathcal{X})$ for all $t \geq 0$. By the Ascoli-Arzel$\grave{\text{a}}$ Theorem \ref{A-A}, we obtain the conclusion.
\end{proof}

Consider a sequence $\{H_n\}$ of non-empty closed subsets in $\mathcal{P}_{p}(\mathcal{X})$ such that, for any given $\omega_{0} \in \mathcal{P}_{p}(\mathcal{X})$, the distance $W_{p}(\omega_0,H_n)$ diverges to infinity as $n$ goes to infinity. We define
\begin{equation*}
u_{n}(\omega)=W_{p}(\omega,H_{n})-W_{p}(\omega_{0},H_{n}).
\end{equation*}
By the Ascoli-Arzel$\grave{\text{a}}$ Theorem \ref{A-A}, up to a subsequence, we suppose that
$$u(\omega)= \lim_{n \rightarrow \infty} u_{n}(\omega).$$
Recall that $\{H_{n}\}$ satisfies the (CS) condition if, for any $\omega \in \mathcal{P}_{p}(\mathcal{X})$, there exists $\mu^{n} \in H_{n}$ for all $n \in \mathbb{N}$ such that
\begin{equation*}
\lim_{n \rightarrow \infty}[W_{p}(\omega,H_{n})-W_{p}(\omega,\mu^{n})]=0
\end{equation*}
and $\{\mu^{n}\}$ itself satisfies the (CS) condition.

\begin{ex}
On $\mathcal{P}_{p}(\mathcal{X})$, Example \ref{ex5} shows that $H_{n}=\{(1-\frac{1}{n^{p}})\delta_{0}+\frac{1}{n^{p}}\delta_{n^2}\}$ does not satisfy the (CS) condition. Based on Example \ref{ex111}, it is not difficult to see that $K_{n}=\{\delta_{x}: x \in \mathcal{X}, W_{p}(\delta_{x},\omega_{0})=n\}$ for any $\omega_{0} \in \mathcal{P}_{p}(\mathcal{X})$, do satisfy the (CS) condition.
\end{ex}

\begin{proof}[Proof of Theorem \ref{The1.2}]
By the assumption, for a given $\omega \in \mathcal{P}_{p}(\mathcal{X})$, there exists a sequence $\{\mu^{n}\}$, $\mu^{n} \in H_{n}$, satisfying the (CS) condition and
\begin{equation}\label{ff}
\lim_{n \rightarrow \infty} [W_{p}(\omega,H_{n})-W_{p}(\omega,\mu^{n})]=0.
\end{equation}
Let $\gamma_{n}:[0,l_{n}] \rightarrow \mathcal{P}_{p}(\mathcal{X})$ be a unit-speed geodesic connecting $\omega$ to $\mu^{n}$. By Lemma \ref{The1.1}, up to a subsequence, there exists a unit-speed ray $\gamma$ initiated from $\omega$ such that
\begin{equation*}
\gamma(t)=\lim_{n \rightarrow \infty}\gamma_{n}(t).
\end{equation*}

We claim that $\gamma$ is a negative gradient ray of $u$.
By equality \eqref{ff}, it follows that
\begin{equation*}
\begin{aligned}
&\limsup_{n \rightarrow \infty}[W_{p}(\gamma_{n}(t),\mu^{n})-W_{p}(\gamma_{n}(t),H_{n})]\\
=&\limsup_{n \rightarrow \infty}[W_{p}(\omega,\mu^{n})-W_{p}(\gamma_{n}(t),\omega)-W_{p}(\gamma_{n}(t),H_{n})]\\
\leq & \lim_{n \rightarrow \infty}[W_{p}(\omega,\mu^{n})-W_{p}(\omega,H_{n})]\\
=&0.
\end{aligned}
\end{equation*}
Combining with $W_{p}(\gamma_{n}(t),\mu^{n})-W_{p}(\gamma_{n}(t),H_{n}) \geq 0$, we obtain
\begin{equation}\label{gg}
\lim_{n \rightarrow \infty}[W_{p}(\gamma_{n}(t),\mu^{n})-W_{p}(\gamma_{n}(t),H_{n})]=0.
\end{equation}
Without loss of generality, we assume $0 \leq t_1 \leq t_{2} \leq l_{n}$ for all $n$. By equality \eqref{gg}, we have
$$\begin{aligned}
&u(\gamma(t_{1}))-u(\gamma(t_{2}))\\
=&\lim_{n \rightarrow \infty}[W_{p}(\gamma_{n}(t_{1}),H_{n})-W_{p}(\gamma_{n}(t_{2}),H_{n})] \\
= &\lim_{n \rightarrow \infty}\left[
W_{p}(\gamma_{n}(t_{1}),\mu^{n})-W_{p}(\gamma_{n}(t_{2}),\mu^{n}) \right]\\
=&\lim_{n \rightarrow \infty}\left[(l_{n}-t_{1})-(l_{n}-t_{2}) \right]\\
=&t_{2}-t_{1}\\
=&W_p(\gamma(t_2),\gamma(t_1)),
\end{aligned}$$
which concludes the argument.
\end{proof}

\begin{Lem}\label{Lem4.5}
Let $u$ be a 1-Lipschitz function on $\mathcal{P}_p(\mathcal{X})$, then any unit-speed curve $\gamma: [0, \infty) \rightarrow \mathcal{P}_{p}(\mathcal{X})$ satisfying $u(\gamma(t_1))-u(\gamma(t_2))=t_2-t_1$ for all $t_2 \geq t_1 \geq 0$ must be a ray.
\end{Lem}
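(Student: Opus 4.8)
The plan is to sandwich the Wasserstein distance $W_p(\gamma(t_1),\gamma(t_2))$ between two matching bounds and then read the ray condition off the resulting equality. Fix $0 \le t_1 \le t_2$. For the lower bound I would combine the hypothesis with the $1$-Lipschitz property of $u$: the assumption gives $u(\gamma(t_1))-u(\gamma(t_2)) = t_2-t_1$, while $1$-Lipschitzness yields $u(\gamma(t_1))-u(\gamma(t_2)) \le W_p(\gamma(t_1),\gamma(t_2))$, so that $t_2-t_1 \le W_p(\gamma(t_1),\gamma(t_2))$.

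For the upper bound I would use the unit-speed hypothesis. Writing $s_\gamma(t)=L(\gamma\mid_{[0,t]})$, the unit-speed assumption means $s_\gamma'(t)=|\gamma'|(t)=1$ for almost every $t$, hence $L(\gamma\mid_{[t_1,t_2]}) = s_\gamma(t_2)-s_\gamma(t_1) = t_2-t_1$. Since the Wasserstein distance never exceeds the length of any curve joining its endpoints, we get $W_p(\gamma(t_1),\gamma(t_2)) \le L(\gamma\mid_{[t_1,t_2]}) = t_2-t_1$. Putting the two bounds together gives
\begin{equation*}
W_p(\gamma(t_1),\gamma(t_2)) = t_2-t_1 \quad \text{for all } 0 \le t_1 \le t_2.
\end{equation*}

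To finish, I would specialize this identity to $t_1=0,\ t_2=1$, obtaining $k_\gamma := W_p(\gamma(0),\gamma(1)) = 1$, and conclude that $W_p(\gamma(s),\gamma(t)) = |s-t| = |s-t|\,k_\gamma$ for all $s,t \ge 0$, which is precisely the defining relation for a ray in Definition \ref{ray}. The regularity requirement in Definition \ref{RC} is immediate, since $|\gamma'|(t)=1 \neq 0$ for almost every $t$ by the unit-speed hypothesis, so $\gamma$ is a (regular) ray.

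I do not expect any genuine obstacle here; the argument is a clean two-sided estimate. The only points that merit a line of justification are the passage $L(\gamma\mid_{[t_1,t_2]}) = t_2-t_1$ from the unit-speed parametrization (via the arc-length function $s_\gamma$ introduced in the Preliminaries) and the elementary inequality $W_p \le L(\gamma)$ for a connecting curve, which holds because $\mathcal{P}_p(\mathcal{X})$ is a length (indeed geodesic) space when $\mathcal{X}$ is geodesic.
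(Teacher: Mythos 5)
Your proposal is correct and follows essentially the same argument as the paper, which proves the lemma by the identical two-sided sandwich $W_{p}(\gamma(t_{1}),\gamma(t_{2})) \geq u(\gamma(t_{1}))-u(\gamma(t_{2}))=t_{2}-t_{1}=L(\gamma\mid_{[t_{1},t_{2}]}) \geq W_{p}(\gamma(t_{1}),\gamma(t_{2}))$. The only cosmetic difference is that you spell out the verification of the ray definition (the value $k_{\gamma}=1$ and the regularity condition), which the paper leaves implicit; note also that the inequality $W_{p}\leq L$ needs no length-space structure at all, since in any metric space the length of a curve dominates the distance between its endpoints.
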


\begin{proof}
For $[t_{1},t_{2}] \subset [0,\infty)$, by the 1-Lipschitz property of $u$, we have
$$W_{p}(\gamma(t_{1}),\gamma(t_{2})) \geq u(\gamma(t_{1}))-u(\gamma(t_{2}))=t_{2}-t_{1}=L(\gamma_{\mid_{[t_{1},t_{2}]}}) \geq W_{p}(\gamma(t_{1}),\gamma(t_{2})).$$
\end{proof}

To prove Theorem \ref{Pro111}, first we recall that the following set
\begin{equation*}
\mathcal{D}=\left\{\sum_{i=1}^{m}\lambda_{i}\delta_{x_{i}} \bigg\vert \sum_{i=1}^{m}\lambda_{i}=1, \ \lambda_{i} \geq 0, \ x_{i} \in \mathcal{X} \ \text{for all} \ i=1,\dots, m, \ m \in \mathbb{N} \right\}.
\end{equation*}
is dense in $\mathcal{P}_{p}(\mathcal{X})$.

\begin{proof}[Proof of Theorem \ref{Pro111}]
Let $\omega \in \mathcal{P}_{p}(\mathcal{X})$. By the 1-Lipschitz property of $u$ and the H$\ddot{\text{o}}$lder inequality, we obtain the following estimate:
\begin{equation}
|\hat{u}(\omega)|
\leq \int_{\mathcal{X}}d(x,x_{0})d\omega+|u(x_{0})|
\leq \left(\int_{\mathcal{X}}d^{p}(x,x_{0})d\omega \right)^{\frac{1}{p}}+|u(x_{0})|
\end{equation}
for any $x_{0} \in \mathcal{X}$. Thus, $\hat{u}$ is well-defined. Let $\omega_1, \omega_2 \in \mathcal{P}_p(\mathcal{X}).$
Applying the Kantorovich-Rubinstein formula {\cite[Theorem 5.16]{CV}} and the H$\ddot{\text{o}}$lder inequality, we have
$$\begin{aligned}
\hat{u}(\omega_{1})-\hat{u}(\omega_{2}) &=\int_{\mathcal{X}}u(x)d\omega_{1}-\int_{\mathcal{X}}u(x)d\omega_{2}\\
& \leq \sup_{f \ 1-\text{Lipschitz on} \ \mathcal{X}}  \left\{ \int_{\mathcal{X}}f(x)d\omega_{1}-\int_{\mathcal{X}}f(x)d\omega_{2} \right\}\\
& = W_{1}(\omega_{1},\omega_{2})\\
& \leq W_{p}(\omega_{1},\omega_{2}).
\end{aligned}$$
This implies that $\hat{u}$ is 1-Lipschitz on $\mathcal{P}_{p}(\mathcal{X})$.

Now consider $\omega=\sum_{i=1}^{m}\lambda_{i}\delta_{x_{i}} \in \mathcal{D}$, and let $\gamma^{i}:[0,\infty) \rightarrow \mathcal{X}$ be a unit-speed negative gradient ray of $u$ with $\gamma^{i}({0})=x_{i}$. Let
$$\mu({t})=\sum_{i=1}^{m}\lambda_{i}\delta_{\gamma^{i}({t})} \in \mathcal{P}_{p}(\mathcal{X}),$$
which is a unit-speed curve initiated from $\omega$ in $\mathcal{P}_{p}(\mathcal{X})$.
Given any $t_{1},t_{2} \in [0,\infty)$, it follows that
$$\begin{aligned}
\hat{u}(\mu(t_{1}))-\hat{u}(\mu(t_{2})) &=\int_{\mathcal{X}}u(x)d\mu(t_{1})-\int_{\mathcal{X}}u(x)d\mu(t_{2})\\
&= \sum_{i}^{m}\lambda_{i}u(\gamma^{i}(t_{1}))-\sum_{i}^{m}\lambda_{i}u(\gamma^{i}(t_{2}))\\
&= \sum_{i}^{m}\lambda_{i} \left(u(\gamma^{i}(t_{1}))- u(\gamma^{i}(t_{2}))\right)\\
&=\sum_{i}^{m}\lambda_{i}(t_{2}-t_{1})\\
&=t_{2}-t_{1},
\end{aligned}$$
where the fourth equality follows from the fact that each $\gamma^{i}$ is a unit-speed negative gradient ray of $u$.
By Lemma \ref{Lem4.5}, $\mu(t)$ is a unit-speed negative gradient ray of $\hat{u}$ initiated from $\omega$.

Up to now, we have proven that for each $\omega \in \mathcal{D}$, there exists at least one negative gradient ray of $\hat{u}$ initiated from $\omega$. Next, we will show that this fact holds on the entire space $\mathcal{P}_p(\mathcal{X})$. For any $\omega \in \mathcal{P}_p(\mathcal{X})$, there exists a sequence
\begin{equation*}
\omega_{m}=\sum_{i=1}^{m_k}\lambda_{m,i}\delta_{x_{m,i}} \in \mathcal{D} \ \ \text{such that} \ \ \lim_{m \rightarrow \infty}W_{p}(\omega_{m}, \omega)=0.
\end{equation*}
For each $i=1,\cdots,m_k$ and $m \in \mathbb{N}$, let $\gamma_{m,i}:[0,\infty) \rightarrow \mathcal{X}$ be a unit-speed negative gradient ray of $u$ with $\gamma_{m,i}(0)=x_{m,i}$. Let
\begin{equation*}
\mu_{m}(t)=\sum_{i=1}^{m_k}\lambda_{m,i}\delta_{\gamma_{m,i}(t)},
\end{equation*}
which is exactly a unit-speed negative gradient ray of $\hat{u}$ with $\mu_{m}(0)=\omega_{m}$.
For any $t \in [0, \infty)$, let
$\pi_{m,t}=\sum_{i=1}^{m_k}\lambda_{i}\delta_{(x_{m,i},\gamma_{m,i}(t))} \in \Pi(\omega_{m},\mu_{m}(t)).$
For $R>t$ it follows that
\begin{equation*}
\limsup_{m \rightarrow \infty}\int_{d(x,y) \geq R}d^{p}(x,y)\pi_{m,t}(dx,dy)=t^{p} \limsup_{m \rightarrow \infty}\int_{d(x,y) \geq R}\pi_{m,t}(dx,dy)=0.
\end{equation*}
Thus,
\begin{equation*}
\lim_{R \rightarrow \infty}\limsup_{m \rightarrow \infty}\int_{d(x,y) \geq R}d^{p}(x,y)\pi_{m,t}(dx,dy)=0.
\end{equation*}
According to Lemmas \ref{CON} and \ref{Lem3.7}, $\{\mu_{m}(t)\}$ is relatively compact in $\mathcal{P}_{p}(\mathcal{X})$ for all $t \in [0, \infty)$. By the Ascoli-Arzel$\grave{\text{a}}$ Theorem \ref{A-A}, up to a subsequence, there exists a unit-speed ray $\mu$ initiated from $\omega$ such that
\begin{equation*}
\mu(t)=\lim_{m \rightarrow \infty}\mu_m(t).
\end{equation*}
We have
\begin{equation*}
\hat{u}(\mu(t_{1}))-\hat{u}(\mu(t_{2}))
= \lim_{m \rightarrow \infty}[\hat{u}(\mu_{m}(t_{1}))-\hat{u}(\mu_{m}(t_{2}))]
=t_{2}-t_{1}
=W_p(\mu(t_2),\mu(t_1)).
\end{equation*}
As a result, the theorem is proven.
\end{proof}

The geodesic space $\mathcal{X}$ is called non-branching if any geodesic $\gamma: [a,b] \rightarrow \mathcal{X}$ is uniquely determined by its restriction to a nontrivial time-interval. For $p>1$, $\mathcal{X}$ is non-branching if and only if $\mathcal{P}_p(\mathcal{X})$ is non-branching (see \cite[Corollary 7.32]{CV}). Based on this fact, we have the following proposition.

\begin{Pro}[{\cite[Weak KAM Property]{FA}}]
Assume $\mathcal{X}$ is non-branching. Let $u$ be a strong metric viscosity solution on $\mathcal{P}_p(\mathcal{X}) \ (p>1)$. If $(\gamma(t))_{t \geq 0}$ is a unit-speed negative gradient ray of $u$, then for any $\tau>0$, the subray $(\gamma(t+\tau))_{t \geq 0}$ is the unique unit-speed negative gradient ray of $u$ initiated from $\gamma(\tau)$.
\end{Pro}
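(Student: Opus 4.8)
The plan is to separate the claim into the (easy) existence of a negative gradient ray from $\gamma(\tau)$ and the (substantive) uniqueness, with the latter resting on a gluing construction combined with the non-branching property of $\mathcal{P}_p(\mathcal{X})$. First I would dispose of existence: the tail $t \mapsto \gamma(t+\tau)$ is itself a unit-speed negative gradient ray starting from $\gamma(\tau)$, since restricting the calibration identity $u(\gamma(s_1)) - u(\gamma(s_2)) = W_p(\gamma(s_2),\gamma(s_1)) = s_2 - s_1$ to $s_i = t_i + \tau$ preserves it. So there is always at least one such ray.

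For uniqueness, let $\eta : [0,\infty) \to \mathcal{P}_p(\mathcal{X})$ be an arbitrary unit-speed negative gradient ray with $\eta(0) = \gamma(\tau)$. I would form the concatenation
\[
\tilde\gamma(t) = \begin{cases} \gamma(t), & 0 \le t \le \tau,\\ \eta(t-\tau), & t \ge \tau. \end{cases}
\]
This curve is continuous (the two pieces match at $t = \tau$ because $\eta(0) = \gamma(\tau)$) and unit-speed, since $|\tilde\gamma^{'}|(t) = 1$ for a.e.\ $t$ on each of the two subintervals. A short three-case computation (for $t_1,t_2$ both in $[0,\tau]$, both in $[\tau,\infty)$, and straddling $\tau$), using $\gamma(\tau) = \eta(0)$ to bridge the two regimes, shows that $\tilde\gamma$ satisfies $u(\tilde\gamma(t_1)) - u(\tilde\gamma(t_2)) = t_2 - t_1$ for all $0 \le t_1 \le t_2$. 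By Lemma \ref{Lem4.5}, $\tilde\gamma$ is therefore a unit-speed ray, hence a geodesic on every bounded interval.

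Finally I would invoke non-branching. Since $\mathcal{X}$ is non-branching and $p > 1$, the space $\mathcal{P}_p(\mathcal{X})$ is non-branching, so a geodesic is uniquely determined by its restriction to a nontrivial time-interval. The geodesics $\gamma$ and $\tilde\gamma$ agree on $[0,\tau]$, and $\tau > 0$ makes this interval nontrivial; hence $\gamma \equiv \tilde\gamma$ on $[0,\infty)$. Comparing the two on $[\tau, \infty)$ gives $\eta(s) = \gamma(s + \tau)$ for all $s \ge 0$, which is precisely the claim that $\eta$ coincides with the subray $(\gamma(t+\tau))_{t \ge 0}$.

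The heart of the argument is the gluing step together with the application of Lemma \ref{Lem4.5}: the calibration identity for the concatenated curve is exactly what upgrades it from a mere continuous curve into a genuine geodesic ray, and this is the hypothesis needed to activate the non-branching property. The only point requiring care is checking that $\tilde\gamma$ is unit-speed and calibrated across the junction $t = \tau$; once that is verified, everything else is forced. I note that no local compactness of $\mathcal{P}_p(\mathcal{X})$ is used anywhere, so the argument is insensitive to the usual difficulties of the Wasserstein setting.
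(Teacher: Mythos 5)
Your proof is correct and follows essentially the same route as the paper: concatenate $\gamma|_{[0,\tau]}$ with the candidate ray, verify the calibration identity across the junction, upgrade the concatenation to a ray via Lemma \ref{Lem4.5}, and conclude by the non-branching of $\mathcal{P}_p(\mathcal{X})$. Your write-up merely spells out the three-case check and the existence remark more explicitly than the paper does.
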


\begin{proof}
For $\tau>0$, assume $(\tilde{\gamma}(t))_{t \geq 0}$ is a unit-speed negative gradient ray of $u$ initiated from $\gamma(\tau)$. Let
\begin{equation*}
\gamma^{*}(t)=\left\{
\begin{aligned}
&\gamma(t), \  0 \leq t \leq \tau,\\
&\tilde{\gamma}(t-\tau), \ t \geq \tau.
\end{aligned}
\right.
\end{equation*}
It is easily seen that $u(\gamma^{*}(t_1))-u(\gamma^{*}(t_2))=t_2-t_1$ for all $t_2 \geq t_1 \geq 0$. By Lemma \ref{Lem4.5}, we know that $(\gamma^{*}(t))_{t\geq 0}$ is a ray. Since $\mathcal{P}_p(\mathcal{X})$ is non-branching, the ray $(\gamma^{*}(t))_{t\geq 0}$ coincides with $(\gamma(t))_{t\geq 0}$.
\end{proof}

Busemann functions on $\mathcal{P}_{p}(\mathcal{X})$, introduced in \cite{ZG1}, form a subclass of dl$_{C}$-functions. We will present a representation formula for any strong metric viscosity solutions by the Busemann functions. By Theorem \ref{Pro111}, we know that for any $\omega \in \mathcal{P}_p(\mathcal{X})$ $(p \geq 1)$ there exists at least one ray starting from $\omega$. For convenience, we recall the formal definition of the Busemann function on $\mathcal{P}_{p}(\mathcal{X})$ here.
\begin{defn}\label{Def4.7}
Let $\gamma: [0,\infty)\rightarrow \mathcal{P}_{p}(\mathcal{X})$ be any unit-speed ray starting from $\gamma(0)$, and the function $b_{\gamma}: \ \mathcal{P}_{p}(\mathcal{X}) \rightarrow \mathbb{R}$ defined as
$$b_{\gamma}(\omega):= \lim_{t \rightarrow \infty}[W_{p}(\omega,\gamma(t))-t]$$
is called the Busemann function associated to $\gamma$.
\end{defn}

On a complete, locally compact and non-compact geodesic space, any dl-functions can be represented by the Busemann functions \cite{Liang}. We will generalize the conclusion to the strong metric viscosity solutions on $\mathcal{P}_{p}(\mathcal{X})$ in the following proposition.

\begin{Pro}\label{Pro4.8} Let $u$ be a strong metric viscosity solution on $\mathcal{P}_p(\mathcal{X})$ and denote the set of all unit-speed negative gradient rays of $u$ by $\mathcal{N}(u)$, then
for any $\omega \in \mathcal{P}_{p}(\mathcal{X})$
\begin{equation*}
u(\omega)=\inf_{\gamma \in \mathcal{N}(u)}[u(\gamma(0))+b_{\gamma}(\omega)].
\end{equation*}
\end{Pro}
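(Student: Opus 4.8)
The plan is to prove the two inequalities separately and to observe that the infimum is in fact \emph{attained} by the negative gradient ray emanating from $\omega$ itself.

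First I would establish the inequality $u(\omega) \leq u(\gamma(0)) + b_{\gamma}(\omega)$ for \emph{every} $\gamma \in \mathcal{N}(u)$. Fix such a ray. Since $\gamma$ is a unit-speed negative gradient ray, the defining property in Definition \ref{NGC} together with $W_{p}(\gamma(0),\gamma(t)) = t$ gives $u(\gamma(t)) = u(\gamma(0)) - t$ for all $t \geq 0$. Because $u$ is $1$-Lipschitz (Lemma \ref{local}), for every $t$ we have $u(\omega) - u(\gamma(t)) \leq W_{p}(\omega,\gamma(t))$, hence
\begin{equation*}
u(\omega) \leq u(\gamma(0)) - t + W_{p}(\omega,\gamma(t)).
\end{equation*}
Letting $t \to \infty$ and invoking the definition of $b_{\gamma}$ (Definition \ref{Def4.7}) yields $u(\omega) \leq u(\gamma(0)) + b_{\gamma}(\omega)$. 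Taking the infimum over $\gamma \in \mathcal{N}(u)$ gives one direction.

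For the reverse inequality I would use the hypothesis that $u$ is a \emph{strong} metric viscosity solution: by definition there exists a unit-speed negative gradient ray $\gamma_{\omega} \in \mathcal{N}(u)$ with $\gamma_{\omega}(0) = \omega$. Along its own trajectory $W_{p}(\omega,\gamma_{\omega}(t)) = W_{p}(\gamma_{\omega}(0),\gamma_{\omega}(t)) = t$, so $b_{\gamma_{\omega}}(\omega) = \lim_{t\to\infty}(t - t) = 0$. Consequently $u(\gamma_{\omega}(0)) + b_{\gamma_{\omega}}(\omega) = u(\omega)$, which shows $\inf_{\gamma \in \mathcal{N}(u)}[u(\gamma(0)) + b_{\gamma}(\omega)] \leq u(\omega)$. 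Combining the two inequalities proves the identity and in fact exhibits $\gamma_{\omega}$ as a minimizer.

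The argument involves no serious obstacle; the only points requiring care are the well-definedness of the limit defining $b_{\gamma}$ (which follows from the monotonicity in $t$ of $W_{p}(\omega,\gamma(t)) - t$, a consequence of the triangle inequality and unit speed, together with the lower bound $W_{p}(\omega,\gamma(t)) - t \geq -W_{p}(\omega,\gamma(0))$) and the correct use of the strong-solution hypothesis to guarantee a ray issuing from the prescribed point $\omega$. The key conceptual step is recognizing that the infimum is realized precisely by the ray based at $\omega$, so that the representation formula holds with the infimum attained.
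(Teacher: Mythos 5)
Your proof is correct, and its overall architecture matches the paper's: the same two-inequality decomposition, with the reverse inequality obtained exactly as in the paper by taking the negative gradient ray $\gamma_{\omega}$ issued from $\omega$ (guaranteed by the strong-solution hypothesis) and observing $b_{\gamma_{\omega}}(\omega)=0$, so the infimum is attained. The one genuine difference is in the first inequality: the paper derives $u(\omega)\leq u(\gamma(t))+W_{p}(\omega,\gamma(t))$ by invoking the dl$_{C}$ representation \eqref{DL} from Corollary \ref{DLG} (hence, implicitly, the full equivalence of Theorem \ref{VS}) together with the triangle inequality for distances to the sublevel sets $u^{-1}(-\infty,-n]$, whereas you get the same estimate in one line from the $1$-Lipschitz property of $u$ (Lemma \ref{local}). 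Your route is more elementary and self-contained --- after unwinding, the paper's detour through \eqref{DL} is exactly the Lipschitz bound in disguise --- while the paper's phrasing emphasizes that the Busemann representation is a refinement of the dl$_{C}$ representation by sublevel sets; both then conclude identically using $u(\gamma(t))+t=u(\gamma(0))$ and letting $t\to\infty$. Your remark on the existence of the limit defining $b_{\gamma}$ (monotonicity of $W_{p}(\omega,\gamma(t))-t$ plus the lower bound $-W_{p}(\omega,\gamma(0))$) is a point the paper leaves implicit, and it is a worthwhile addition.
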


\begin{proof}
By equality \eqref{DL},
\begin{equation*}
u(\omega)=\lim_{n \rightarrow \infty}\{W_p(\omega,u^{-1}(-\infty,-n])-n\} \ \text{for all} \ \omega \in \mathcal{P}_p(\mathcal{X}).
\end{equation*}
Hence, for any $\gamma \in \mathcal{N}(u)$ and $t \geq 0$,
\begin{equation}\label{Busemann}
\begin{aligned}
u(\omega)&=\lim_{n \rightarrow \infty}\{W_p(\omega,u^{-1}(-\infty,-n])-n\}\\
& \leq [W_p(\omega,\gamma(t))-t]+\lim_{n \rightarrow \infty}\{W_p(\gamma(t),u^{-1}(-\infty,-n])-n+t\}\\
&=[W_p(\omega,\gamma(t))-t]+u(\gamma(t))+t.
\end{aligned}
\end{equation}
Since $\gamma \in \mathcal{N}(u)$, we have
\begin{equation*}
u(\gamma(t))+t=u(\gamma(0)).
\end{equation*}
Thus the inequality \eqref{Busemann} reads as $u(\omega) \leq [W_p(\omega,\gamma(t))-t]+u(\gamma(0)).$ Sending $t \rightarrow +\infty$, we obtain
\begin{equation}
u(\omega) \leq \inf_{\gamma \in \mathcal{N}(u)}[u(\gamma(0))+b_{\gamma}(\omega)].
\end{equation}
On the other hand, by the property of the strong metric viscosity solution, there exists a unit-speed negative gradient ray $\gamma_{\omega}:[0,\infty) \rightarrow \mathcal{P}_{p}(\mathcal{X})$ starting from $\omega \in \mathcal{P}_{p}(\mathcal{X})$. Thus
\begin{equation*}
u(\omega)=u(\gamma_{\omega}(0))+b_{\gamma_{\omega}}(\omega) \geq  \inf_{\gamma \in \mathcal{N}(u)}[u(\gamma(0))+b_{\gamma}(\omega)].
\end{equation*}
This completes the proof of the proposition.
\end{proof}

\end{document}